%        File: draft-version.tex
%     Created: czw kwi 20 10:00  2017 C
% Last Change: czw kwi 20 10:00  2017 C
%
\documentclass[11pt]{article}
\usepackage{amssymb}
\usepackage{amsmath}
\usepackage{amsthm}
\usepackage[textwidth=16.5cm, textheight=25cm]{geometry}

\usepackage[english]{babel}
\addto\captionsenglish{}
\usepackage[utf8]{inputenc}
\usepackage[T1]{fontenc}
\usepackage{todonotes}
\usepackage{xcolor}

\usepackage[pdfborder={0 0 0}]{hyperref}
\usepackage{setspace}
\setstretch{1.1}
\usepackage{tikz-cd}
\usepackage{adjustbox}
\usepackage[shortlabels]{enumitem}
\usetikzlibrary{calc}
\hypersetup{
  pdfinfo={
    Author={Joachim Jelisiejew},
    Title={Elementary components of Hilbert schemes of~points},
    Keywords={Hilbert scheme of points, Murphy's law, components, small
    tangent space},
  }
}
% ----------------------------------------------------------------
\vfuzz4pt % Don't report over-full v-boxes if over-edge is small
\hfuzz4pt % Don't report over-full h-boxes if over-edge is small
% THEOREMS -------------------------------------------------------

\numberwithin{figure}{section}
\makeatletter
\let\c@equation\c@figure
\makeatother
\numberwithin{equation}{section}
\newtheorem{theorem}{Theorem}[section]
\newtheorem{corollary}[theorem]{Corollary}
\newtheorem{lemma}[theorem]{Lemma}
\newtheorem{question}[theorem]{Question}
\newtheorem{conjecture}[theorem]{Conjecture}

\newtheorem{proposition}[theorem]{Proposition}
\theoremstyle{definition}

\newtheorem{remark}[theorem]{Remark}
\newtheorem{example}[theorem]{Example}
\renewcommand{\leq}{\leqslant}
\renewcommand{\geq}{\geqslant}

\DeclareMathOperator{\im}{im}
\DeclareMathOperator{\Spec}{Spec}

\DeclareMathOperator{\Supp}{Supp}
\DeclareMathOperator{\Tor}{Tor}
\DeclareMathOperator{\Ext}{Ext}
\DeclareMathOperator{\Hom}{Hom}

\DeclareMathOperator{\OHilb}{Hilb}

\DeclareMathOperator{\Gr}{Gr}
\DeclareMathOperator{\gr}{gr}
\DeclareMathOperator{\charr}{char}

\DeclareMathOperator{\Sym}{Sym}
\DeclareMathOperator{\Set}{Set}
\DeclareMathOperator{\Sch}{Sch}
\DeclareMathOperator{\id}{id}
\newcommand{\tensor}{\otimes}
\newcommand{\onto}{\twoheadrightarrow}
\newcommand{\into}{\hookrightarrow}

\newcommand{\pts}{\mathrm{pt}}%

\newcommand{\Hlong}[2]{\OHilb_{#1}(#2)}

\begin{document}

\newcommand{\kk}{\Bbbk}%
\newcommand{\kkbar}{\overline{\kk}}%
\newcommand{\OO}{\mathcal{O}}%
\newcommand{\II}{\mathcal{I}}%
\newcommand{\JJ}{\mathcal{J}}%
\newcommand{\DR}{R}%
\newcommand{\DM}{M}%
\newcommand{\OR}{\OO_R}%
\newcommand{\OM}{\OO_M}%
\newcommand{\IR}{I_R}%
\newcommand{\IRhom}{I_R^{h}}%
\newcommand{\ORhom}{\OO_R^{h}}%
\newcommand{\IRhomzero}{I_R^{\mathrm{hom}}}%
\newcommand{\ORhomzero}{\OO_R^{\mathrm{hom}}}%
\newcommand{\IM}{I_M}%
\newcommand{\Ires}{J}%
\newcommand{\DS}{S}%
\newcommand{\uppleftarrow}{\psi}%
\newcommand{\upparrow}{\partial}%
\newcommand{\righttarrow}{\phi}%
\newcommand{\compo}{\mathcal{Z}}%
\newcommand{\Gmult}{\mathbb{G}_{m}}%
\newcommand{\Gadd}{\mathbb{G}_{a}}%
\newcommand{\Gaddn}{\mathbb{G}_{a}^n}%
\newcommand{\Gmultbar}{\overline{\mathbb{G}}_{m}}%
\newcommand{\ambient}{\mathbb{A}}%
\newcommand{\OB}{\OO_{B}}%
\newcommand{\thetazero}{\theta_0}%
\newcommand{\famil}{\mathcal{Z}}%

\newcommand{\BBname}{Bia{\l{}}ynicki-Birula}%
\newcommand{\SchOp}{\Sch^{\mathrm{op}}}%
\newcommand{\spann}[1]{\operatorname{span}\left( #1 \right)}%

\title{\vspace{-1.5\baselineskip}Elementary components of
Hilbert schemes of~points\vspace{-0.3\baselineskip}}
\author{Joachim Jelisiejew\thanks{Institute of Mathematics, Polish Academy of
    Sciences and Institute of Mathematics,  University of Warsaw. Email:
    \url{jjelisiejew@impan.pl}. Partially supported by NCN grant 
2013/10/E/ST1/00688.}}
\date{}
\maketitle

\vspace*{-1.8\baselineskip}
\begin{abstract}
    Consider the Hilbert scheme of points on a higher-dimensional affine
    space. Its component is \emph{elementary} if it parameterizes irreducible
    subschemes. We characterize reduced elementary components in terms of
    tangent spaces and provide a computationally efficient way of finding
    such components. As an example, we find an infinite family of elementary and
    generically smooth components on the affine four-space.
    We analyse singularities and formulate a conjecture which would imply
    the non-reducedness of the Hilbert scheme.
    Our main tool is a generalization of the \BBname{} decomposition for this
    singular scheme.
\end{abstract}
{\small \textbf{MSC classes:} 14C05, 14L30, 13D10}

\vspace*{-\baselineskip}
\section{Introduction}

    While the Hilbert scheme of points on a smooth connected surface is smooth
    and irreducible~\cite{fogarty},
    little is known about the irreducible components of Hilbert schemes of points on
    higher-dimensional varieties~\cite{aimpl}, despite much recent interest in their
    geometry.  Following~\cite{iarrobino__number_of_sings}, a component is
    \emph{elementary} if it parameterizes subschemes supported at a single
    point. All components are generically \'etale-locally products of
    elementary ones.

    Up to now, the only known method of finding elementary components is to
    \emph{construct} a locus $\mathcal{L}$ inside the Hilbert scheme, and
    \emph{verify} that the tangent space to the Hilbert scheme at a point of
    $\mathcal{L}$ has dimension $\dim \mathcal{L}$, to conclude that
    $\mathcal{L}$ contains an open neighbourhood of this point.
    Roughly speaking, the construction step is conceptual and the
    verification step is algorithmic.
    Iarrobino~\cite{iarrobino__number_of_sings, iaCompressed,
    iarrobino_kanev_book_Gorenstein_algebras}
    obtained~loci $\mathcal{L}$ by choosing general forms of prescribed degrees and taking
    their apolar algebra.  The obtained algebras are called \emph{compressed}.
    Recently, Huibregtse~\cite{HuibregtseElementary} extended Iarrobino's
    method in certain cases, taking into account the automorphisms of the
    ambient variety.
    All known elementary components come from the constructions of these two
    authors. Verification
    of tangent space dimension was done in~\cite{emsalem_iarrobino_small_tangent_space} for an explicit degree $8$
    scheme, then extended \cite{Shafarevich_Deformations_of_1de} for
    algebras of multiplicity two, and afterwards conducted for several other
    explicit cases \cite{iarrobino_kanev_book_Gorenstein_algebras,
    erman_velasco_syzygetic_smoothability, HuibregtseElementary}, see
    Remark~\ref{ref:previouswork:rmk}.  It is conjectured that the tangent
    space has correct dimension in greater number of compressed cases,
    see~\cite[\S2.3]{emsalem_iarrobino_small_tangent_space}. The main
    limitation of this approach is that one needs to construct the
    locus $\mathcal{L}$. As a side effect, all loci $\mathcal{L}$ obtained so far
    are isomorphic to open subsets of products of Grassmannians.

    \newcommand{\HshortN}{\Hlong{\pts}{\mathbb{A}^n}}%
    \newcommand{\HplusN}{\OHilb_{\pts}^{+}(\mathbb{A}^n)}%
    \newcommand{\HshortNGmult}{\OHilb_{\pts}^{\Gmult}(\mathbb{A}^n)}%
    \newcommand{\HshortNAmb}{\Hlong{\pts}{\ambient}}%
    \newcommand{\HshortNAmbGmult}{\OHilb_{\pts}^{\Gmult}(\ambient)}%
    \newcommand{\HplusNAmb}{\OHilb_{\pts}^{+}(\ambient)}%
    The aim of the present paper is to avoid the construction step entirely. We answer the following question:
    \begin{question}\label{question:givenpointOnElementary}
        How to check that a given point $[\DR]\in \HshortN$ lies on an elementary
        component?
    \end{question}

    The $n$-dimensional additive group acts on $\mathbb{A}^n$ and $\HshortN$
    by translations. Let $\DR \subset \mathbb{A}^n$ be a finite subscheme
    supported at the origin.
    The tangent map of the $\mathbb{A}^n$-orbit of $[\DR]$ is
    \[
        \spann{\partial_1, \ldots ,\partial_n}\to \Hom(\IR, \OR)_{<0}.
    \]
    We say that $\DR$ has \emph{trivial negative tangents} if this tangent map is surjective or, equivalently, $T^1(\DR)_{<0} =
    0$. If the characteristic is zero or $\IR$ is homogeneous, then $\DR$ has
    trivial negative tangents if and only if $\dim \Hom(\IR, \OR)_{<0} = n$.
    Our main result is that having trivial negative tangents is intimately connected to lying on an elementary
    component.
    \goodbreak
    \begin{theorem}[Theorems~\ref{ref:thetaetale:thm} and~\ref{ref:generictrivialtangents:thm}]\label{ref:TNTrelevance:intro}
        Let $\DR \subset \mathbb{A}^n$ be a finite subscheme supported at the origin.
        \begin{enumerate}[(1)]
            \item\label{it:TNTimpliesElementary} If $\DR \subset \mathbb{A}^n$ has trivial negative tangents, then every
                component of $\HshortN$ containing $[\DR]$ is elementary. The
                subscheme $\DR$ is not
                smoothable or cleavable unless $\DR$ is a point.
            \item\label{it:ElementaryImpliesTNT} Conversely, let $\compo
                \subset \HshortN$ be an elementary irreducible
                component. When the characteristic is zero and $\compo$ is generically reduced,
                a general point of $\compo$ has trivial negative tangents.
        \end{enumerate}
    \end{theorem}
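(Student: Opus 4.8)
The two parts come, respectively, from Theorems~\ref{ref:thetaetale:thm} and~\ref{ref:generictrivialtangents:thm}; here is the strategy behind each. Write $H=\HshortN$, let $H^{\pts}\subset H$ be the locus of subschemes supported at a single point — a \emph{closed} subscheme, being the preimage under the Hilbert--Chow morphism of the small diagonal in the symmetric power — and let $H_0\subset H^{\pts}$ be the sublocus of subschemes supported at the origin, so that translation identifies $H^{\pts}\cong\mathbb{A}^n\times H_0$. The common tool is the $\Gmult$-action on $\mathbb{A}^n$ scaling the coordinates with positive weights and the induced action on $H$. Since $\DR$ is supported at the origin, $\lim_{t\to0}t\cdot[\DR]$ exists and equals $[\DR_0]$ for a homogeneous $\DR_0$ (cut out by the leading-form ideal $\IRhomzero$), so $[\DR]$ lies in the \BBname{} plus-scheme $\HplusN$, which the paper equips with a scheme structure — this is the generalized \BBname{} decomposition. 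Translations preserve $\HplusN$, because $\lim_{t\to0}t\cdot(\DR+v)=\DR_0$ for every $v$, so $\partial_1,\ldots,\partial_n$ are tangent to $\HplusN$; and the grading defining $\Hom(\IR,\OR)_{<0}$ is set up compatibly with passage to the homogeneous limit, so that $\dim T^1(\DR)\leq\dim T^1(\DR_0)$ and, at a point where $H_0$ is smooth, the nonnegative part of $T^1(\DR)$ equals $T_{[\DR]}H_0$ with complement of dimension $\dim\Hom(\IR,\OR)_{<0}$.

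For part~(1) the aim is to show that a natural translation morphism $\theta\colon\mathbb{A}^n\times H_0\to H$ — equivalently, the locally closed immersion $\HplusN\into H$ supplied by the \BBname{} results — is \emph{\'etale} at $[\DR]$; this is where the trivial-negative-tangents hypothesis enters. The plan is to use $\Gmult$-equivariance to degenerate the question toward the fixed point $[\DR_0]$ and then run a graded obstruction computation, showing that the cokernel of the tangent map of $\theta$ together with the relevant obstruction group is concentrated in negative degrees, hence annihilated by the hypothesis $T^1(\DR)_{<0}=0$. Once $\theta$ is \'etale at $[\DR]$, an \'etale neighbourhood of $[\DR]$ in $H$ parametrizes only subschemes supported at one point; consequently every component of $H$ through $[\DR]$ has its general member supported at a single point, i.e.\ is elementary, and $[\DR]$ is a limit of neither reducible-support nor reduced subschemes, so $\DR$ is neither cleavable nor smoothable unless $\DR$ is a point.

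For part~(2), let $\compo$ be an elementary component, generically reduced, in characteristic zero. Its general member is supported at one point, so, translating, I choose a general $[\DR]\in\compo$ with $\DR$ supported at the origin; then $[\DR]$ lies on a component $\compo_0$ of $H_0$ with $\dim\compo=n+\dim\compo_0$. By generic smoothness — here characteristic zero is genuinely used — a general such $[\DR]$ is a smooth point of $\compo$ lying on no other component, so $\dim T_{[\DR]}H=\dim\compo=n+\dim\compo_0$. On the other hand $T_{[\DR]}H\supseteq T_{[\DR]}H_0\oplus\spann{\partial_1,\ldots,\partial_n}$, the sum being direct because a nonzero translation moves the support off the origin, and $\dim T_{[\DR]}H_0\geq\dim_{[\DR]}H_0\geq\dim\compo_0$, so $\dim T_{[\DR]}H\geq n+\dim\compo_0$. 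Hence equality holds throughout: $[\DR]$ is a smooth point of $H_0$, so $T_{[\DR]}H_0$ is the nonnegative part of $T^1(\DR)$, and therefore $\dim\Hom(\IR,\OR)_{<0}=\dim T_{[\DR]}H-\dim T_{[\DR]}H_0=n$, which over a field of characteristic zero is equivalent to $\DR$ having trivial negative tangents.

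The main obstacle is the singular \BBname{} machinery underlying the first paragraph: attaching a scheme structure to $\HplusN$, making sense of the grading on the tangent space of a \emph{non-homogeneous} $\DR$, establishing the semicontinuity and limit comparisons between $[\DR]$ and $[\DR_0]$, and — for part~(1) — carrying out the degeneration-plus-graded-obstruction argument for \'etaleness of $\theta$; everything after that is manipulation of tangent spaces and dimension counts. A secondary point in part~(2) is the bookkeeping needed to ensure that a general point of $\compo$ is simultaneously smooth on $\compo$ and on $H_0$ and lies on a single component of each, and the fact that the conclusion genuinely fails in positive characteristic, so char~$0$ cannot be removed.
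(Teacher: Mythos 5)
Your architecture is the paper's own: part~(1) is proved by showing that the forget-the-limit-and-translate map $\theta\colon\HplusN\times\mathbb{A}^n\to\HshortN$ is \'etale at $([\DR],0)$ via an obstruction-theoretic lifting argument (Theorem~\ref{ref:thetaetale:thm}), and part~(2) by essentially the dimension count you describe (Theorem~\ref{ref:generictrivialtangents:thm}). But two points need repair. First, you conflate the closed subset $H_0\subset\HshortN$ of subschemes supported at the origin with the \BBname{} scheme $\HplusN$. They agree on $\kk$-points (Proposition~\ref{ref:supportfixed:prop}) but not as schemes: the source of $\theta$ must carry the scheme structure inherited from the multigraded Hilbert scheme (Proposition~\ref{ref:openembedding:prop}), since only for that structure is the tangent space $\Hom(\IR,\OR)_{\geq 0}$ and the obstruction space $T^2(\DR)_{\geq 0}\into T^2(\DR)$ (Theorem~\ref{ref:positivetangentobstruction:thm}) --- the two facts that make the lifting argument run. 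Moreover $\HplusN\to\HshortN$ is a priori only a monomorphism, not a ``locally closed immersion supplied by the \BBname{} results''; that it becomes an open immersion (after crossing with $\mathbb{A}^n$) near $[\DR]$ is the \emph{conclusion} of part~(1), not an input. You also need Lemma~\ref{ref:universallyinjective:lem} (universal injectivity) to upgrade formal smoothness of $\theta$ to \'etaleness, and the ``degenerate toward the fixed point $[\DR_0]$'' step is neither needed nor how the argument goes: the obstruction computation is done directly at $[\DR]$ via the homogenization over $\kk[t^{-1}]$.

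The genuine gap is in part~(2): you justify the directness of the sum $T_{[\DR]}H_0\oplus\spann{\partial_1,\ldots,\partial_n}$ --- equivalently the injectivity of $d\theta_{[\DR]}$ --- by saying that ``a nonzero translation moves the support off the origin''. That is a statement about the action on points, not about the tangent map, and it cannot suffice: in characteristic $p$ translation still moves the support, yet $\partial_1$ annihilates $(x_1^p,x_2,\ldots,x_n)$, so the corresponding tangent directions are \emph{not} independent of $\Hom(\IR,\OR)_{\geq 0}$. The correct argument, and the place where characteristic zero genuinely enters, is the one in the proof of Corollary~\ref{ref:independence:cor}: if $\partial_1$ lay in $\Hom(\IR,\OR)_{\geq 0}$, then taking the least $k$ with $(\IR)_{\geq k}=\DS_{\geq k}$ and applying $\partial_1$ would force $\DS_{\geq k-1}\subset\IR$, a contradiction (this uses division by integers). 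Attributing the characteristic-zero hypothesis only to generic smoothness misses this essential use; and your closing claim that the statement ``genuinely fails in positive characteristic'' is not established --- the paper expects the hypothesis to be removable.
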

    Theorem~\ref{ref:TNTrelevance:intro} gives an answer to
    Question~\ref{question:givenpointOnElementary} in characteristic zero, modulo generically non-reduced
    components. No examples of such components are known. Moreover, the
    answer depends only on the information contained in the tangent
    space, which a priori would seem insufficient. For instance, in a similar context
    Erman and Velasco~\cite[p.1144]{erman_velasco_syzygetic_smoothability}
    state that the ``tangent space dimension is rather coarse invariant in the
    study of smoothability''. If a component
    $\compo$ has a smooth point, then it is automatically generically
    reduced.
    Using Theorem~\ref{ref:TNTrelevance:intro}\ref{it:TNTimpliesElementary},
    we can certify that a given point $[\DR]$ lies on an elementary component
    without any knowledge of this component. This is an advantage over the
    above construct-and-verify method. This theorem is well
    suited for the search for new elementary components: we can search
    for $\DR$ having trivial negative tangents, a far more tractable property.

    Better yet, deformation theory provides sufficient conditions for smoothness
    of a point $[\DR]$ of $\HshortN$.
    Classically, the point $[\DR]\in\HshortN$ has an
    obstruction space given by the Schlessinger's functor
    $T^2(\DR) \subset \Ext^1(\IR, \OR)$. If additionally the finite subscheme $\DR$ has
    trivial negative tangents,
    we can restrict to the non-negative part of $T^2(\DR)$. We obtain the following result.
    \begin{corollary}[Corollary~\ref{ref:elementary:cor}]\label{ref:elementaryintro:cor}
        Let $\DR \subset \mathbb{A}^n$ be a finite subscheme
        supported at the origin. If $\DR$
        has trivial negative tangents and $T^2(\DR)_{\geq 0} = 0$ then
        $[\DR]$ is a smooth point of $\HshortN$ lying on a unique
        elementary component.
    \end{corollary}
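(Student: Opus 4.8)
The plan is to deduce smoothness of $[\DR]$ from the vanishing of an obstruction space, and then read off the remaining assertions from Theorem~\ref{ref:TNTrelevance:intro}. Recall that the Zariski tangent space of $\HshortN$ at $[\DR]$ is $T^1(\DR) = \Hom(\IR,\OR)$ and that $T^2(\DR) \subset \Ext^1(\IR,\OR)$ is an obstruction space for the Hilbert functor, so that the complete local ring of $\HshortN$ at $[\DR]$ is a quotient of $\kk[[\,T^1(\DR)^{\vee}\,]]$ by at most $\dim T^2(\DR)$ equations with vanishing linear terms; in particular $[\DR]$ is smooth whenever $T^2(\DR) = 0$. The first thing I would pin down is the grading on $T^1(\DR)$ and $T^2(\DR)$ coming from the $\Gmult$-action that scales the coordinates of $\mathbb{A}^n$ by positive weights (under which each $\partial_i$ has negative degree), together with the fact that the versal deformation can be chosen compatibly with it, so that the defining equations above are homogeneous.

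The hypothesis that $\DR$ has trivial negative tangents says precisely that $T^1(\DR)_{<0}$ is spanned by $\partial_1,\ldots,\partial_n$, hence is the tangent space at $[\DR]$ to the $\mathbb{A}^n$-orbit of $[\DR]$; since $\DR$ is supported at the origin this orbit has trivial stabiliser and so is a smooth locally closed subscheme isomorphic to $\mathbb{A}^n$. This is where the generalized \BBname{} decomposition enters (Theorem~\ref{ref:thetaetale:thm}): for $\DR$ with trivial negative tangents it produces a map $\theta$, \'etale near $[\DR]$, realising $\HshortN$ \'etale-locally around $[\DR]$ as a product of this smooth $n$-dimensional orbit factor with a ``slice'' $\mathcal{S}$, where $\mathcal{S}$ has tangent space $T^1(\DR)_{\geq 0}$ and admits $T^2(\DR)_{\geq 0}$ as an obstruction space — compatibility of the obstruction theory with the grading is what allows one to discard the negative-degree part of $T^2(\DR)$, which is absorbed by the smooth orbit factor. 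Granting this, the hypothesis $T^2(\DR)_{\geq 0} = 0$ forces $\mathcal{S}$ to be smooth at $[\DR]$, and hence $\HshortN$ is smooth at $[\DR]$.

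Finally, a scheme is unibranch at a smooth point, so $[\DR]$ lies on a unique irreducible component of $\HshortN$; and by Theorem~\ref{ref:TNTrelevance:intro}\ref{it:TNTimpliesElementary} that component is elementary, since $\DR$ has trivial negative tangents. The routine inputs here are the formal smoothness criterion and the fact that smooth points are unibranch. The step I expect to be the main obstacle is the middle one: showing that the generalized \BBname{} decomposition of this singular scheme is sharp enough to identify the obstruction space of the slice $\mathcal{S}$ with exactly the non-negative graded piece $T^2(\DR)_{\geq 0}$, and not with some a priori larger graded subspace of $T^2(\DR)$. That identification is precisely what Theorem~\ref{ref:thetaetale:thm} must supply, and it is the technical heart of the paper.
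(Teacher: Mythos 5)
Your proposal is correct and follows essentially the same route as the paper: the ``slice'' $\mathcal{S}$ you describe is exactly $\HplusN$, whose obstruction theory with tangent space $\Hom(\IR,\OR)_{\geq 0}$ and obstruction space $T^2(\DR)_{\geq 0}$ is supplied by Theorem~\ref{ref:positivetangentobstruction:thm} (not by Theorem~\ref{ref:thetaetale:thm}, which only provides the open immersion $\theta\colon \HplusN\times\mathbb{A}^n\to\HshortN$ near $[\DR]$). With that attribution corrected, your argument --- unobstructedness of $[\DR]\in\HplusN$, transfer of smoothness along the open immersion, and uniqueness plus elementarity from smoothness and trivial negative tangents --- is precisely the paper's proof.
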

    In a number of cases, the vanishing of
    $\Ext^1(\IR, \OR)_{\geq0}$ is forced by the degrees in the Betti table;
    see~Example~\ref{ex:naiveexample}.
%    This is in contrast with $T^2(\DR)$ or
%    $\Ext^1(\IR, \OR)$, which almost never vanish.
    This makes Corollary~\ref{ref:elementaryintro:cor} effective: computer
    algebra experiments show that it is a rich source of smooth points on
    elementary components.  Of
    course, the point $[\DR]$ may be
    smooth even if $T^2(\DR)_{\geq 0}\neq 0$. A subtler relative smoothness
    criterion is given in Corollary~\ref{ref:smoothnessHilb:cor}.
    Applying it, we obtain the following infinite family of
    elementary components of $\Hlong{\pts}{\mathbb{A}^4}$.
    Let $\DS = \kk[x_1, x_2, y_1, y_2]$ and, for all $e\in \mathbb{Z}_+$, take
    a finite subscheme $\DM(e) \subset \mathbb{A}^4$ defined by ideal $(x_1,
    x_2)^e + (y_1, y_2)^e$.
    A form $s$ of degree $2(e-1)$ in $\DS/I_{\DM(e)}$ is uniquely written as
    \[
        s = \sum_{i,j} c_{ij} x_{1}^i x_2^{e-1-i}
        y_1^{j}y_2^{e-1-j},\ \mbox{where}\ c_{ij}\in \kk.
    \]
     We say that $s$ is \emph{general} if the $e \times e$ matrix $[c_{ij}]$
     is invertible. Fix a general form $s$ and let $\DR(e) \subset \DM(e)$ be
     the subscheme cut out by this form. The degree of $\DR(e)$ is $d :=
     \binom{e+1}{2}^2-1$. The subscheme $\DR(e)$ deforms freely in $\DM(e)$
     and so all points $[\DR(e)]$ corresponding to different choices of $s$ lie in
     the same irreducible component of $\Hlong{d}{\mathbb{A}^4}$.
    \begin{theorem}\label{ref:maintheoremexamples:thm}
        For all $e\geq 2$ the point $[\DR(e)]\in \Hlong{d}{\mathbb{A}^4}$ is a
        smooth point on an elementary component $\compo(e)$ of dimension
        $4\deg \DR(e) - (e-1)(e+5) = e^{4} + 2e^{3} - 4e + 1$.
    \end{theorem}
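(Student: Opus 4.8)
We plan to verify, for each $e\geq 2$, the hypotheses of the relative smoothness criterion of Corollary~\ref{ref:smoothnessHilb:cor} for the inclusion $\DR(e)\subset \DM(e)$, and then to combine it with Theorem~\ref{ref:TNTrelevance:intro}(1). The simpler criterion of Corollary~\ref{ref:elementaryintro:cor} will not suffice, since $T^2(\DR(e))_{\geq 0}$ does not vanish, so the passage to the relative version is essential. Throughout we use that $I_{\DM(e)}=(x_1,x_2)^e+(y_1,y_2)^e$ and the cutting form $s$ are homogeneous, whence $I_{\DR(e)}$ is homogeneous and all $\Hom$ and $\Ext$ modules below are internally graded; and that $\OO_{\DM(e)}=\OO_{X(e)}\otimes_{\kk}\OO_{Y(e)}$, where $X(e),Y(e)\subset\mathbb{A}^2$ are the fat points defined by $(x_1,x_2)^e$ and $(y_1,y_2)^e$, each a smooth point of its (smooth, irreducible) Hilbert scheme of points. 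The socle of $\OO_{\DM(e)}$ is exactly its top-degree part, of dimension $e^2$ in degree $2(e-1)$; so for general $s$ the ideal it generates in $\OO_{\DM(e)}$ is one-dimensional, $\deg\DR(e)=\binom{e+1}{2}^2-1=d$, and $I_{\DR(e)}$ is minimally generated by the $2(e+1)$ monomial generators of $I_{\DM(e)}$ together with one lift of $s$. The two short exact sequences
\[
    0\to I_{\DM(e)}\to I_{\DR(e)}\to \kk s\to 0,\qquad 0\to \kk s\to \OO_{\DM(e)}\to \OO_{\DR(e)}\to 0
\]
are our main computational device, used together with the linear free resolution of $(x_1,x_2)^e$ over $\kk[x_1,x_2]$ and the induced K\"unneth resolution of $\OO_{\DM(e)}$, whose Betti numbers are $\bigl(1,\,2(e+1),\,e^2+4e+1,\,2e(e+1),\,e^2\bigr)$.

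First we would prove that $\DR(e)$ has trivial negative tangents, i.e.\ $\dim\Hom(I_{\DR(e)},\OO_{\DR(e)})_{<0}=4$. This is a genuine collapse: already the plane fat point $(x_1,x_2)^e$ carries many negative tangents, so $\DM(e)$ has far more than four, and cutting by a general socle element must annihilate all but the four translations $\partial_1,\dots,\partial_4$. Applying $\Hom(-,\OO_{\DR(e)})$ and $\Hom(I_{\DR(e)},-)$ to the two sequences reduces the negative-degree computation to the newly created socle of $\OO_{\DR(e)}$ in degrees $<2(e-1)$ (that is, $\Hom(\kk s,\OO_{\DR(e)})_{<0}$) and to the image of $\Hom(I_{\DM(e)},\OO_{\DR(e)})_{<0}$, the latter controlled by $\Hom(I_{\DM(e)},\OO_{\DM(e)})_{<0}$ and the connecting map into $\Ext^1(I_{\DM(e)},\kk s)$. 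The hypothesis that $[c_{ij}]$ be invertible enters exactly here: compatibility of a negative-degree homomorphism with $s\in\OO_{\DM(e)}$ becomes a full-rank linear system in the entries $c_{ij}$, whose only solutions are the translations. Once this is in place, Theorem~\ref{ref:TNTrelevance:intro}(1) guarantees that every component of $\Hlong{d}{\mathbb{A}^4}$ through $[\DR(e)]$ is elementary, and that $\DR(e)$ is neither smoothable nor cleavable.

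Next we would check the remaining hypotheses of Corollary~\ref{ref:smoothnessHilb:cor}: that $\DR(e)$ deforms freely inside $\DM(e)$, and that the relevant relative obstruction space vanishes in non-negative degrees. Freeness is clear, since the Hilbert scheme parametrizing length-$d$ quotients of $\OO_{\DM(e)}$ is isomorphic to $\mathbb{P}\bigl(\operatorname{socle}(\OO_{\DM(e)})\bigr)\cong\mathbb{P}^{e^2-1}$, hence smooth of dimension $e^2-1$; this is also consistent with the fact, recorded above, that all the $[\DR(e)]$ for general $s$, together with their $\mathbb{A}^4$-translates, lie on a single component. The relative obstruction vanishing is reduced, via the two exact sequences and a K\"unneth-type argument relating $\DM(e)$ to the plane fat points $X(e), Y(e)$, to the unobstructedness of $X(e)$ and $Y(e)$ in their surface Hilbert schemes together with one further genericity statement for $s$. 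We expect this to be the main obstacle: the vanishing must hold uniformly in $e$, so it cannot be checked by computer, and it is the step that truly combines the product structure with the precise position of the general socle form. Granting it, Corollary~\ref{ref:smoothnessHilb:cor} yields that $[\DR(e)]$ is a smooth point of $\Hlong{d}{\mathbb{A}^4}$; being smooth, it lies on a unique component $\compo(e)$, which is elementary by the previous step.

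It remains to compute $\dim\compo(e)$, which by smoothness equals $\dim\Hom(I_{\DR(e)},\OO_{\DR(e)})$, evaluated degree by degree. The negative part contributes $4$ by trivial negative tangents. For the non-negative part, the long exact sequences obtained by applying $\Hom(-,\OO_{\DR(e)})$ and $\Hom(I_{\DM(e)},-)$ express $\dim\Hom(I_{\DR(e)},\OO_{\DR(e)})_{\geq 0}$ in terms of $\dim\Hom(I_{\DM(e)},\OO_{\DM(e)})_{\geq 0}$, the number $2(e+1)$ of generators of $I_{\DM(e)}$, the dimension of $\operatorname{socle}(\OO_{\DR(e)})$, and the ranks of two connecting maps into $\Ext^1(I_{\DM(e)},\kk s)$ and $\Ext^1(\kk s,\OO_{\DR(e)})$ --- all computable from the K\"unneth resolution of $\OO_{\DM(e)}$, and, for the ranks, from the invertibility of $[c_{ij}]$. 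Assembling the contributions and simplifying gives $\dim\compo(e)=4\deg\DR(e)-(e-1)(e+5)=e^4+2e^3-4e+1$. As a consistency check, $e=2$ recovers the degree-$8$, $25$-dimensional elementary component in $\mathbb{A}^4$ of Emsalem and Iarrobino, matching $e^4+2e^3-4e+1=25$.
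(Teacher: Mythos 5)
Your overall strategy coincides with the paper's: show $[\DM(e)]$ is a smooth point via the product structure and Fogarty's theorem, verify the hypotheses of the relative criterion Corollary~\ref{ref:smoothnessHilb:cor}, and read off the dimension from Formula~\eqref{eq:dimensioncomparisonsnd}. Your consistency checks (the K\"unneth Betti numbers of $\OM$, the $\mathbb{P}^{e^2-1}$ of socle elements, the $e=2$ case giving $25$) are all correct. But the proposal stops exactly where the real work begins, and you say so yourself (``we expect this to be the main obstacle''), so there is a genuine gap rather than a complete argument.

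Concretely, two things are asserted but not proved, and they are the entire mathematical content of the theorem. First, the surjectivity of $\righttarrow_{\geq 0}\colon\Hom(\IM,\OM)_{\geq 0}\to\Hom(\IM,\OR)_{\geq 0}$. This is the only non-automatic hypothesis of Theorem~\ref{ref:smoothness:thm} here, because $\upparrow_{\geq 0}$ is surjective for free: $\Ires$ is concentrated in the top degree of $\OM$, so $\Ext^1(\Ires,\OR)$ lives in negative degrees (Remark~\ref{ex:smallextensions}) --- a simplification your sketch does not exploit. The paper proves the surjectivity of $\righttarrow$ in Proposition~\ref{ref:righttarrowexample:prop} by a bidegree-by-bidegree analysis: the only problematic bidegree is $(-2,e-1)$, where the linear syzygies of $(x_1,x_2)^e$ force relations $x_1\varphi_{i+1}-x_2\varphi_i=\lambda_i s\bmod\IM$, and one kills all $\lambda_i$ by multiplying by suitable monomials and using the shape of a general $s$. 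Your proposed ``K\"unneth-type argument relating $\DM(e)$ to the plane fat points'' does not engage with this; unobstructedness of the plane fat points gives smoothness of $[\DM(e)]$ but says nothing about the map $\righttarrow$. Second, trivial negative tangents. In the paper this is not a separate computation: it falls out of a chase on Diagram~\eqref{eq:diagram} once one also knows $\uppleftarrow$ is surjective (Proposition~\ref{ref:uppleftarrowexample:prop}, which amounts to $\Hom(\IR,\OM)_{-1}=0$ and again uses the invertibility of $[c_{ij}]$ through the syzygies of $(x_1,x_2)^{e-1}$); the chase identifies $\Hom(\IR,\OR)_{<0}$ with $\Ext^1(\kk,\kk)_{<0}$, of dimension $4$, and Corollary~\ref{ref:independence:cor} converts this dimension count into the trivial-negative-tangents property. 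You correctly predict that invertibility of $[c_{ij}]$ enters ``as a full-rank linear system,'' but you never set up or solve that system. Until these two surjectivities are established --- uniformly in $e$, which is exactly why the explicit syzygy manipulations are needed and why no computer check suffices --- the argument remains a plan rather than a proof.
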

    The component $\compo(3)$ gives a negative answer to the following open question, stated
    at~\cite{aimpl}: \emph{Is the Gr\"obner fan a discrete invariant that distinguishes the
        components of $\Hlong{d}{\mathbb{A}^n}$?} Namely,
        we construct a curve $C \subset \Hlong{35}{\mathbb{A}^4}$ such that all subschemes corresponding to points
        of $C$ share the same Gr\"obner fan with respect to the standard torus
        action,
        and a general point of $C$ is a smooth point on $\compo(3)$, while the special point on $C$ lies in the
        intersection of $\compo(3)$ and another component, see Example~\ref{ex:groebnerfans}.

    A central idea of the proof of
    Theorem~\ref{ref:TNTrelevance:intro} is to
    consider a scheme $\HplusN$ that is a generalization of the \BBname{}
    decomposition for the scheme $\HshortN$ with the $\Gmult$-action coming
    from the standard $\Gmult$-action on $\mathbb{A}^n$. Intuitively, the
    scheme
    $\HplusN$ parameterizes families that have a limit at infinity,
    see~Section~\ref{sec:BBdecomposition} for details. In particular:
    \begin{itemize}\setlength{\itemsep}{0pt}
        \item the $\kk$-points of $\HplusN$ correspond to subschemes $\DR \subset
            \mathbb{A}^n$ supported at the origin,
        \item the tangent space to $\HplusN$ at a $\kk$-point $[\DR]$ is $\Hom(\IR,
            \OR)_{\geq 0}$,
        \item deformations of $[\DR]\in\HplusN$ have an obstruction space $T^2(\DR)_{\geq 0}$.
    \end{itemize}
    The scheme $\HplusN$
    comes with a forget-about-the-limit-and-translate map
    \begin{equation}\label{eq:thetadef}
        \theta\colon\HplusN \times \mathbb{A}^n \to \HshortN,
    \end{equation}
    A finite subscheme $\DR$ has trivial negative tangents if and only if the tangent map $d\theta$ is surjective at $([\DR],
    0)$ and in this case $\theta$ is an open immersion onto a neighbourhood of
    $\DR$; see Theorem~\ref{ref:thetaetale:thm}.
    The proof of Theorem~\ref{ref:TNTrelevance:intro}\ref{it:ElementaryImpliesTNT} crucially depends on
    representability of $\HplusN$ by a finite type scheme.

    Apart from finding new elementary components of $\HshortN$ we also investigate its
    singularities. It is widely expected that $\HshortN$ has arbitrary
    bad singularities, as defined in~\cite[p.~570]{Vakil_MurphyLaw}.
    However, absolutely no hard evidence for this exists. The
        following questions remain
    open.
    \begin{question}[\cite{fogarty, CEVV,
        aimpl}]\label{que:nonreducedness}
            Is $\HshortN$ always reduced?
%            Do Hilbert
%            schemes of points satisfy Murphy's Law~\cite{Vakil_MurphyLaw}?
    \end{question}
    In this paper, we do not answer Question~\ref{que:nonreducedness}
    directly, however we reduce it to an algorithmic tangent-space-level
    Conjecture~\ref{ref:guar:conjecture}. We recall some background results.
    Vakil~\cite{Vakil_MurphyLaw} proved the Murphy's Law for all Hilbert schemes
    other than the Hilbert schemes of points. His method depends
    on the information in the projective embedding and does not apply to
    the zero-dimensional case. Using~\cite{Vakil_MurphyLaw}, Erman~\cite{erman_Murphys_law_for_punctual_Hilb} proved that
    the Murphy's Law holds for $\HshortNGmult$. His idea is that
    homogeneous deformations of a high enough truncation of a cone correspond
    to deformations of this cone and the associated projective variety.
    Erman's result does not imply anything about $\HshortN$,
    as he explicitly states
    in~\cite[p.~1278]{erman_Murphys_law_for_punctual_Hilb}.
    The problem is
    that $\HshortNGmult$ is a closed, nowhere dense, subset of $\HshortN$.

    We extend Erman's idea to~$\HplusN$ and use the
    map~\eqref{eq:thetadef} to compare of $\HplusN\times
    \mathbb{A}^n$ and $\HshortN$. Our argument can be made on the infinitesimal
    level, however for clarity we will keep using the geometric language.
    The first step is to prove that $\HplusN$ satisfies Murphy's Law. This is
    done in Theorem~\ref{ref:MurphyForBBstrata:thm} by finding a smooth
    locus of the pass-to-the-limit retraction $\HplusN\to \HshortNGmult$.
    The second step is to compare $\HplusN \times \mathbb{A}^n$ with $\HshortN$
    using the map $\theta$ from~\eqref{eq:thetadef}. By
    Theorem~\ref{ref:TNTrelevance:intro}, if the subscheme $[\DR]$ has trivial negative tangents,
    then $\theta$ is an open embedding near $[\DR]$. It is not clear
    why a chosen point $[\DR]\in \HplusN$ with pathological deformation
    space should have trivial negative tangents.
    Therefore, we put forward a conjectural method of
    modifying any point of $\HshortN$ to a point having trivial negative
    tangents.
    \newcommand{\DT}{T}%
    \begin{conjecture}\label{ref:guar:conjecture}
        Let $\DS$ be a polynomial ring over $\kk$ and $I\subset \DS$
        be an ideal of regularity $r_0$. For all $r \geq r_0+2$, there exists an
        integer
        $t$, a polynomial ring $\DT = \DS[x_1,
        \ldots ,x_t]$, and a linear subspace $L \subset \DT_{r}$ such that the
        finite scheme $\DR$ given by the ideal $I\cdot \DT + L +
        \DT_{\geq r+1}$ has trivial negative tangents.
    \end{conjecture}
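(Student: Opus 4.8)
Since $I$ is homogeneous, the ideal $J := I\cdot\DT + L + \DT_{\geq r+1}$ is homogeneous, so the finite scheme $\DR$ it defines is graded, concentrated in degrees $0,\dots,r$, and supported at the origin. Hence the $\Gaddn$-orbit map at $[\DR]$ is a locally closed immersion, $\dim\Hom(J,\DT/J)_{<0}\geq n$ with $n=\dim\DS+t$, and $\DR$ has trivial negative tangents if and only if $\dim\Hom(J,\DT/J)_{<0}\leq n$. This inequality is the whole content. Using $0\to J\to\DT\to\DT/J\to 0$ one gets $\Hom(J,\DT/J)_{-j}\cong\Ext^1_{\DT}(\DT/J,\DT/J)_{-j}$ for all $j\geq1$, so we are bounding the strictly negative part of the cotangent cohomology of $\DR$ --- that is, the part of the deformation theory of $[\DR]$ not seen by $\HplusN$.

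The plan is to take $t\gg 0$ and $L\subset\DT_r$ a \emph{general} linear subspace of a suitably chosen codimension $s$, so that $\DR$ is as balanced as possible: it coincides with $\DT/I\DT$ in degrees $<r$, while in degree $r$ it is a general $s$-dimensional quotient of $(\DT/I\DT)_r$, with $s$ taken near the value that makes $\DR$ a level algebra of socle degree $r$. The hypothesis $r\geq r_0+2$ enters through regularity: $I$ being $r_0$-regular, $I\DT$ is $r_0$-regular as well and has no minimal generator in degree $\geq r_0+1$, so the minimal generators of $J$ split into three blocks --- (A) the generators of $I$, in degrees $\leq r_0$; (B) a lift of $L$, in degree $r$; (C) the degree-$(r+1)$ monomials not in $\DT_1\cdot((I\DT)_r+L)+(I\DT)_{r+1}$. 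Enlarging $t$ enlarges $\DT_r$, which is exactly what makes a general $L$ effective, and is why $t$ is quantified existentially, just as the ambient dimension is in Iarrobino's compressed-algebra constructions.

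Given a homogeneous $\varphi\colon J\to\DT/J$ of degree $-j<0$, one studies it block by block, tracking the couplings imposed by syzygies between blocks. On block (A): for $j=1$ the images lie in degrees $\leq r_0-1<r-1$, where $\DR$ agrees with $\DT/I\DT$, and flat base change along $\DS\to\DT$ together with $r_0$-regularity should identify the block-(A) contribution with $\Hom_{\DS}(I,\DS/I)_{<0}$, the negative tangents of the (possibly positive-dimensional) cone $\Spec\DS/I$, up to a correction coming from syzygies of $J$ that involve blocks (B) and (C). On block (C) with $j=1$: a degree-$(r+1)$ monomial $m=x_im'$ satisfies $\varphi(m)=x_i\varphi(m')$ whenever $m'\in J_r$, and the Koszul relations $x_jm=x_im''$ force $\varphi$ on this block to be a first-order differential operator, so that $\spann{\partial_1,\dots,\partial_n}$ exhausts it. The decisive remaining input is block (B): one must show that for general $L$ every negative-degree $\varphi$ that is nonzero on the lift of $L$ already lies in $\spann{\partial_1,\dots,\partial_n}$, and at the same time that the block-(A) correction term and all contributions in degrees $-j$ with $j\geq2$ vanish. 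This is equivalent to a maximal-rank statement for a family of generalized catalecticant (multiplication) maps parameterized by the general subspace $L$.

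That maximal-rank statement is the heart of the matter, and I expect it to be the main obstacle: it is exactly why the assertion is stated as a conjecture. Maximal-rank phenomena of this flavour (Fr\"oberg's conjecture, the maximal rank conjecture, Iarrobino's conjectures on tangent spaces of compressed algebras, all cited in the introduction) are open in general, and here the situation is at least as hard, since $\DR$ carries the arbitrary low-degree data of $I$. A plausible route would be: (i) settle the base cases $I=0$ and $\DS/I$ a complete intersection directly, via Macaulay inverse systems; (ii) reduce ``general $L$'' to ``one good $L$'' by upper semicontinuity of $\dim\Hom(J,\DT/J)_{<0}$ in $L$; (iii) build a good $L$ by a monomial or initial-ideal degeneration, exploiting the freedom to enlarge $t$. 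Step (iii) is the delicate one, because trivial negative tangents is not visibly preserved under the evident degenerations of $L$, so the degeneration must be tailored to keep precisely the relevant multiplication maps nondegenerate.
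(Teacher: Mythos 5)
The statement you were asked about is Conjecture~\ref{ref:guar:conjecture}; the paper does not prove it. It is stated explicitly as an open problem: the text records that for $I=0$ it reduces to the Iarrobino--Emsalem expectations on compressed algebras, that Shafarevich settled only the case $I=0$ with $L$ spanned by quadrics, that ``not much more is known'', and that the arXiv version contains only ``observations potentially useful for the proof''. So there is no proof in the paper to compare yours against, and your proposal --- correctly --- does not claim to be one. Your preliminary reductions are sound: since the ideal $J$ is homogeneous, Corollary~\ref{ref:independence:cor} turns the problem into forcing $\dim\Hom(J,\DT/J)_{<0}=n$, and the identification $\Hom(J,\DT/J)_{-j}\cong\Ext^1_{\DT}(\DT/J,\DT/J)_{-j}$ for $j\geq 1$ is correct because $\Hom(\DT,\DT/J)\cong\DT/J$ is non-negatively graded. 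You have also correctly located the open core: a maximal-rank statement for the multiplication maps attached to a general $L\subset\DT_r$ in the presence of the arbitrary low-degree data of $I$. That is the genuine gap, and it is the whole conjecture.

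Two cautions about the sketch itself. First, your treatment of block (C) for $j=1$ does not work as stated: a degree $-1$ homomorphism sends a degree-$(r+1)$ generator into $(\DT/J)_r$, so the Koszul relations $x_j(x_im)=x_i(x_jm)$ land in $(\DT/J)_{r+1}=0$ and impose no condition; the only constraints on that block come from syzygies coupling the degree-$(r+1)$ generators to $J_r=(I\DT)_r+L$, i.e.\ again from the behaviour of $L$, so nothing there is ``exhausted by $\spann{\partial_1,\dots,\partial_n}$'' for free. Second, semicontinuity in step (ii) runs only the unhelpful way for you: it upgrades one good $L$ to a general $L$ of the same codimension (the paper makes exactly this remark after the conjecture), but it cannot produce the first good $L$, which is your step (iii) and remains the entire difficulty.
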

    For $I = 0$, the obtained ideals are compressed and in this case the conjecture was
    formulated already in~\cite{emsalem_iarrobino_small_tangent_space} with
    much more quantitative precision.
    Shafarevich~\cite{Shafarevich_Deformations_of_1de} proved this more
    precise version for $I = 0$ and $L$ spanned by quadrics. Not much more is
    known.
    Since $r \geq r_0+2$, the $\Gmult$-invariant deformations of $I$
    and $\IR$ are smoothly equivalent. By semicontinuity, if
    the conjecture holds for one particular $L\subset \DT_r$, then also for a
    general $L' \subset \DT_{r}$ of the same
    codimension; the particular choice of $L$ brings little information
    apart from ensuring that all deformations of $\DR$ are supported on a
    single point, by Theorem~\ref{ref:TNTrelevance:intro}\ref{it:TNTimpliesElementary}.
    Conjecture~\ref{ref:guar:conjecture} implies that $\HshortN$
    is highly singular, in particular non-reduced, answering
    Question~\ref{que:nonreducedness}, see Section~\ref{sec:singularities}.

    Let us mention another open question.
    It is known \cite{iarrobino_reducibility} that
    $\Hlong{d}{\mathbb{A}^3}$ is reducible for $d\geq 78$. However none of its
    components other than the smoothable one is known. Also the minimal $d$
    such that $\Hlong{d}{\mathbb{A}^3}$ is reducible is not known, except for
    the bound $12\leq d\leq
    78$, see~\cite{aimpl, DJNT}.
    By Theorem~\ref{ref:TNTrelevance:intro}, to prove that $d\leq d_0$ it is
    enough to find a degree $d_0$ subscheme having trivial negative tangents.
    \begin{question}\label{ref:reducibilityCodimThree}
        What is the smallest $d$ such that $\Hlong{d}{\mathbb{A}^3}$ is
        reducible?
    \end{question}
    Similarly, new examples of finite Gorenstein subschemes of $\mathbb{A}^4$
    and $\mathbb{A}^5$ would likely improve the bounds
    given~\cite[\S8.1]{bubu2010} and perhaps allow to extend the smoothability
    results on finite Gorenstein algebras presented in~\cite{cn10,
    cn11}. It would also be very interesting to know how the \BBname{}
    decompositions could be used to improve explicit computations on
    the Hilbert schemes~\cite{Bertone__double_generic,
    lella_roggero_Rational_Components}.

    Our focus on $\mathbb{A}^n$ is without loss of generality as
    Hilbert schemes of points on other smooth varieties have the same components,
    see~\cite[p.4]{artin_deform_of_sings} or~\cite{jabu_jelisiejew_smoothability}.
    The idea of proving smoothness using the \BBname{} decomposition is very
    general; we plan to investigate its applications to other moduli spaces,
    such as Quot schemes.
    We believe that the characteristic zero assumption in
    Theorem~\ref{ref:TNTrelevance:intro}\ref{it:ElementaryImpliesTNT} can be
    removed.
    Upon completion of this work we learned
    that the \BBname{} decomposition for algebraic spaces was constructed
    earlier by Drinfeld~\cite{Drinfeld}, by an entirely different method. His
    method was extended~\cite{jelisiejew_sienkiewicz}
    to actions of groups other than $\Gmult$. While this paper was in
    review, a preprint~\cite{Jelisiejew__Pathologies} appeared, which in
    particular claims to answer Question~\ref{que:nonreducedness}.

    The organization of the paper is linear.
    In Section~\ref{sec:prelims}, we discuss all algebraic
    preliminaries. In
    Section~\ref{sec:BBdecomposition},
    we construct $\HplusN$ and, in Section~\ref{sec:obstructions},
    obstruction theories. Finally, in Section~\ref{sec:singularities}, we
    discuss singularities and Conjecture~\ref{ref:guar:conjecture}. We conclude with
    Section~\ref{sec:examples}, which contains some explicit examples.
    \vspace*{-0.5\baselineskip}

    \subsection*{Acknowledgements}

        I am very grateful to Bernt Ivar Utst{\o}l N{\o}dland and Zach Teitler
        for fruitful discussions and especially to Gary Kennedy for sharing his
        observation that $[\DR_3]$ lies on a small component, which became the
        seed of this paper. I thank J.O.~Kleppe for explaining his results and
        Laudal's approach to deformation theory. I
        also thank P. Achinger, J. Buczy{\'n}ski, D. Erman, M. Grab, M.
        Huibregtse, A. Iarrobino, A. Langer,
        R. M. Mir{\`o}-Roig, \L{}. Sienkiewicz, M. Varbaro and J.
        Wi{\'s}niewski for their helpful remarks.
        I thank the referee for many suggestions which greatly improved the
        style and presentation.
        The first ideas for this article
        appeared while collaborating on~\cite{DJNT}, during Combinatorial
        Algebraic Geometry Semester at Fields Institute.
        Macaulay2~\cite{M2}, Magma~\cite{Magma}, and Singular~\cite{Singular} were used for
        experiments.

        \section{Preliminaries}\label{sec:prelims}

        Throughout the article, $\kk$ is an
        arbitrary field,
        $\DS$ is a finitely generated $\kk$-algebra, $\ambient := \Spec \DS$
        and $\HshortNAmb = \coprod_{d} \Hlong{d}{\ambient}$ is the Hilbert
        scheme of points on $\ambient$.
        For closed subschemes $\DR \subset \DM
        \subset \ambient$ by $\IM \subset \IR \subset
        \DS$ we
        denote the respective ideals and by $\OR = \DS/\IR$, $\OM = \DS/\IM$ the
        corresponding algebras. We define $\Ires = \IR/\IM$ so that there are
        short exact sequences
        \begin{align*}
            0\to \IM \to \IR &\to \Ires \to 0,\\
            0\to \Ires\to \OM &\to \OR\to 0.
        \end{align*}
        The associated long exact sequences form the following
        commutative diagram~\eqref{eq:diagram} of $\DS$-modules with exact rows and columns. Here
        and elsewhere $\Ext := \Ext_{\DS}$ and $\Hom := \Hom_{\DS}$.
        \begin{equation}\label{eq:diagram}
            \begin{tikzcd}
                & 0\arrow[d]& 0\arrow[d] & 0\arrow[d] &\\
                0\arrow[r] & \Hom(\Ires, \Ires) \arrow[r]\arrow[d] & \Hom(\Ires, \OM) \arrow[d]\arrow[r] &
                \Hom(\Ires, \OR) \arrow[d]\arrow[r] & \Ext^1(\Ires, \Ires)
                \arrow[d]\\
                0\arrow[r] & \Hom(\IR, \Ires) \arrow[r]\arrow[d] & \Hom(\IR, \OM) \arrow[d]\arrow[r] &
                \Hom(\IR, \OR) \arrow[d]\arrow[r] & \Ext^1(\IR, \Ires) \arrow[d]\\
                0\arrow[r] & \Hom(\IM, \Ires) \arrow[r]\arrow[d] & \Hom(\IM, \OM)
                \arrow[d, "\uppleftarrow"]\arrow[r, "\righttarrow"] &
                \Hom(\IM, \OR) \arrow[d, "\upparrow"]\arrow[r] & \Ext^1(\IM,
                \Ires)\arrow[d]\\
                & \Ext^1(\Ires, \Ires) \arrow[r] & \Ext^1(\Ires,
                \OM) \arrow[r] & \Ext^1(\Ires, \OR)\arrow[r]
                & \Ext^2(\Ires, \Ires)
            \end{tikzcd}
        \end{equation}

        Now we fix assumptions and conventions regarding the torus action.
        In this paper, we eventually restrict to $\DS = \kk[x_1, \ldots ,x_n]$
        graded by $\deg x_i > 0$, so it might be helpful to have this example
        in mind. For a homogeneous ideals the definitions below are
        well-known, but non-homogeneous ideals and their $\Hom$'s are used
        substantially in the proof of
        Theorem~\ref{ref:TNTrelevance:intro}\ref{it:ElementaryImpliesTNT}.

        \newcommand{\mubar}{\overline{\mu}}%
        We fix an $\mathbb{N}$-grading on $\DS$ and assume that $\DS_0 = \kk$.
        The \emph{origin} of $\DS$ is the distinguished $\kk$-point of $\Spec \DS$ given by the ideal
        $\DS_+ = \bigoplus_{i > 0} \DS_i$.
        Consider the algebraic torus $\Gmult := \Spec \kk[t^{\pm 1}]$ and the
        affine line $\Gmultbar := \Spec
        \kk[t^{-1}]$ with the natural $\Gmult$-action.
        The grading on $\DS$ induces a $\Gmult$-action on $\ambient = \Spec
        \DS$. The origin is the unique $\Gmult$-fixed point.
        For a homogeneous ideal $\IR \subset \DS$, we denote
        \[
            \Hom(\IR, \OR)_i = \{\varphi\in \Hom(\IR,\OR)\ |\
                \varphi\left((\IR)_j\right)
                    \subset (\OR)_{i+j}\mbox{ for all } j\in \mathbb{N}\}.
        \]
        Under the above convention, we have $t\cdot \varphi = t^{-i}\varphi$
        for all $t\in \Gmult(\kk)$ and all $\varphi\in \Hom(\IR, \OR)_{i}$.

        Let $\IR \subset \DS$ be an ideal, not necessarily homogeneous, supported
        at the origin. We now define $\Hom(\IR, \OR)_{\geq 0}$ and
        $\Ext^1(\IR, \OR)_{\geq 0}$. While $\Hom_{\geq 0}$ is easy,
        $\Ext^1_{\geq 0}$ requires some care and a change of perspective.
        For all $k\geq 0$, we define the $\kk$-subspaces  $\DS_{\geq k} :=
        \bigoplus_{i\geq k} \DS_i \subset \DS$, $(\IR)_{\geq k} := \IR
        \cap \DS_{\geq k} \subset \IR$ and $(\OR)_{\geq k} = (\DS_{\geq k} +
        \IR)/\IR \subset \OR$.
        We set
        \begin{equation}\label{eq:positiveHom}
            \Hom(\IR, \OR)_{\geq 0} := \left\{ \varphi\in \Hom(\IR, \OR)\ |\
                \varphi((\IR)_{\geq k}) \subset (\OR)_{\geq k}\mbox{ for all
                }k\in \mathbb{N} \right\}.
        \end{equation}
        If the ideal $\IR$ is homogeneous, then $\Hom(\IR,
        \OR)_{\geq 0} = \bigoplus_{k\geq 0} \Hom(\IR,
        \OR)_{k}$ as expected.
        We also define
        \[
            \Hom(\IR, \OR)_{<0} := \Hom(\IR, \OR)/\Hom(\IR, \OR)_{\geq 0}.
        \]
        and again for homogeneous ideals we have $\Hom(\IR, \OR)_{<0} = \bigoplus_{k< 0} \Hom(\IR,
        \OR)_{k}$ as expected.

        The group $\Ext^1_{\geq 0}$ could be defined using the
        formula \eqref{eq:positiveHom} and constructing a filtered free
        resolution~\cite{Bjork__Filtered}. However, we have not found a
        suitable reference for the independence of the resolutions, so we take
        a different path.
        Consider the ring $\DS[t^{\pm 1}]$. Ideals of $\DS$ correspond
        bijectively to homogeneous ideals of $\DS[t^{\pm 1}]$. Explicitly, for an element $i
        \in \IR$, with $i = \sum i_k$ where $i_k\in \DS_k$, we define $i^{h} :=
        \sum t^{-k}i_k$ and $\IRhomzero := (i^{h}\ |\ i\in \IR)\DS[t^{\pm
        1}]$. The ideal $\IRhomzero \subset \DS[t^{\pm 1}]$ is the homogeneous
        ideal corresponding to $\IR$. Setting
        $\ORhomzero := \DS[t^{\pm 1}]/\IRhomzero$, we have a canonical isomorphism
        \begin{equation}\label{eq:homogenization}
            \Hom(\IR, \OR) \simeq \left(\Hom_{\DS[t^{\pm 1}]}(\IRhomzero,
            \ORhomzero)\right)_0.
        \end{equation}
        Let $\IRhom := \IRhomzero\cap \DS[t^{-1}]$ and $\ORhom :=
        \DS[t^{-1}]/\IRhom \subset \ORhomzero$. On the right side of~\eqref{eq:homogenization}, the
        condition~\eqref{eq:positiveHom} translates into $\varphi(\IRhom)
        \subset \ORhom$ and we obtain canonically
        \begin{equation}\label{eq:nonnegativetangents}
            \Hom(\IR, \OR)_{\geq 0} = \left(\Hom_{\DS[t^{-1}]}(\IRhom,
            \ORhom)\right)_0.
        \end{equation}
        Since $\DR$ is supported at the origin, for large enough $k$ we have
        $\DS_{\geq k} \subset \IR$. Therefore, $\DS_{\geq k}[t^{-1}] \subset
        \IRhom$ so the algebra $\ORhom$ is finite over
        $\kk[t^{-1}]$. The algebra $\ORhom$ is flat over $\kk[t^{-1}]$;
        see~\cite[p.~346]{Eisenbud}.

        Geometrically, the family $\Spec\ORhom \subset
        \ambient \times \Gmultbar\to \Gmultbar$ corresponds to a morphism $\Gmultbar
        \to \HshortNAmb$ that is the closure of the $\Gmult$-orbit of the
        point $[\DR]$.
        The
        tangent space to $[\DR]\in
        \HshortNAmb$ is isomorphic to the space of $\Gmult$-invariant vector fields on the orbit
        $\Gmult [\DR]$.  The space $\Hom(\IR, \OR)_{\geq 0}$ is the space of
        $\Gmult$-invariant vectors fields on $\Gmultbar [\DR]$. It consists of
        the vector fields in $\Hom(\IR, \OR)$ that extend from $\Gmult[\DR]$ to
        $\Gmultbar[\DR]$.
        The advantage of~\eqref{eq:nonnegativetangents}
        over~\eqref{eq:positiveHom} is that it easily extends from $\Hom$
        to $\Ext$. We define
        \[
            \Ext^1(\IR, \OR)_{\geq 0} := \left(\Ext^1_{\DS[t^{-1}]}(\IRhom,
            \ORhom)\right)_0.
        \]
        We will also use Schlessinger's $T^2$ functor, so we
        recall its construction~\cite[\S1.3]{HarDeform}.
        For a finite subscheme $\DR \subset \ambient$ we
        fix a surjection $j\colon F\to \IR$ from a free $\DS$-module $F$. Let
        $G = \ker j$ and $K \subset F$ be the submodule generated by $j(a)b - aj(b)$ for
        all $a,b\in F$. We have $K \subset G$. We define
        $T^2(\DR)$ to be the $\DS$-module $\Hom(G/K, \OR)/\Hom(F, \OR)$.
        As $\Ext^1(\IR, \OR) = \Hom(G, \OR)/\Hom(F, \OR)$, we
        have an inclusion, which is usually strict
        \[
            T^2(\DR) \subset \Ext^1(\IR, \OR).
        \]
        We also need a homogenized version. Let $\IRhomzero \subset
        \DS[t^{\pm 1}]$ and $\IRhom \subset
        \DS[t^{-1}]$ be defined as in~\eqref{eq:homogenization}. By localization we obtain
        $T^{2}(\ORhom)\into
        T^2(\DS[t^{\pm 1}]/\IRhomzero)  \simeq T^2(\OR[t^{\pm 1}])$.
        We define $T^2(\DR)_{\geq 0}$ to be $T^2(\ORhom)_0$.
        As a result we have a canonical inclusion
        \[
             T^2(\DR)_{\geq 0} = T^2(\ORhom)_0 \into
            T^2(\OR[t^{\pm1}])_0 \simeq T^2(\DR).
        \]

        \newcommand{\Verring}[1]{\DS^{(#1)}}%
        \newcommand{\fidx}{m}%
        Following~\cite[Definition~5.15]{MR3426613}, we say that a finite subscheme $\DR \subset
        \mathbb{A}^n$ is \emph{cleavable} if it is a limit of geometrically reducible
        subschemes of $\mathbb{A}^n$.
        Here and elsewhere, \emph{geometrically} means ``after base change from
        $\kk$ to its algebraic closure
        $\kkbar$''. If $\kk$ is algebraically closed, this is vacuous, while
        for non-algebraically closed fields geometric reducibility is
        better behaved than reducibility~\cite[\S3.2.2]{Quin_Liu}.
        Put differently, the subscheme $\DR$ is
        \emph{cleavable} if the point $[\DR]$ lies on a non-elementary component of
        $\HshortN$.

        Finally, we recall the notion of a $\Gmult$-limit.
        Let $\infty := \Gmultbar(\kk) \setminus
        \Gmult(\kk)$.
        For a separated scheme $X$ with a $\Gmult$-action and a $\kk$-point
        $x\in X$, we say that the orbit of $x$ has a \emph{limit at infinity}, if the
        orbit map $\mu\colon\Gmult \ni t\to t\cdot x \in X$ extends to
        $\mubar\colon\Gmultbar \to X$. This extension is unique and we denote
        the point
        $\mubar(\infty)\in X$ by $\lim_{t\to \infty} t\cdot x$. When $X$
        is proper, the limit always exists.

        \section{The \BBname{} decomposition}\label{sec:BBdecomposition}
    \newcommand{\HSturm}{\mathcal{H}S}%
    \newcommand{\HSturmFlag}{\HSturm{}Flag}%
    \newcommand{\familY}{\mathcal{Y}}%
    \newcommand{\ambientext}{\ambient'}%

    The \BBname{} decomposition in its classical
    version~\cite[Theorem~4.3]{BialynickiBirula__decomposition} applies to a smooth
    and proper
    variety $X$ with a $\Gmult$-action. In this setup, the locus
    $X^{\Gmult}$ is also smooth and, for each its component $Y_i \subset
    X^{\Gmult}$, the
    \emph{decomposition} associated to $Y_i$ is a smooth locally closed
    subscheme of $X$
    defined by
    \[
        Y^+_i := \{x\in X \ |\ \lim_{t\to \infty} t\cdot x \mbox{ exists and
        lies in } Y_i\}.
    \]
    The \emph{\BBname{} decomposition} is $Y^+ = \coprod_i Y_i^+$.
    In this section, we generalize the decomposition to
    the case of Hilbert scheme of points
    $\HshortNAmb$, which is singular and non-proper. In contrast
    with~\cite{BialynickiBirula__decomposition}, we are interested also in
    the local theory.
    We define a functor $\HplusNAmb\colon\SchOp\to
    \Set$ by
    \begin{equation}\label{eq:Hilbplus}
        \HplusNAmb(B) := \left\{ \varphi\colon\Gmultbar\times B  \to \HshortNAmb\ |\ \varphi
            \mbox{ is } \Gmult\mbox{-equivariant}\right\}.
    \end{equation}
    Consider $\ambientext := \ambient \times \Gmultbar = \Spec (\DS[t^{-1}])$
    with its induced grading and $\Gmult$-action. We stress that the variable $t^{-1}$ has
    negative degree.
    The multigraded
    Hilbert functor
    $\HSturm\colon\SchOp\to \Set$ given by
    \[
        \HSturm(B) = \{ \famil \subset \ambientext \times B \xrightarrow{\pi} B \ |\ \famil\mbox{ is }
        \Gmult\mbox{-invariant},\ \forall_i \left(\pi_*\OO_{\famil}\right)_i\mbox{ locally
        free of finite~rank}\}
    \]
        is represented by a scheme with quasi-projective connected components~\cite[Theorem~1.1]{Haiman_Sturmfels__multigraded}. We
        denote this scheme by
        $\HSturm$. We have a natural transformation $\iota\colon \HplusNAmb \to \HSturm$. Indeed, by definition
        \[
            \HplusNAmb(B)  \simeq  \left\{ \famil \subset \ambientext \times B\ |\
                \famil \mbox{ is } \Gmult\mbox{-invariant},
                \famil\to \Gmultbar \times B\mbox{ flat, finite,
                }\Gmult\mbox{-equivariant} \right\}.
            \]
        The transformation $\iota$ assigns to an embedded family
        $\famil\to \Gmultbar \times B$ the family $\famil\to B$. Since $\famil \to \Gmultbar \times B$ is finite, flat and
        $\Gmult$-equivariant, the pushforward of $\OO_{\famil}$ is a locally
        free \mbox{$\OB$-module} with finite rank graded pieces.
    \begin{proposition}\label{ref:openembedding:prop}
        The functor $\HplusNAmb$ is represented by an open subscheme of $\HSturm$ and
        $\iota$ is the corresponding open immersion.
        Abusing notation, we use $\HplusNAmb$ to denote both the functor and the
        scheme representing it.
    \end{proposition}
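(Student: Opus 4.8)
The plan is to realize $\HplusNAmb$, via $\iota$, as a subfunctor of $\HSturm$, and then to check that this subfunctor is \emph{open}; since an open subfunctor of a representable functor is itself represented by an open subscheme, and $\iota$ is then automatically the associated open immersion, this is enough.

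\textbf{Step 1: $\iota$ is a monomorphism onto a subfunctor.} By the reformulation displayed just before the statement, a $B$-point of $\HplusNAmb$ is the datum of a single $\Gmult$-invariant closed subscheme $\famil\subseteq\ambientext\times B$ that happens to be flat and finite over $\Gmultbar\times B$, and $\iota$ sends this to the same subscheme, now viewed as a $B$-point of $\HSturm$. One only has to check that it really lands in $\HSturm(B)$: writing $q\colon\famil\to\Gmultbar\times B$, the sheaf $q_*\OO_\famil$ is a $\Gmult$-graded, locally free $\OB[t^{-1}]$-module of finite rank, and since it is finitely generated over $\OB[t^{-1}]$ with $t^{-1}$ a nonzerodivisor of degree $-1$, one checks that each graded piece vanishes in large positive degree, is finitely generated over $\OB$, and is a direct summand of $q_*\OO_\famil$ as an $\OB$-module, hence flat, hence locally free of finite rank. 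Thus the $\HSturm$-condition holds, and the resulting map $\HplusNAmb(B)\to\HSturm(B)$ is injective and compatible with base change: $\HplusNAmb$ is a subfunctor of $\HSturm$.

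\textbf{Step 2: reduction of openness to a fibrewise statement; finiteness.} It remains to show that for every morphism $\beta\colon B\to\HSturm$, corresponding to a family $\famil\subseteq\ambientext\times B$, the fibre product $\HplusNAmb\times_{\HSturm}B$ — a subfunctor of the functor of points of $B$ by Step 1 — is represented by an open subscheme $U\subseteq B$; necessarily $U$ must be the set of $b$ over which $\famil_b\to\Gmultbar$ is flat and finite, and one must verify that this set is open and that a morphism $T\to B$ factors through it precisely when $\famil_T:=\famil\times_B T$ is flat and finite over $\Gmultbar\times T$. We may assume $B$ connected, so that it maps into one quasi-projective component of $\HSturm$ with constant multigraded Hilbert function $h$. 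Then finiteness of $\famil_b\to\Gmultbar$ is the same for all $b$ and is visible on $h$: the morphism is affine, hence finite iff $\OO_{\famil_b}$ is a finite $\kk[t^{-1}]$-module, and if $h(j)=0$ for $j>N$ then the homogeneous ideal of $\famil_b$ in $\DS[t^{-1}]$ contains $\DS_{\geq N+1}$, so $\OO_{\famil_b}$ is a quotient of $(\DS/\DS_{\geq N+1})[t^{-1}]$, a finite $\kk[t^{-1}]$-module because $\DS_0=\kk$ and $\DS$ is finitely generated; the converse is immediate. Hence if $h$ is unbounded then $U=\emptyset$, and otherwise $\famil\to\Gmultbar\times B$ is finite, a property stable under base change.

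\textbf{Step 3: flatness, conclusion, and the main difficulty.} Assuming $\famil\to\Gmultbar\times B$ finite, $\mathcal{M}:=q_*\OO_\famil$ is a finitely generated graded $\OB[t^{-1}]$-module with each $\mathcal{M}_j$ locally free of finite rank over $\OB$, with $\mathcal{M}_j=0$ for $j\gg0$ and with $t^{-1}\colon\mathcal{M}_j\to\mathcal{M}_{j-1}$ an isomorphism for $j\ll0$; flatness of $\mathcal{M}$ over $\OB[t^{-1}]$ is equivalent, by the fibrewise criterion of flatness together with the fact that $\mathcal{M}$ is already flat over $\OB$, to the fibrewise injectivity of the remaining finitely many maps $t^{-1}\colon\mathcal{M}_j\to\mathcal{M}_{j-1}$. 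The locus $U\subseteq B$ where these hold is open, because fibrewise injectivity of a morphism of vector bundles is an open condition, and the fibrewise criterion also shows that $T\to B$ factors through $U$ exactly when $\famil_T$ is flat over $\Gmultbar\times T$. Thus $U$ represents $\HplusNAmb\times_{\HSturm}B$, and $\iota$ is an open immersion. I expect the only genuine subtlety to be in this last step: correctly invoking the fibrewise criterion of flatness so as to extract \emph{both} openness and base-change compatibility of $U$ from the finitely generated graded structure of $q_*\OO_\famil$ — the grading being precisely what lets one pass between flatness over $B$ (supplied by the multigraded condition) and flatness over $\Gmultbar\times B$. One could instead argue more formally: $\iota$ is locally of finite presentation, and it is formally \'etale, because over a square-zero thickening $B\hookrightarrow B'$ finiteness over $\Gmultbar$ of the given family is inherited by any $\HSturm(B')$-extension (finiteness of a separated, finite-type, affine morphism being a topological condition, unchanged by nilpotents) and flatness is inherited by the local criterion; a formally \'etale, locally finitely presented monomorphism is an open immersion.
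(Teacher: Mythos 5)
Your proposal is correct, and its skeleton matches the paper's: both identify $\HplusNAmb(B)$ with embedded $\Gmult$-invariant families that are finite and flat over $\Gmultbar\times B$, and both first use constancy of the multigraded Hilbert function on a connected base to show that the high-degree graded pieces of $p_*\OO_{\famil}$ vanish, so that $p_*\OO_{\famil}$ is a finitely generated graded $\OO_B[t^{-1}]$-module. You diverge at the openness step. The paper lifts a homogeneous basis of $\OO_{\famil_b}$ to a graded map $r\colon F\to p_*\OO_{\famil}$ from a free module, applies Nakayama in the finitely many generator degrees to get surjectivity of $r$ on an open $U\ni b$, and concludes by equality of ranks that $r|_U$ is an isomorphism, so $p|_U$ is finite flat. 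You instead reduce flatness to fibrewise injectivity of the finitely many nontrivial maps $t^{-1}\colon\mathcal{M}_j\to\mathcal{M}_{j-1}$ and note that this is an open condition on maps of vector bundles. Your route makes the base-change statement (that $T\to B$ factors through $U$ exactly when $\famil_T$ is flat) explicit, which the paper leaves implicit; the paper's route produces a local basis outright and never needs a flatness criterion. One caution about your Step 3: the fibrewise criterion of flatness carries local-finite-presentation hypotheses that are not obvious for $\famil\to\Gmultbar\times B$ over an arbitrary test scheme. This is repairable either by checking everything on the universal family over the finite-type scheme $\HSturm$, or better by upgrading ``fibrewise injective'' to ``locally split'' (a map of finite locally free modules injective on all fibres is locally a split injection), which exhibits $\mathcal{M}|_U$ as a free graded $\OO_U[t^{-1}]$-module and yields both flatness and base-change compatibility at once — in effect recovering the paper's conclusion. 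Your closing ``formally \'etale monomorphism'' sketch is dispensable and too quick to stand on its own, in particular the claim that finiteness over $\Gmultbar$ passes to square-zero extensions needs the same graded-ideal argument as the main text.
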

    \begin{proof}
        \def\FreeMod{F}%
        \def\idxset{\Delta}%
        Choose a connected scheme $B$ and an element of $\HSturm(B)$
        corresponding to a $\Gmult$-invariant family $\famil \subset \ambientext \times B$.
        For each point $b\in B$ the family $\famil|_{b}\to b$ corresponds to an
        element of $\HSturm(b)$. Suppose a point $b\in B$ is such that the
        corresponding  element lies in
        $\iota(\HplusNAmb)$. This means that $\famil|_{b} \subset \ambientext
        \times b$ is a finite $\Gmult$-equivariant flat family over $\Gmultbar \times b$ so there
        exists a number $k$ such that the ideal of $\famil|_{b} \subset
        \ambientext \times b$ contains
        $\DS_{\geq k}\tensor_{\kk} \kappa(b)$.

        Denote by $p$ the natural $\Gmult$-equivariant affine map $\famil \to \Gmult \times B$.
        We prove that $p_*\OO_{\famil}$ is a finitely generated $\OO_{\Gmultbar \times
        B}$-module. The algebra $\kk[t^{-1}]$ is negatively-graded and
        $\DS_{\geq k}\tensor_{\kk} \kappa(b)$ is contained in the ideal of
        $\famil|_{b}$ so
        $((p_*\OO_{\famil})_{i})|_{b} = 0$ for all $i\geq k$. The
        $\OO_B$-modules $(p_*\OO_{\famil})_{i}$ are locally free for
        all $i$ and $B$ is connected, so
        $(p_*\OO_{\famil})_{i} = 0$ for all $i\geq k$. The
        ideal of $\famil \subset \ambientext \times B$ contains
        $\DS_{\geq k}\tensor_{\kk} \OO_{B}$, so the algebra $\OO_{\famil}$ is a quotient of
        $(\DS/\DS_{\geq k})\tensor_{\kk} \kk[t^{-1}]\tensor_{\kk} \OO_B$ which
        is a finitely generated $\OO_{B}[t^{-1}]$-module. Thus, the $\OO_{\Gmultbar \times
        B}$-module $p_*\OO_{\famil}$ is finitely generated. Fix a finite set
        of homogeneous generators of this module and let $\idxset\subset \mathbb{Z}$ be the finite
        set of their degrees.

        Pick a finite rank graded free $\OO_{\Gmultbar \times B}$-module $\FreeMod$
        and a graded homomorphism $r:\FreeMod \to p_*\OO_{\famil}$ such that
        $r|_{b}$ is an isomorphism.
        Denote by $r_{i}$ the $i$-th graded piece of $r$.
        The set $\idxset$ is finite and for all $i\in \idxset$ the map $(r_{i})|_{b}$ is an isomorphism, so by
        Nakayama's lemma there exists an open set $U \subset B$ such that
        for all $i\in \idxset$ the map $(r_{i})|_{U}$ is an isomorphism.
        Therefore, the map $r|_{U}$ is surjective. For all $i$ the map
        $(r_i)|_{U}$ is as surjection of locally free $\OO_B$-modules of the same
        finite rank, hence is an isomorphism. As a result, the $\OO_{\Gmult\times B}$-module
        $p_*\OO_{\famil}$ is locally free so the map $p|_{U}$ is finite
        flat so the family $\famil|_{U}\to U$ comes from an element of
        $\iota(\HplusNAmb)$.
    \end{proof}
    \begin{remark}
        Representability of the functor $\HplusNAmb$ follows also
        from~\cite[Proposition~5.3]{jelisiejew_sienkiewicz} as the Hilbert
        scheme has a covering by $\Gmult$-stable affine open
        subschemes~\cite[Chapter~18]{Miller_Sturmfels}. However, the
        embedding $\iota$ is crucial for constructing obstruction
        theories for $\HplusNAmb$ in Section~\ref{sec:obstructions}.
    \end{remark}

    We have a natural transformation $\thetazero\colon \HplusNAmb \to
    \HshortNAmb$,
    given by forgetting about the limit point. More precisely,
    for a $\Gmult$-equivariant family $\varphi\colon \Gmultbar \times B\to
    \HshortNAmb$ corresponding to a $B$-point of
    $\HplusNAmb$, we take $\thetazero(\varphi)\colon B \to \HshortNAmb$ to be the restriction
    of $\varphi$ to $\{1_{\Gmult}\} \times B$. The map $\thetazero$ is a
    \emph{monomorphism} because having $\thetazero(\varphi) =
    \varphi|_{1\times B}$ we uniquely
    recover $\varphi|_{\Gmult \times B}$ and then $\varphi$. In particular,
    the map $\thetazero$ is injective on $\kk$-points.
    By a slight abuse of notation, we identify the $\kk$-points of $\HplusNAmb$
    with their images in $\HshortNAmb$ and we denote by $[\DR]$ both the
    point of $\HshortNAmb$ and the corresponding point of $\HplusNAmb$ if the latter
    exists.

    In the following Proposition~\ref{ref:supportfixed:prop}, we classify the
    $\kk$-points of $\HshortNAmb$ lying in the image of $\thetazero$.
    The answer is very intuitive: since the grading on $\DS$ is non-negative, the action of $\Gmult$ on
    $\ambient$ is divergent, with all points except the origin going to
    infinity. Thus the only points $[\DR]\in \HshortNAmb$ for which the orbit
    has a limit at infinity are those corresponding to $\DR$ supported at the origin.
    \begin{proposition}\label{ref:supportfixed:prop}
        The morphism $\thetazero$ sends $\kk$-points of $\HplusNAmb$ bijectively
        to $\kk$-points of $\HshortNAmb$ corresponding to subschemes supported at
        the origin.
    \end{proposition}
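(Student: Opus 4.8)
The statement has two directions. First I would show that every $\kk$-point of $\HplusNAmb$ maps to a subscheme supported at the origin; this is essentially the content of the divergence of the $\Gmult$-action, and the argument already appears inside the proof of Proposition~\ref{ref:openembedding:prop}. Indeed, if $[\DR]$ comes from a $\kk$-point of $\HplusNAmb$, then the orbit map $\mu\colon\Gmult\to\HshortNAmb$, $t\mapsto t\cdot[\DR]$, extends to $\mubar\colon\Gmultbar\to\HshortNAmb$. The corresponding flat finite family $\famil\to\Gmultbar$ has a fiber over $\infty$ which is $\Gmult$-fixed, hence corresponds to a homogeneous ideal, hence is supported at the origin; since the family is flat and the support of the total space is closed and $\Gmult$-stable, and the only $\Gmult$-fixed point of $\ambient$ is the origin, one concludes the generic fiber — in particular $\DR$ itself — is supported at the origin. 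Concretely: flatness gives $\DS_{\geq k}\subset\IRhom$ for $k\gg0$ from the statement that $\ORhom$ is finite over $\kk[t^{-1}]$, and specializing to $t^{-1}=1$ gives $\DS_{\geq k}\subset\IR$, which is exactly the condition that $\DR$ is supported at the origin.

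For the converse — every subscheme $\DR$ supported at the origin lies in the image — the plan is to exhibit the limit explicitly. Given $\IR\subset\DS$ supported at the origin, form $\IRhomzero\subset\DS[t^{\pm1}]$ and $\IRhom=\IRhomzero\cap\DS[t^{-1}]$ as in the Preliminaries. The key facts, already recorded in the excerpt, are that $\ORhom=\DS[t^{-1}]/\IRhom$ is finite over $\kk[t^{-1}]$ (because $\DS_{\geq k}[t^{-1}]\subset\IRhom$) and flat over $\kk[t^{-1}]$ (citing \cite[p.~346]{Eisenbud}). Therefore $\Spec\ORhom\subset\ambient\times\Gmultbar\to\Gmultbar$ is a finite flat $\Gmult$-equivariant family, i.e. an element of $\HplusNAmb(\kk)$, whose restriction to $1_{\Gmult}$ recovers $[\DR]$. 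Geometrically this is the closure $\mubar\colon\Gmultbar\to\HshortNAmb$ of the $\Gmult$-orbit of $[\DR]$, and $\thetazero$ of this point is $[\DR]$ by construction. Injectivity of $\thetazero$ on $\kk$-points was already noted (it is a monomorphism), so this establishes the bijection.

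The only point requiring a little care — and the step I would treat as the main obstacle — is verifying that $\ORhom$ is genuinely flat over $\kk[t^{-1}]$ when $\IR$ is non-homogeneous, since $\IRhom$ is defined by the somewhat delicate intersection $\IRhomzero\cap\DS[t^{-1}]$ rather than by a set of generators. The clean way to see flatness is that $\kk[t^{-1}]$ is a PID and $\ORhom$ is a finitely generated $\kk[t^{-1}]$-module, so flatness is equivalent to being torsion-free; and $\ORhom\subset\ORhomzero=\ORhom[t]$ shows $t^{-1}$ is a nonzerodivisor on $\ORhom$ (any nonzero element of $\ORhom$ stays nonzero after inverting $t$), which is exactly torsion-freeness over $\kk[t^{-1}]$. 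Once flatness is in hand, everything else is formal: the family is finite and $\Gmult$-equivariant by construction of the homogenization, so it defines the required $\kk$-point of $\HplusNAmb$, completing the proof.
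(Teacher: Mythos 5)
Your proposal is correct, but it takes a genuinely different route from the paper in both directions. For ``$[\DR]$ in the image of $\thetazero$ implies $\DR$ supported at the origin,'' the paper chooses an equivariant embedding $\ambient\subset\mathbb{A}^N$ with all coordinates of positive degree and observes that the orbit map $t\mapsto t\cdot v$ of a point $v\neq 0$ cannot extend to $\Gmultbar$; you instead extract $\DS_{\geq k}\subset J_{\famil}$ from the finiteness of the graded algebra $\OO_{\famil}$ over $\kk[t^{-1}]$ (the same computation as in the proof of Proposition~\ref{ref:openembedding:prop}) and specialize at $t^{-1}=1$ to get $\DS_{\geq k}\subset \IR$. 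This is valid, though note it is finiteness, not flatness, that forces the graded pieces to vanish in large degrees, and the heuristic preceding your ``concretely'' sentence (special fiber $\Gmult$-fixed, total support $\Gmult$-stable, hence all fibers at the origin) is not by itself a proof --- a homogeneous ideal need not be supported at the origin --- so the concrete algebraic version is the one doing the work. For the converse, the paper produces the limit abstractly from properness of the fiber of the Hilbert--Chow morphism over $0$, whereas you exhibit the extension explicitly as the finite flat $\Gmult$-equivariant family $\Spec\ORhom\to\Gmultbar$, proving flatness by torsion-freeness over the PID $\kk[t^{-1}]$ via the inclusion $\ORhom\into\ORhomzero$. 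Your route is self-contained and makes explicit the family that Section~\ref{sec:prelims} already asserts is the orbit closure; the paper's route is shorter given the Hilbert--Chow machinery and sidesteps verifying that the flat extension is the scheme-theoretic closure. Both are complete proofs.
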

    \begin{proof}
        Since $\thetazero$ is a monomorphism, it is injective on $\kk$-points.
        It remains to describe the $\kk$-points in its image.
        Consider a $\kk$-point $[\DR]$ corresponding to a finite subscheme $\DR \subset \ambient$
        of degree $d$. The Hilbert-Chow morphism  $\rho\colon
        \Hlong{d}{\ambient}\to \Sym^{d}
        (\ambient)$ is equivariant. The map $\rho$ is a base change of the
        projective morphism
        $\bar{\rho}\colon \Hlong{d}{\mathbb{P}} \to \Sym^d(\mathbb{P})$, where
        $\mathbb{P}$ is a
        compactification of $\ambient$, so the fibers of $\rho$ are proper.
        If $\DR \subset \ambient$ corresponds to a scheme supported at the
        origin, then $[\DR]$ lies in $\rho^{-1}(0)$, which is proper and
        $\Gmult$-invariant, hence the
        $\Gmult$-orbit of $[\DR]$ extends to $\Gmultbar\to \HplusNAmb$ and the
        point $[\DR]$ is in the image of $\thetazero$.

        Conversely, if $[\DR]$ lies in the image of
        $\thetazero$, then for every point $v\in \Supp \DR$, the map
        $\varphi$ induces an $\Gmult$-equivariant map $\varphi_v\colon
        \Gmultbar\to \ambient$ with $\varphi_v(1) = v$.
        Choose an equivariant
        embedding $\ambient \subset \mathbb{A}^N = \kk[x_1, \ldots ,x_N]$,
        where $\deg x_i > 0$ for all $i$.
        For $v = (v_1, \ldots ,v_N)\in \mathbb{A}^N$ we have
        \[
            t\cdot v = (t^{\deg x_1} v_1, \ldots , t^{\deg x_N}v_N),
        \]
        so that $\varphi_v$ exists only if $v = (0, \ldots , 0)$.
    \end{proof}
    \newcommand{\maxR}{\mathfrak{m}_{\DR}}%
    \newcommand{\grR}{\gr(\OR)}%
    \newcommand{\ORcal}{\OO_{\mathcal{\DR}}}%
    \newcommand{\IRcal}{\II_{\mathcal{\DR}}}%
    \newcommand{\maxRcal}{\mathfrak{m}_{A}}%
    \newcommand{\grRcal}{\gr(\ORcal)}%
    Our strategy is to gain knowledge about $\HshortNAmb$ by an analysis of
    $\HplusNAmb$
    and the map $\thetazero\colon \HplusNAmb \to \HshortNAmb$. This is done using obstruction theories in the next section.

    We conclude this section by providing a purely algebraic description of
    $\HplusNAmb$.
    For a $\kk$-algebra $A$ and a quotient $\ORcal = \DS \tensor A/\IRcal$, we
    have a natural filtration on $\ORcal$, where $(\ORcal)_{\geq i}$ is the
    image of $\DS_{\geq i} \tensor A$. Define $\grRcal = \bigoplus
    (\ORcal)_{\geq i}/(\ORcal)_{\geq i+1}$. The scheme $\HplusNAmb$ represents
    the functor
    \[
        \left(\HplusNAmb\right)(\Spec(A)) = \left\{ \ORcal = \DS\tensor A/\IRcal\ |\
            \ORcal\mbox{ and } \grRcal \mbox{ are finite, $A$-flat, and of
            degree } d\right\}.
    \]
    Flatness of $\grRcal$ is equivalent to preservation of local Hilbert
    function. The infinitesimal version of this construction appeared
    in~\cite[p.~614]{kleppe__smoothness}. Since we will not use this
    description further, we leave the details to the reader.
%    \jjintodo{Think this over}

    \section{Obstruction theories}\label{sec:obstructions}

        \newcommand{\ArtCat}{\mathcal{A}rt\mathcal{L}oc_{\kk}}%
        \newcommand{\Tspace}{T}%
        \newcommand{\Obspace}{Ob}%
        \newcommand{\Artk}{\operatorname{Art}_{\kk}}%
        In this section we construct obstruction theories for Hilbert schemes
        of points
        and their \BBname{} decompositions. We choose a very explicit
        approach and follow the notation of~\cite[Chapter~6]{fantechi}
        and, when speaking about Schlessinger's $T^i$ functors, of~\cite{HarDeform}.

        The slogan is that the obstruction space for the \BBname{} decomposition of
        $\HshortN$ is the non-negative part of the obstruction
        space for $\HshortN$. An important feature is that the non-negative
        part frequently vanishes; in particular this happens
        in the setting of Theorem~\ref{ref:maintheoremexamples:thm}.

        Let $\Artk$ be the category of finite local $\kk$-algebras with
        residue field $\kk$. For an algebra $A\in \Artk$ a \emph{small
        extension} is an algebra $B\in \Artk$ together with a surjective
        homomorphism of algebras $f\colon B\onto A$ such that the ideal $K =
        \ker f$ is annihilated by the maximal ideal of $B$. A small extension
        gives rise to an exact sequence of $B$-modules $0\to K\to B\to A\to
        0$.

        Let $X$ be a $\kk$-scheme and $x\in X$ be a $\kk$-point.
        The \emph{deformation functor} associated to the pair $(X, x)$ is
        given on an algebra $B\in \Artk$ by
        \[
            D_X(B) =
            \left\{ \varphi\colon\Spec B\to X\ |\ \varphi(\Spec \kk) = x \right\},
        \]
        see~\cite[Section~6.1]{fantechi}.
        An \emph{obstruction theory} for $(X, x)$ is a pair of spaces
        $(\Tspace, \Obspace)$ and a collection of maps $ob_X$ such that for every small extension $0 \to K \to
        B\to A\to 0$ we have
        \[
            \begin{tikzcd}
                \Tspace\tensor_{\kk} K \arrow[r]& D_X(B) \arrow[r]& D_X(A) \arrow[r,
                "ob_X"] & \Obspace \tensor_{\kk} K,
            \end{tikzcd}
        \]
        see \cite[Definition~6.1.21]{fantechi}. We call $\Tspace$ and
        $\Obspace$ the \emph{tangent} and \emph{obstruction} space,
        respectively.
        For a morphism $\varphi\colon (X, x) \to (Y, y)$ a \emph{map of
        obstruction theories} from $(\Tspace_X, \Obspace_X)$ to
        $(\Tspace_Y, \Obspace_Y)$ consists of maps $\Tspace_{\varphi}\colon
        \Tspace_X\to \Tspace_Y$ and $\Obspace_{\varphi}\colon \Obspace_X\to
        \Obspace_Y$ such that for every small extension we have a commutative
        diagram
        \[\begin{tikzcd}
                \Tspace_X\tensor_{\kk} K \arrow[d,
                "\Tspace_{\varphi}\tensor \id_K"]\arrow[r]\arrow[d]& D_X(B) \arrow[r]\arrow[d]& D_X(A) \arrow[r,
                "ob_X"]\arrow[d] & \Obspace_X \tensor_{\kk} K\arrow[d,
                "\Obspace_{\varphi}\tensor \id_K"]\\
                \Tspace_Y\tensor_{\kk} K \arrow[r]& D_Y(B) \arrow[r]& D_Y(A) \arrow[r,
                "ob_Y"] & \Obspace_Y \tensor_{\kk} K.
            \end{tikzcd}
        \]
        We begin with recalling a natural obstruction theory of the Hilbert
        scheme of points. It employs Schlessinger's $T^2$ functor, which was
        recalled in Section~\ref{sec:prelims}. This theory appears in
        \cite[Theorem~6.4.5]{fantechi} but with a larger obstruction space.
        \begin{proposition}\label{ref:obstructionstandard:prop}
            The scheme $(\HshortN, [\DR])$ has an obstruction theory
            $(\Hom(\IR, \OR),\ T^2(\DR))$.
        \end{proposition}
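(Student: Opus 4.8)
The plan is to exhibit the classical obstruction theory for the Hilbert scheme of points on $\mathbb{A}^n$ explicitly, using the description of $T^2$ recalled in Section~\ref{sec:prelims}, and then to check the defining property of an obstruction theory directly against a small extension $0 \to K \to B \to A \to 0$. First I would recall the identification of the deformation functor: since $\HshortN$ is a fine moduli space, $D_{\HshortN}(B)$ is the set of ideals $\mathcal{I}_B \subset \DS \tensor_{\kk} B$ with $\DS\tensor B/\mathcal{I}_B$ flat over $B$ and reducing to $\IR$ modulo the maximal ideal of $B$. Equivalently, by flatness over $A$, such a lift over $B$ is the data of a $B$-linear surjection $\DS \tensor B \onto \OO_{\DR}\tensor B$ lifting the given one over $A$, modulo the natural action; concretely, fixing the presentation $j\colon F \to \IR$, $G = \ker j$, $K_{rel} \subset G$ the submodule generated by the Koszul-type relations $j(a)b - aj(b)$, a lift over $B$ is encoded by a homomorphism $G \to \OO_{\DR}\tensor K$ that vanishes on $K_{rel}$, taken modulo $\Hom(F, \OO_{\DR})\tensor K$.

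The heart of the argument is the standard lifting computation. Given $\xi \in D_{\HshortN}(A)$, choose set-theoretically a lift of the surjection $\DS\tensor B \to \OO_{\DR}\tensor A$ to $\DS\tensor B \to \OO_{\DR}\tensor B$, equivalently a lift of $j$ to $\tilde{j}\colon F\tensor B \to \DS\tensor B$; the failure of $\tilde{j}$ to annihilate $G$ and to be multiplicative defines an element $ob_{\HshortN}(\xi) \in \Hom(G/K_{rel}, \OO_{\DR})\tensor_{\kk} K$. One checks that the image of this element in $\Hom(G, \OO_{\DR})\tensor K$ modulo $\Hom(F,\OO_{\DR})\tensor K$ depends only on $\xi$, not on the chosen lift, and that it lands in the submodule $T^2(\DR)\tensor K = \bigl(\Hom(G/K_{rel},\OO_{\DR})/\Hom(F,\OO_{\DR})\bigr)\tensor K$ precisely because the multiplicativity obstruction kills $K_{rel}$ by construction. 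This element vanishes if and only if $\tilde{j}$ can be corrected to a genuine lift, i.e.\ if and only if $\xi$ lifts to $D_{\HshortN}(B)$. When a lift exists, the set of lifts is a torsor under $\Hom(\IR, \OO_{\DR})\tensor_{\kk} K = \Hom(F,\OO_{\DR})$-relations' worth of choices, which is exactly $\Hom(\IR,\OR)\tensor K$ acting by $\tilde{j} \mapsto \tilde{j} + \kappa$ for $\kappa\colon F \to \mathcal{I}_{\DR}\tensor K$ factoring through $\IR$. This gives the exact sequence
\[
    \Hom(\IR,\OR)\tensor_{\kk} K \to D_{\HshortN}(B) \to D_{\HshortN}(A) \xrightarrow{ob_{\HshortN}} T^2(\DR)\tensor_{\kk} K
\]
required in the definition of an obstruction theory.

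The main obstacle is bookkeeping rather than conceptual: one must be careful that the obstruction class is well defined independent of the chosen lift $\tilde{j}$ and independent of the chosen presentation $j\colon F\to \IR$, and that it genuinely lies in the smaller space $T^2(\DR)$ rather than only in $\Ext^1(\IR,\OR) = \Hom(G,\OO_{\DR})/\Hom(F,\OO_{\DR})$ --- this is the improvement over \cite[Theorem~6.4.5]{fantechi} that the proposition claims. The point is that the obstruction cocycle is manufactured from the failure of an algebra homomorphism lift to respect multiplication, so by construction it annihilates the multiplicative relations $K_{rel}$, hence descends to $G/K_{rel}$; this is exactly Schlessinger's original computation of $T^2$ as the obstruction module for deformations of an algebra, specialized here to the relative situation over $B$ where the square-zero ideal is $K = \ker(B \to A)$. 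I would therefore organize the proof as: (i) recall the functor of points and reduce a lift to a lift of $\tilde{j}$; (ii) define the obstruction cocycle and verify independence of choices; (iii) verify it lies in $T^2(\DR)$; (iv) verify the torsor statement for the tangent term. Steps (ii) and (iii) are where all the real content sits, and they are essentially a relative version of Schlessinger's $T^2$ computation; everything else is formal.
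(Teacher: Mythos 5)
Your proposal is correct and follows essentially the same route as the paper: both take the classical $\Ext^1(\IR,\OR)$-valued obstruction of \cite[Theorem~6.4.5]{fantechi} and upgrade it to $T^2(\DR)$ by checking that the obstruction cocycle, computed from lifts $j_a$ of the generators $i_a$ to the ambient ring over $B$, annihilates the Koszul relations $i_ae_b-i_be_a$ because multiplication in $\DS\tensor_\kk B$ is commutative. The paper packages the class as the extension $0\to\OR\tensor_\kk K\to(\ker\beta/\im\alpha)\tensor_A\kk\to\II_{\mathcal{R}}\tensor_A\kk\to0$ and then picks the lifting $\gamma(e_a)=\pi(j_a)$ to verify $\gamma(i_ae_b-i_be_a)=\pi(j_a)\pi(j_b)-\pi(j_b)\pi(j_a)=0$, which is exactly your step (iii) in slightly different notation.
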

        \begin{proof}
            For the tangent space and other details of the construction we
            refer to~\cite[Theorem~6.4.5]{fantechi}, which we follow closely.
            We abbreviate $\DS\tensor_{\kk} (-)$ to $\DS_{(-)}$.
            We begin by constructing the obstructions.
            Fix a small extension $0 \to K \to
            B\to A\to 0$ and a deformation $\mathcal{\DR}$ of $\DR$ over $A$.
            The ring $\OO_{\mathcal{R}}$ is a quotient of $\DS_{A}$ by an
            ideal $\II_{\mathcal{\DR}}$. We form a
            commutative Diagram~\ref{fig:obstructionSingle} with exact rows
            and columns. Its top row comes from applying
            $(-)\tensor_{\kk} K$ to $0\to
            \IR \to \DS \to \OR\to 0$, its bottom row comes from
            the deformation $\mathcal{R}$, and its column comes from applying
            $\DS_{(-)}$ to the small extension above.
            \begin{figure}[h]
                \begin{center}
                        \begin{tikzcd}
                            0 \arrow[r] & \IR \tensor_{\kk} K \arrow[r]\arrow[rd,
                            "\alpha"] & \DS_{K} \arrow[r]\arrow[d]
                            & \OR\tensor_{\kk} K \arrow[r] & 0\\
                            & & \DS_{B} \arrow[d]\arrow[rd, "\beta"] &&\\
                            0 \arrow[r] & \II_{\DR} \arrow[r] & \DS_{A}
                            \arrow[r] & \OO_{\mathcal{\DR}} \arrow[r] & 0
                        \end{tikzcd}
                \end{center}
                \caption{Constructing obstruction}
                \label{fig:obstructionSingle}
            \end{figure}
            The subquotient $\ker \beta/\im \alpha$ is an $\DS_{A}$-module. The
            obstruction class $ob\in \Ext^1_{\DS}(\IR, \OR)$
            is defined as the extension
            \begin{equation}\label{eq:obstructionSingle}
                0 \to \OO_{\DR} \tensor_{\kk} K \to \frac{\ker \beta}{\im
                \alpha} \tensor_{A} \kk \to \II_{\mathcal{R}}
                \tensor_{A} \kk \to 0.
            \end{equation}
            We show that this element lies in $T^2(\DR)$.
            Let $i_1, \ldots , i_r$ be the generators of $\IR$. Fix a rank
            $r$ free $\DS$-module $F$ with basis $e_1, \ldots ,e_r$ and a
            surjection $j\colon F\to I$ given by $j(e_a) = i_a$. Let
            $G= \ker j$. After a choice of lifting $\gamma\colon F\to \frac{\ker \beta}{\im
                \alpha} \tensor_{A} \kk$ we obtain the commutative diagram
                with exact rows
            \[
                \begin{tikzcd}
                    0\arrow[r] & G \arrow[r]\arrow[d] & F \arrow[r, "j"]\arrow[d,
                    "\gamma"] & \IR
                    \arrow[r]\arrow[d, "="] & 0\\
                    0 \arrow[r] & \OO_{\DR} \tensor_{\kk} K \arrow[r] & \frac{\ker \beta}{\im
                    \alpha} \tensor_{A} \kk \arrow[r] & \II_{\mathcal{R}}
                    \tensor_{A} \kk \arrow[r] & 0
                \end{tikzcd}
            \]
            The obstruction $ob$ lies in $T^2(\DR)$ if and only if $\gamma(i_a e_b - i_b e_a)
            =0$ for all $1\leq a\leq r$ and all $1\leq b \leq r$. Consider the
            following commutative diagram of surjections of
            $\DS_{B}$-modules
            \[
                \begin{tikzcd}
                    \ker \beta \arrow[d,
                    twoheadrightarrow, "\pi"]\arrow[r, twoheadrightarrow] &
                    \II_{\mathcal{R}}\arrow[d, twoheadrightarrow, "(-)\tensor_{A} \kk"] \\
                    \frac{\ker \beta}{\im \alpha}\tensor_{A} \kk \arrow[r,
                    twoheadrightarrow] & \IR
                \end{tikzcd}
            \]
            Choose lifts $j_1, \ldots , j_r\in \ker \beta$ of
            $i_1, \ldots , i_r\in \IR$ respectively. For every element $f$ of
            $(\ker \beta/\im \alpha)\tensor_{A} \kk$ we have $\pi(j_a)f =
            i_af$. Choose $\gamma$ so
            that $\gamma(e_a) = \pi(j_a)$ for each $a=1, \ldots ,r$.
            It follows that $\gamma(i_a e_b - i_b e_a) = i_a\gamma(e_b) - i_b\gamma(e_a)
            = \pi(j_a)\pi(j_b) - \pi(j_b)\pi(j_a) = 0$. Hence, we see that $\gamma(K) = 0$,
            so $ob\in T^2(\DR)$, which concludes the proof.
        \end{proof}

        Consider the finite subscheme $\DR \subset \ambient$ supported at the origin. We show that the scheme
        $(\HplusNAmb, [\DR])$ has an obstruction theory with  obstruction
        space $T^2(\DR)_{\geq 0}$. As explained in the introduction, the
        restriction from $T^2(\DR)$ to $T^2(\DR)_{\geq 0}$ is crucial for
        proving smoothness.
        \begin{theorem}\label{ref:positivetangentobstruction:thm}
            The scheme $(\HplusNAmb, [\DR])$ has  an obstruction
            theory with tangent space $\Hom(\IR, \OR)_{\geq 0}$ and
            obstruction space $T^2(\DR)_{\geq 0} \subset \Ext^1(\IR,
            \OR)_{\geq 0}$.
        \end{theorem}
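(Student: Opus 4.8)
The plan is to reduce the statement to deformation theory on the multigraded Hilbert scheme $\HSturm$. By Proposition~\ref{ref:openembedding:prop} the transformation $\iota$ realizes $\HplusNAmb$ as an open subscheme of $\HSturm$, and under this identification the point $[\DR]$ corresponds to the homogeneous ideal $\IRhom \subset R := \DS[t^{-1}]$, which cuts out the finite $\kk[t^{-1}]$-scheme $\Spec\ORhom \subset \ambientext$. Because a morphism $\Spec B \to \HSturm$ out of a local Artinian $\kk$-algebra $B$ whose closed point maps into the open subset $\HplusNAmb$ automatically factors through $\HplusNAmb$, the deformation functor $D_{\HplusNAmb}$ at $[\DR]$ is canonically isomorphic to the deformation functor of $\HSturm$ at $[\Spec\ORhom]$. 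It therefore suffices to equip $\HSturm$, at an arbitrary homogeneous point, with an obstruction theory whose tangent and obstruction spaces are $\Hom_R(\IRhom,\ORhom)_0$ and $T^2(\ORhom)_0$: by~\eqref{eq:nonnegativetangents} and the definitions of Section~\ref{sec:prelims} these coincide with $\Hom(\IR,\OR)_{\geq 0}$ and $T^2(\DR)_{\geq 0}$, and the inclusion $T^2(\DR)_{\geq 0}\subset \Ext^1(\IR,\OR)_{\geq 0}$ is obtained by applying the general inclusion $T^2\subset\Ext^1$ over $R$ and passing to degree~$0$.

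Set $J := \IRhom$, so $R/J = \ORhom$. For $B\in\Artk$, carrying the trivial $\Gmult$-action, a point of $D_{\HSturm}(B)$ lying over $[\Spec\ORhom]$ is a homogeneous ideal $\IRcal\subset R\tensor_\kk B$ (the grading putting $B$ in degree $0$) such that $(R\tensor_\kk B)/\IRcal$ is $B$-flat with free graded pieces and reduces to $J$ modulo $\mathfrak m_B$; since $\DR$ is supported at the origin we have $\DS_{\geq k}\subset\IR$ for some $k$, hence $\DS_{\geq k}[t^{-1}]\tensor_\kk B\subset\IRcal$ by Nakayama, so this is a genuine finitely generated graded deformation of $J$. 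The tangent space of this functor is the space $\Hom_R(J,R/J)_0$ of degree-zero graded homomorphisms, as for any multigraded Hilbert scheme at a homogeneous ideal.

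For the obstruction map I would reproduce the construction in the proof of Proposition~\ref{ref:obstructionstandard:prop}, carried out throughout in the category of graded $R$-modules with degree-zero morphisms. Given a small extension $0\to K\to B\to A\to 0$ with $K$ placed in degree $0$ and a graded deformation $\ORcal$ of $\ORhom$ over $A$, the graded analogue of Diagram~\ref{fig:obstructionSingle} — assembled from $J\tensor_\kk K$, $R\tensor_\kk K$, $\ORhom\tensor_\kk K$, $R\tensor_\kk B$, $R\tensor_\kk A$, $\IRcal$ and $\ORcal$ with degree-zero connecting maps — yields the obstruction class, and since every module and map in sight is graded with the maps of degree $0$, this class lies in $\Ext^1_R(J,R/J)_0\tensor_\kk K = \Ext^1(\IR,\OR)_{\geq 0}\tensor_\kk K$. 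Taking a graded free presentation $G\into F\onto J$ with homogeneous generators $i_1,\dots,i_r$ and choosing the lifting $\gamma$ as in the earlier proof, the vanishing $\gamma(i_a e_b - i_b e_a) = 0$ goes through unchanged, so the class in fact lies in $T^2(\ORhom)_0\tensor_\kk K = T^2(\DR)_{\geq 0}\tensor_\kk K$; the remaining requirements of an obstruction theory — that the class vanishes exactly when $\ORcal$ lifts to $B$, and that $\Hom_R(J,R/J)_0\tensor_\kk K$ acts on the set of such lifts — follow from the same argument as in the non-graded case (cf.~\cite[Theorem~6.4.5]{fantechi}), now internal to graded modules.

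The only point requiring genuine care, and hence the main obstacle, is the bookkeeping in the first two paragraphs rather than any new idea: one must verify that $D_{\HplusNAmb}$ at $[\DR]$ is exactly the functor of graded deformations of $J\subset R$ — this is precisely where the open immersion $\iota$ of Proposition~\ref{ref:openembedding:prop} is indispensable, since $\HSturm$ on its own does not remember finiteness over $\Gmultbar$ — and that the tangent space to the multigraded Hilbert scheme at a homogeneous ideal is the degree-zero graded $\Hom$, so that it matches $\Hom(\IR,\OR)_{\geq 0}$ via~\eqref{eq:nonnegativetangents}. The fact that $\ORhom$ is only finite over $\kk[t^{-1}]$, not finite-dimensional over $\kk$, causes no trouble: the construction is purely homological and all graded pieces that matter in degree $0$ are finite-dimensional. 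Everything else is a routine transcription of Proposition~\ref{ref:obstructionstandard:prop} into the $\Gmult$-equivariant setting.
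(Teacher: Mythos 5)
Your proposal is correct and follows the same route as the paper: the paper's proof likewise uses the open immersion $\iota$ of Proposition~\ref{ref:openembedding:prop} to identify the deformation problem with that of $\HSturm$ at the point given by $\IRhom$, applies the construction of Proposition~\ref{ref:obstructionstandard:prop} in the graded ($\Gmult$-invariant) setting, and identifies the degree-zero pieces with $\Hom(\IR,\OR)_{\geq 0}$ and $T^2(\DR)_{\geq 0}$ via the definitions of Section~\ref{sec:prelims}. Your write-up merely spells out in more detail the bookkeeping that the paper leaves implicit.
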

        \begin{proof}
            The open embedding $\iota$ from
            Proposition~\ref{ref:openembedding:prop} sends $[\DR]$ to a
            family $\famil \subset \ambient \times \Gmultbar$ given by the
            ideal $\IRhom$, see Section~\ref{sec:prelims}. Applying the
            argument of Proposition~\ref{ref:obstructionstandard:prop} to
            $(\HSturm, [\famil])$ and taking into account $\Gmult$-invariance,
            we obtain an obstruction theory
            \[
                \left(\Hom_{\DS[t^{-1}]}(\IRhom, \ORhom)\right)_0,\quad
                (T^2_{\famil})_0.
            \]
            These spaces are isomorphic to $\Hom_{\DS}(\IR, \OR)_{\geq 0}$ and
            $T^2(\DR)_{\geq 0}$ respectively, see Section~\ref{sec:prelims}.
        \end{proof}

        \newcommand{\partialbar}{\overline{\partial}}%
        Below we consider $\ambient$ equal to $\mathbb{A}^n$ with positive
        grading. In algebraic terms, we consider $\ambient = \Spec \DS$ for a
        graded polynomial ring $\DS = \kk[x_1, \ldots
        ,x_n]$ with $\deg(x_i) > 0$ for all $i$.
        Recall from Section~\ref{sec:BBdecomposition} the natural
        forget-about-the-limit map $\thetazero\colon \HplusN\to
        \HshortN$. We form the map
        \[
            \theta\colon\HplusN \times \mathbb{A}^n \to \HshortN,
        \]
        that sends $([\DR], v)$ to the subscheme $[\DR]$ translated by
        the vector $v$.
        The map $\theta$ is the forget-about-the-limit-and-translate map
        defined in the introduction.

        \begin{lemma}\label{ref:universallyinjective:lem}
            The morphism $\theta$ is injective on $K$ points for all fields
            $K \supset \kk$.
        \end{lemma}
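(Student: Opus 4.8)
The plan is to verify injectivity on points directly, combining the characterization of the points of $\HplusNAmb$ from Proposition~\ref{ref:supportfixed:prop} with the fact that $\thetazero$ is a monomorphism. Fix a field extension $K \supset \kk$ and two $K$-points $([\DR_1], v_1)$, $([\DR_2], v_2)$ of $\HplusN \times \mathbb{A}^n$ with the same image under $\theta$; the goal is $v_1 = v_2$ and $[\DR_1] = [\DR_2]$. First I would note that a $K$-point of $\HplusN$ corresponds to a finite subscheme $\DR_i \subset \mathbb{A}^n_K$ supported at the $K$-rational origin: this is Proposition~\ref{ref:supportfixed:prop} read over $K$, whose proof uses only an equivariant embedding and the explicit divergence of the torus action, and is insensitive to the ground field. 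Writing $\DR_i'$ for the subscheme obtained from $\DR_i$ by translation by $v_i$, we then have $\theta([\DR_i], v_i) = [\DR_i']$, and $\DR_i'$ is supported at the $K$-rational point $v_i \in \mathbb{A}^n(K)$.

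Next I would recover the translation vector from the support. From $[\DR_1'] = [\DR_2']$ the underlying closed subsets of $\DR_1'$ and $\DR_2'$ coincide, so $\{v_1\} = \Supp \DR_1' = \Supp \DR_2' = \{v_2\}$; since $v_1$ and $v_2$ both lie in $\mathbb{A}^n(K)$, this forces $v_1 = v_2 =: v$. Applying to the equality $\theta([\DR_1], v) = \theta([\DR_2], v)$ the automorphism of $\HshortN$ induced by translation by $-v$ on $\mathbb{A}^n_K$ gives $\thetazero([\DR_1]) = \thetazero([\DR_2])$ in $\HshortN(K)$. Since $\thetazero$ is a monomorphism it is injective on $K$-points, hence $[\DR_1] = [\DR_2]$ in $\HplusN(K)$, and therefore $([\DR_1], v_1) = ([\DR_2], v_2)$, as claimed.

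I expect the only delicate point to be the first step: over a non-algebraically closed $K$ one must ensure that a $K$-point of $\HplusN$ is supported at the $K$-rational origin, and not merely at some closed point mapping to it, for otherwise the translate $\DR_i'$ would not be supported at the $K$-point $v_i$. This follows from rerunning the proof of Proposition~\ref{ref:supportfixed:prop} over $K$ — or, after base change to $\overline{K}$, from the observation that $t \cdot w = (t^{\deg x_1} w_1, \ldots, t^{\deg x_n} w_n)$ diverges as $t \to \infty$ unless $w = 0$, so every point of the support of a $K$-point of $\HplusN$ must be the origin. Alternatively, one notes that $\thetazero$ factors through the fibre over the origin of the base-changed Hilbert–Chow morphism. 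Everything else is formal.
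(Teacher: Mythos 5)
Your proof is correct and follows essentially the same route as the paper: recover the translation vector as the support of $\theta([\DR],v)$, translate back by $-v$, and invoke the fact that $\thetazero$ is a monomorphism. The only difference is presentational — the paper disposes of the rationality issue you discuss at the end by simply base changing to $K$ at the outset and assuming $K=\kk$, whereas you argue it directly; both are fine.
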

        \begin{proof}
            After base change to $K$, we may assume $K = \kk$.
            Let $([\DR], v)$ be a $\kk$-point of $\HplusN \times
            \mathbb{A}^n$. Then $v$ is the support of $\theta([\DR], v)$.
            From $\theta([\DR], v)$ we recover $\thetazero([\DR])$ as the
            scheme $\theta([\DR], v)$ translated by $-v$. But $\thetazero$ is
            a monomorphism, hence is injective on $\kk$-points. This concludes
            the proof.
        \end{proof}
        \begin{remark}
            The map $\theta$ is a monomorphism of schemes when $\kk$ has
            characteristic zero. It is not a monomorphism for $\charr\kk = p >
            0$, because the action of $\mathbb{A}^n$ on $\HshortN$ is not free;
            for example the stabilizer of $(x_1^p, x_2, \ldots ,x_n)$ is not
            reduced.
        \end{remark}
        The tangent bundle to $\mathbb{A}^n$
        is trivial, spanned by global sections corresponding to partial
        derivatives $\partial_i$. The tangent map $d\theta_{[\DR]}$
        sends $\partial_i$ to a homomorphism $\partialbar_i\colon \IR \to \OR$
        defined
        by the formula $(\partialbar_i)(s) := \partial_i(s) + \IR$.
        We obtain the linear subspace
        \[
            \langle \partialbar_1, \partialbar_2,  \ldots, \partialbar_n \rangle \subset \Hom(\IR, \OR).
        \]
        The image of $d\theta_{[\DR]}$ in $\Hom(\IR, \OR)$ is
        equal to
        $\Hom(\IR, \OR)_{\geq 0} + \langle \partialbar_i\ | 1\leq i\leq n\rangle$.

        Now we prove the key theorems comparing $\HplusN$ and $\HshortN$.
        Following~\cite[Definition~1.15]{MR3426613}, we say that a finite subscheme $\DR
        \subset \mathbb{A}^n$ is \emph{cleavable}, if it is a limit of
        geometrically reducible subschemes. Put differently, the subscheme $\DR$ is
        cleavable if and only if the point $[\DR]$ lies on a non-elementary
        component of $\HshortN$.

        \begin{theorem}\label{ref:thetaetale:thm}
            Let $\DR \subset \mathbb{A}^n$ be a subscheme supported at the
            origin and
            with trivial negative tangents. Then the
            map
            \[
                \theta\colon\HplusN \times \mathbb{A}^n \to \HshortN
            \]
            is an open embedding of a
            local neighbourhood of $([\DR], 0)$ into $\HshortN$. Hence, every
            component of $\HshortN$
            containing $[\DR]$ is elementary. In particular, if $\DR\neq \Spec
            \kk$, then the scheme $\DR$
            is not smoothable or cleavable.
        \end{theorem}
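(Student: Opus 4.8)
The plan is to use the obstruction theories of Section~\ref{sec:obstructions} to show that $\theta$ is smooth at $([\DR],0)$, to upgrade this to an open immersion on a neighbourhood of $([\DR],0)$ using the universal injectivity from Lemma~\ref{ref:universallyinjective:lem}, and finally to read off the statements about components from Proposition~\ref{ref:supportfixed:prop}.

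First I would assemble a map of obstruction theories for $\theta\colon(\HplusN\times\mathbb{A}^n,([\DR],0))\to(\HshortN,[\DR])$. Since $\mathbb{A}^n$ is smooth, $(\kk^n,0)$ is an obstruction theory for $(\mathbb{A}^n,0)$, so combining with Theorem~\ref{ref:positivetangentobstruction:thm} the source carries the obstruction theory $\bigl(\Hom(\IR,\OR)_{\geq 0}\oplus\kk^n,\ T^2(\DR)_{\geq 0}\bigr)$, while the target carries $\bigl(\Hom(\IR,\OR),T^2(\DR)\bigr)$ from Proposition~\ref{ref:obstructionstandard:prop}. The tangent map of the prospective map of obstruction theories is $d\theta_{([\DR],0)}$, whose image in $\Hom(\IR,\OR)$ equals $\Hom(\IR,\OR)_{\geq 0}+\langle\partialbar_1,\ldots,\partialbar_n\rangle$ as recorded above, and the obstruction map is the canonical inclusion $T^2(\DR)_{\geq 0}\hookrightarrow T^2(\DR)$. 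The substantive point is that the obstruction classes match: both are built by Schlessinger's recipe applied to a presentation of the relevant ideal (a presentation of $\IR$ for $\HshortN$, a homogeneous presentation of $\IRhom$ inside $\ambient\times\Gmultbar$ for $\HplusN$, cf.\ Proposition~\ref{ref:openembedding:prop}), and the homogenization identifications of Section~\ref{sec:prelims} are functorial enough to identify the two constructions along $\theta$. I expect this compatibility verification to be the most technical step of the argument.

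Given this, the hypothesis that $\DR$ has trivial negative tangents says precisely that the tangent map $d\theta_{([\DR],0)}$ is surjective, while $T^2(\DR)_{\geq 0}\hookrightarrow T^2(\DR)$ is injective. By the standard infinitesimal criterion — a map of obstruction theories with surjective tangent map and injective obstruction map forces, for every small extension $0\to K\to B\to A\to 0$, the map $D_{\HplusN\times\mathbb{A}^n}(B)\to D_{\HplusN\times\mathbb{A}^n}(A)\times_{D_{\HshortN}(A)}D_{\HshortN}(B)$ to be surjective — the morphism $\theta$ is formally smooth at $([\DR],0)$; since both schemes are of finite type over $\kk$, $\theta$ is smooth at $([\DR],0)$, hence smooth on an open neighbourhood $U$ of $([\DR],0)$. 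On the other hand, by Lemma~\ref{ref:universallyinjective:lem} the morphism $\theta$ is injective on $K$-points for every field $K\supset\kk$, i.e.\ universally injective. A smooth universally injective morphism has geometric fibres that are smooth and set-theoretically a single point, hence $0$-dimensional, so $\theta|_U$ is étale; and an étale universally injective morphism is a monomorphism (its diagonal is both an open immersion and surjective, hence an isomorphism), so $\theta|_U$ is a flat monomorphism locally of finite presentation, i.e.\ an open immersion. This proves the first assertion, with $U$ the asserted neighbourhood.

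For the consequences: $\theta(U)$ is an open subset of $\HshortN$ containing $[\DR]$, so every irreducible component $\compo$ of $\HshortN$ through $[\DR]$ meets $\theta(U)$ in a dense open subset. Every point of $\theta(U)$ is, by construction of $\theta$ together with Proposition~\ref{ref:supportfixed:prop}, a subscheme of the form $\DR'+v$ with $\DR'$ supported at the origin, hence supported at the single point $v$. Thus $\compo$ generically parametrizes subschemes supported at one point, i.e.\ $\compo$ is elementary. In particular, if $\DR$ were smoothable with $\deg\DR\geq 2$, the smoothable component through $[\DR]$ would generically parametrize $\deg\DR$ distinct reduced points, which is not supported at one point — a contradiction; and $\DR$ cleavable means, by definition, that $[\DR]$ lies on a non-elementary component, again a contradiction. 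Hence if $\DR\neq\Spec\kk$ then $\DR$ is neither smoothable nor cleavable.
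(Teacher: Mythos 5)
Your proposal is correct and follows essentially the same route as the paper: a map of obstruction theories with surjective tangent map (trivial negative tangents) and injective obstruction map $T^2(\DR)_{\geq 0}\hookrightarrow T^2(\DR)$ gives smoothness of $\theta$ at $([\DR],0)$, which combined with the universal injectivity of Lemma~\ref{ref:universallyinjective:lem} yields an \'etale monomorphism, hence an open immersion, and Proposition~\ref{ref:supportfixed:prop} then gives elementarity. The compatibility of obstruction classes that you flag as the technical step is likewise asserted without further elaboration in the paper, so your argument matches it in both structure and level of detail.
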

        \goodbreak%
        \begin{proof}
            \def\Hshort{\mathcal{H}}%
            \def\Hplus{\Hshort^{+}}%
            \def\Hplusextended{\Hplus \times \mathbb{A}^n}%
            For brevity, we denote $\Hshort := \HshortN$ and $\Hplus := \HplusN$.
            Since $\DR$ has trivial negative tangents, the tangent map
            $d\theta_{[\DR]}$ is surjective.
            Fix a small extension $0\to K\to B\to A\to 0$.
            By Theorem~\ref{ref:positivetangentobstruction:thm}, the map
            $\theta$ induces a map of obstruction theories
            \[\begin{tikzcd}
                    \Tspace_{\Hplusextended, [\DR]} \tensor_{\kk} K \arrow[d,
                        "d\theta_{[\DR]}", two heads]\arrow[r]\arrow[d]& (\Hplusextended)(B)
                \arrow[r]\arrow[d]& (\Hplusextended)(A) \arrow[r]\arrow[d] &
                \Ext(\IR, \OR)_{\geq 0} \tensor_{\kk} K\arrow[d, hook,
                "\Obspace_{\theta}"]\\
                    \Tspace_{\Hshort}\tensor_{\kk} K \arrow[r]& \Hshort(B)
                    \arrow[r]& \Hshort(A) \arrow[r] & \Ext(\IR, \OR) \tensor_{\kk} K.
                \end{tikzcd}
            \]
            By a diagram chase, the map $\theta^{\#}\colon \hat{\OO}_{\Hshort, [\DR]} \to
            \hat{\OO}_{\Hplus, [\DR]}$ satisfies infinitesimal lifting.
            Consequently, the map $\theta$ is smooth at $[\DR]$. By
            Lemma~\ref{ref:universallyinjective:lem}, this map is universally
            injective~\cite[Tag~01S2]{stacks_project}, hence it is \'etale at $[\DR]$ so it is
            \'etale at some neighbourhood $U$ of $[\DR]$. But then
            $\theta|_{U}\colon U\to \HshortN$ is an open
            immersion~\cite[Tag~02LC]{stacks_project} and
            so by
            Proposition~\ref{ref:supportfixed:prop} every component of
            $\HshortN$ containing $[\DR]$ is elementary.
        \end{proof}
        \begin{corollary}\label{ref:elementary:cor}
            Let $\DR \subset \mathbb{A}^n$ be a finite subscheme
            supported at the origin and with trivial negative tangents.
            Suppose further that $T^2(\DR)_{\geq 0} = 0$ or $\Ext^{1}(\IR, \OR)_{\geq 0} =
            0$. Then $[\DR]$ is a smooth point of $\HshortN$ lying on a
            unique elementary component of dimension $n + \dim_{[\DR]}
            \HplusN =
            \dim_{\kk} \Hom(\IR, \OR)$.
        \end{corollary}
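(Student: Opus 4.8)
The plan is to first show that $[\DR]$ is a smooth point of $\HplusN$, then transport this smoothness to $\HshortN$ through the map $\theta$, and finally extract the dimension from the fact that $\theta$ is \'etale at $([\DR],0)$.

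First I would dispose of the disjunction in the hypothesis. Since $T^2(\DR)_{\geq 0} \subset \Ext^1(\IR, \OR)_{\geq 0}$ by construction (Section~\ref{sec:prelims} and Theorem~\ref{ref:positivetangentobstruction:thm}), the assumption $\Ext^1(\IR, \OR)_{\geq 0} = 0$ already forces $T^2(\DR)_{\geq 0} = 0$; so in either case $T^2(\DR)_{\geq 0} = 0$. By Theorem~\ref{ref:positivetangentobstruction:thm}, the pair $(\HplusN, [\DR])$ carries an obstruction theory with tangent space $\Hom(\IR, \OR)_{\geq 0}$ and obstruction space $T^2(\DR)_{\geq 0} = 0$. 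A deformation functor admitting an obstruction theory with vanishing obstruction space is smooth: every small extension lifts, so the pro-representing ring $\hat{\OO}_{\HplusN, [\DR]}$ is a power series ring in $\dim_{\kk}\Hom(\IR, \OR)_{\geq 0}$ variables (see \cite[Chapter~6]{fantechi}). In particular $[\DR]$ is a smooth point of $\HplusN$ and $\dim_{[\DR]}\HplusN = \dim_{\kk}\Hom(\IR, \OR)_{\geq 0}$.

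Next I would invoke Theorem~\ref{ref:thetaetale:thm}. Since $\DR$ has trivial negative tangents, $\theta$ restricts to an open immersion of a neighbourhood of $([\DR],0)$ in $\HplusN \times \mathbb{A}^n$ onto an open neighbourhood of $[\DR]$ in $\HshortN$. As $\mathbb{A}^n$ is smooth of dimension $n$ and $[\DR]$ is a smooth point of $\HplusN$, the product $\HplusN \times \mathbb{A}^n$ is smooth at $([\DR],0)$; transporting along the open immersion, $[\DR]$ is a smooth point of $\HshortN$. A smooth point has a regular, hence integral, local ring, so it lies on a unique irreducible component, and by the last assertion of Theorem~\ref{ref:thetaetale:thm} that component is elementary; this is the unique elementary component through $[\DR]$. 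For the dimension, recall that in the proof of Theorem~\ref{ref:thetaetale:thm} the map $\theta$ is \'etale at $([\DR],0)$ (smoothness together with the universal injectivity of Lemma~\ref{ref:universallyinjective:lem}), so $d\theta_{([\DR],0)}$ is an isomorphism of tangent spaces. The source is smooth at $([\DR],0)$, so its tangent space has dimension $\dim_{[\DR]}\HplusN + n = \dim_{\kk}\Hom(\IR, \OR)_{\geq 0} + n$, while the target tangent space is $\Hom(\IR, \OR)$. Comparing, $\dim_{\kk}\Hom(\IR, \OR) = n + \dim_{[\DR]}\HplusN$, which is then the dimension of the elementary component.

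The only genuinely delicate point is the implication ``obstruction space $=0$'' $\Rightarrow$ ``local ring regular'': one must check that the abstract obstruction theory produced by Theorem~\ref{ref:positivetangentobstruction:thm} is of the standard kind for which this holds — finite-dimensional tangent space and obstruction maps functorial in small extensions — which is exactly the setting of \cite[Chapter~6]{fantechi}. Everything else is a formal consequence of Theorems~\ref{ref:positivetangentobstruction:thm} and~\ref{ref:thetaetale:thm} already proved above.
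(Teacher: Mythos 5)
Your proposal is correct and follows essentially the same route as the paper's proof: vanishing of the obstruction space $T^2(\DR)_{\geq 0}$ (which is contained in $\Ext^1(\IR,\OR)_{\geq 0}$, so either hypothesis suffices) gives smoothness of $[\DR]$ in $\HplusN$ via Theorem~\ref{ref:positivetangentobstruction:thm}, and Theorem~\ref{ref:thetaetale:thm} transports smoothness, elementarity, and the dimension count through the open immersion $\theta$. The paper merely states this in three sentences; your write-up fills in the same steps, including the tangent-space comparison giving $n + \dim_{[\DR]}\HplusN = \dim_{\kk}\Hom(\IR,\OR)$.
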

        \begin{proof}
            By Theorem~\ref{ref:positivetangentobstruction:thm}, the point
            $[\DR]\in \HplusN$ is smooth. By Theorem~\ref{ref:thetaetale:thm} the
            map $\theta$ is an open immersion near $[\DR]$ so $[\DR]\in \HshortN$ is smooth as well.
            Other assertions follow immediately.
        \end{proof}
        In the following corollary we show that having trivial
        negative tangents can be, in most cases, deduced from the Hilbert function
        of the tangent space.
        \begin{corollary}\label{ref:independence:cor}
            Let $\DS$ be standard graded and let $\DR \subset \mathbb{A}^n$ be a finite subscheme supported at
            the origin. Suppose that the characteristic of $\kk$ is zero or the ideal $\IR$ is
            homogeneous. Then the subscheme $\DR$ has trivial negative tangents if and
            only if $\dim_{\kk} \Hom(\IR, \OR)_{<0} = n$.
        \end{corollary}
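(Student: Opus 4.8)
The plan is to read everything off the description of $d\theta_{[\DR]}$ recorded just above. At $([\DR],0)$ the image of the tangent map of $\theta$ equals $\Hom(\IR,\OR)_{\geq 0}+\langle\partialbar_1,\dots,\partialbar_n\rangle$, so $\DR$ has trivial negative tangents if and only if the composite
\[
 \Phi\colon \langle\partial_1,\dots,\partial_n\rangle\longrightarrow \Hom(\IR,\OR)_{<0}=\Hom(\IR,\OR)/\Hom(\IR,\OR)_{\geq 0},\qquad \partial_i\mapsto\partialbar_i ,
\]
is surjective, and its cokernel is $T^1(\DR)_{<0}$. Since $\dim_{\kk}\Hom(\IR,\OR)_{<0}=\bigl(n-\dim_{\kk}\ker\Phi\bigr)+\dim_{\kk}\coker\Phi$, the corollary follows from the single claim
\[
 (\star)\qquad \dim_{\kk}\Hom(\IR,\OR)_{<0}\leq n\ \Longrightarrow\ \ker\Phi=0 .
\]
Indeed, if $\DR$ has trivial negative tangents then $\dim\Hom(\IR,\OR)_{<0}=\dim\operatorname{im}\Phi\leq n$, so $(\star)$ makes $\Phi$ injective and $\dim\Hom(\IR,\OR)_{<0}=n$; conversely, if $\dim\Hom(\IR,\OR)_{<0}=n$ then $(\star)$ gives $\ker\Phi=0$, hence $\Phi$ is an isomorphism and $\DR$ has trivial negative tangents. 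Note that $\ker\Phi$ consists exactly of the constant vector fields $\partial=\sum c_i\partial_i$ whose induced map $\IR\to\OR$, $s\mapsto\partial(s)+\IR$, lies in $\Hom(\IR,\OR)_{\geq 0}$.

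I would prove $(\star)$ — in fact the stronger assertion $\ker\Phi=0$, with no hypothesis on $\dim\Hom(\IR,\OR)_{<0}$ — directly when $\charr\kk=0$. Suppose $0\neq\partial\in\ker\Phi$; after a linear change of coordinates (which preserves the standard grading) we may take $\partial=\partial_1$. Since $\DR$ is finite and supported at the origin, $\DS_{\geq M}\subseteq\IR$ for some $M\geq 1$ and $\IR\subseteq\DS_{+}$. With $k=M$ we get $(\IR)_{\geq M}=\DS_{\geq M}$ and $(\OR)_{\geq M}=0$, so $\partial_1(x_1^{M})=M x_1^{M-1}\in\IR$, whence $x_1^{M-1}\in\IR$. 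A descending induction on $i$ then shows $[x_1^{i}]\in(\OR)_{\geq i+1}$ for all $0\leq i\leq M-1$: if $[x_1^{i}]\in(\OR)_{\geq i+1}$ with $i\geq 1$, write $x_1^{i}-g\in\IR$ with $g\in\DS_{\geq i+1}$; then $x_1^{i}-g\in(\IR)_{\geq i}$, so the image of $\partial_1(x_1^{i}-g)=i x_1^{i-1}-\partial_1(g)$ in $\OR$ lies in $(\OR)_{\geq i}$, and since $\partial_1(g)\in\DS_{\geq i}$ this forces $[x_1^{i-1}]\in(\OR)_{\geq i}$. The case $i=0$ yields $1\in\IR+\DS_{+}=\DS_{+}$, a contradiction. (Equivalently, in characteristic zero $\partial$ exponentiates to a nontrivial $\Gadd$-action on the finite scheme $\Spec\OR$, which is impossible.)

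When $\IR$ is homogeneous, $\partialbar_i\in\Hom(\IR,\OR)_{-1}$, so $\Phi$ lands in $\Hom(\IR,\OR)_{-1}$, the space $T^1(\DR)_{<0}$ contains $\bigoplus_{j\geq 2}\Hom(\IR,\OR)_{-j}$, and dualizing the conormal sequence of $\DR\subset\mathbb{A}^n$ (using that $\DS$ is standard graded) identifies $\ker\Phi$ with $\operatorname{Der}_{\kk}(\OR)_{-1}$, the space of constant vector fields on $\mathbb{A}^n$ preserving $\IR$. In characteristic zero the previous paragraph applies; the delicate case is $\charr\kk=p>0$, where $\ker\Phi$ may be nonzero (for instance $\partial_1$ preserves $(x_1^{p})\subset\kk[x_1]$). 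Here I would show that a nonzero $\partial_1\in\ker\Phi$ forces $\dim_{\kk}\Hom(\IR,\OR)_{<0}\geq n+1$, so that both sides of the asserted equivalence fail and $(\star)$ holds vacuously. Concretely, if $\partial_1(\IR)\subseteq\IR$ and $N$ is minimal with $x_1^{N}\in\IR$, then $\partial_1(x_1^{N})=N x_1^{N-1}\in\IR$ together with minimality forces $p\mid N$, hence $N\geq 2$, $x_1^{N-1}\notin\IR$, $\IR\cap\kk[x_1]=(x_1^{N})$, and $\kk[x_1]/(x_1^{N})\hookrightarrow\OR$; the maps $f x_1^{N}\mapsto [f x_1^{j}]$ for $0\leq j\leq N-1$ give $N$ linearly independent elements of $\Hom(\DS x_1^{N},\OR)$ in degrees $-N,\dots,-1$. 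The main obstacle is to promote enough of these through the restriction $\Hom(\IR,\OR)\to\Hom(\DS x_1^{N},\OR)$ — or to produce substitutes via the isomorphism $\Hom(\IR,\OR)_{<0}\cong\Ext^{1}(\OR,\OR)_{<0}$ coming from $0\to\IR\to\DS\to\OR\to0$ — to obtain $n+1$ independent homomorphisms in $\Hom(\IR,\OR)_{<0}$; controlling this extension problem, i.e.\ showing that the new directions in degrees $\leq -2$ survive in $T^1(\DR)_{<0}$, is the part I expect to require the most care.
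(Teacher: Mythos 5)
Your reduction of the corollary to the claim $(\star)$ is correct, and your characteristic-zero argument is valid; it is a mild variant of the paper's, which takes the smallest $k$ with $\DS_{\geq k}\subseteq \IR$ and notes that $\partial_1(\DS_{\geq k})\supseteq \DS_{k-1}$ together with $\partial_1((\IR)_{\geq k})\subseteq \IR+\DS_{\geq k}$ forces $\DS_{\geq k-1}\subseteq\IR$, contradicting minimality -- one step instead of your descending induction on powers of $x_1$, but the same mechanism.

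The homogeneous positive-characteristic case is a genuine gap, and it is where all the real content of the paper's proof lies. Your candidate maps $fx_1^{N}\mapsto[fx_1^{j}]$ are defined only on the principal ideal $\DS x_1^{N}$, and an $\DS$-module homomorphism on a subideal of $\IR$ has no reason to extend to $\IR$; the extension problem you flag is not a technicality but precisely the point. The paper's missing idea is the Hasse derivative. After a coordinate change, take the lex-largest sequence $Q=(q_1,\dots,q_n)$ of powers of $p$ such that $\IR$ is generated by $\IR\cap\kk[x_1^{q_1},\dots,x_n^{q_n}]$. Then for each $i$ every Hasse derivative $\partial_i^{[j]}$ with $j<q_i$ preserves $\IR$, and the identity $\partial_i^{[s]}(fg)=\sum_{j=0}^{s}\partial_i^{[j]}(f)\,\partial_i^{[s-j]}(g)$ shows that $\partial_i^{[q_i]}$ descends to a genuine degree $-q_i$ element of $\Hom(\IR,\OR)$ -- this is the globally defined replacement for your maps on $\DS x_1^{N}$. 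Maximality of $Q$ forces these $n$ homomorphisms to be linearly independent, and multiplying by $x_i^{j}$ for $0\leq j<q_i$ (with independence checked degree-by-degree in the $\mathbb{Z}/q_1\times\cdots\times\mathbb{Z}/q_n$-grading that $\OR$ inherits from the generation of $\IR$ inside $\kk[x_1^{q_1},\dots,x_n^{q_n}]$) produces $q_1+\cdots+q_n\geq p+(n-1)>n$ linearly independent elements of $\Hom(\IR,\OR)_{<0}$, which is exactly the contradiction you were aiming for. Without some device of this kind -- a construction producing module homomorphisms on all of $\IR$ rather than on a principal subideal -- your plan for $\charr\kk=p>0$ does not go through.
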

        \begin{proof}
            \newcommand{\hasse}[2]{\partial_{#1}^{[#2]}}%
            If $\DR$ has trivial negative tangents, then by
            Theorem~\ref{ref:thetaetale:thm} the map
            $d\theta_{[\DR]}$ is bijective, so $\dim_{\kk} \Hom(\IR,
            \OR)_{<0} =
            \dim_{\kk} \spann{\partial_1, \ldots , \partial_n} = n$.
            Suppose $\dim_{\kk} \Hom(\IR, \OR)_{<0} = n$.
            We are to prove that $\partialbar_1, \partialbar_2, \ldots,
            \partialbar_n$ are linearly independent elements of $\Hom(\IR, \OR)/\Hom(\IR, \OR)_{\geq 0}$. Suppose it is not so. After a coordinate change we see that
            $\partialbar_1\in \Hom(\IR, \OR)_{\geq 0}$. By definition of
            $\Hom_{\geq 0}$, the operator $\partial_1$ satisfies for all $k$
            the containment
            \begin{equation}\label{eq:positivederivation}
                \partial_1((\IR)_{\geq k}) \subset \IR + \DS_{\geq k}.
            \end{equation}

            Suppose that the characteristic is zero and take a smallest $k$
            such that $(\IR)_{\geq k} = \DS_{\geq k}$. Then
            $\partial_1((\IR)_{\geq k}) = \DS_{\geq k-1}$, so $\DS_{\geq k-1}
            \subset \IR$, a contradiction with the choice of $k$.

            Suppose that $\IR$ is homogeneous. In this
            case Equation~\eqref{eq:positivederivation} implies that $\partial_1(\IR) \subset
            \IR$ so that $\charr \kk = p$ is positive and $\IR$ is generated by
            elements of $\kk[x_1^p, x_2, \ldots ,x_n]\cap\IR$.
            Recall that for all $1\leq i\leq n$ the derivation $\partial_i$ is a first element of a
            sequence of differential operators $\hasse{i}{j}$ on $\DS$, called
            Hasse derivatives. They are defined first as operators on
            $\mathbb{Z}[x_1, \ldots ,x_n]$ by
            \[
                \hasse{i}{j} := \frac{1}{j!}\frac{\partial^j}{\partial
                x_i^j}
            \]
            and then descended to $\DS$ by $\kk$-linearity.  For all $s\in
            \mathbb{N}$ and $f, g\in \DS$ they satisfy the
            identity $\hasse{i}{s}(fg) = \sum_{j=0}^s
            \hasse{i}{j}(f)\hasse{i}{s-j}(g)$.

            Let $Q = (q_1, \ldots , q_n)$ be the lex-largest sequence of powers of
            $p$ such that after some coordinate change the ideal $\IR$ is generated by $\IR\cap \kk[x_1^{q_1}, \ldots
            , x_n^{q_n}]$. Let $\DS' = \kk[x_1^{q_1}, \ldots , x_n^{q_n}]$.
            For all $i$ and $j < q_{i}$ we have $\hasse{i}{j}(\IR) \subset
            \IR$ so by the identity above the operator $\hasse{i}{q_i}$ descents to a
            $\DS$-module homomorphism $\overline{\hasse{i}{q_i}}\colon \IR\to \OR$ of
            degree $-q_i$.
            The operator $\hasse{i}{q_i}$ acts on $\DS'$ as a partial
            derivative with respect to $i$-th coordinate.
            We claim that the homomorphisms $\{\hasse{i}{q_i}\ |\ i=1, \ldots ,
            n\}$ are linearly independent. Suppose not. By homogeneity,
            the dependence occurs
            between operators $\{\overline{\hasse{i}{q_i}}\ |\ i=1, \ldots
            ,n,\ q_i = q\}$ for some fixed $q$. After a coordinate change we have
            $\overline{\hasse{i}{q_i}} = 0$.  If $\hasse{i}{q_i}(\IR) \subset
            \IR$, then the ideal $\IR\cap \DS'$ is preserved by the
            \mbox{$i$-th} partial derivative so the sequence $Q$ is not
            maximal, a contradiction. We conclude that all homomorphisms
            $\overline{\hasse{i}{q_i}}$ are linearly independent.
            We claim that also the elements
            \begin{equation}\label{eq:minus}
                \left\{x_i^{j}\overline{\hasse{i}{q_i}}\ |\ i=1, 2, \ldots ,n,\ 
            j=0, 1, \ldots , q_i-1 \right\}
            \end{equation}
            are linearly independent in
            $\Hom(\IR, \DS/\IR)_{<0}$. Suppose it is not so.
            Since $\IR$ is generated by $\IR\cap \DS'$, the quotient $\DS/\IR$
            is naturally graded by $G = \mathbb{Z}/q_1\times \mathbb{Z}/q_2 \times  \ldots \times
            \mathbb{Z}/q_n$. The homomorphism $x_i^j \overline{\hasse{i}{q_i}}$ is non-zero
            and has $G$-degree $(0, \ldots , 0, j, 0, \ldots , 0)$, so the linear dependence may occur only between
            elements of $\{\overline{\hasse{i}{q_i}}\ |\ i=1, \ldots ,n\}$.
            But that was excluded above.
            Therefore, there are no linear dependencies and the set~\eqref{eq:minus} is a basis of a $(q_1 +
            \ldots  + q_n)$-dimensional subspace of $\Hom(\IR, \OR)_{<0}$. But
            $q_1 +  \ldots  + q_n \geq p + (n-1) > n$, a contradiction.
        \end{proof}
        \begin{remark}
            We do not know whether the equivalence of Lemma~\ref{ref:independence:cor} holds
            also for non-homogeneous ideals in positive
            characteristic, though we would guess so.
        \end{remark}
        \begin{theorem}\label{ref:generictrivialtangents:thm}
            Let $\compo \subset \HshortN$ be a irreducible component. Suppose that
            $\compo$ is generically reduced and that $\kk$ has characteristic
            zero. Then $\compo$ is
            elementary if and only if a general point of $\compo$ has trivial
            negative tangents.
        \end{theorem}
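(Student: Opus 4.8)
The plan is to prove both implications; the first is immediate and the second carries the content. If a general point $[\DR']$ of $\compo$ has trivial negative tangents --- which for a subscheme not supported at the origin we read after translating its support there --- then, letting $[\DR]$ be that translate, the translation action of $\mathbb{A}^n$ on $\HshortN$ preserves each irreducible component ($\mathbb{A}^n$ being connected), so $[\DR]\in\compo$ has trivial negative tangents, and Theorem~\ref{ref:thetaetale:thm} forces every component of $\HshortN$ through $[\DR]$, in particular $\compo$, to be elementary.

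For the converse, assume $\compo$ is elementary; after base change to $\kkbar$ (harmless, as it touches neither hypothesis and, by flat base change for $\Hom$, not the conclusion) I would argue as follows. As above $\compo$ is translation-stable. Since $\compo$ is elementary, every subscheme it parametrizes is supported at one point, and translating that point to the origin realizes the subscheme as a translate of one supported at the origin; so, letting $W\subset\HplusN$ be the reduced preimage of $\compo$ under $\thetazero$ --- here I use that $\HplusN$ is a genuine finite type scheme (Proposition~\ref{ref:openembedding:prop}), whose $\kk$-points are exactly the subschemes supported at the origin (Proposition~\ref{ref:supportfixed:prop}) --- the morphism $\theta|_{W\times\mathbb{A}^n}\colon W\times\mathbb{A}^n\to\compo$ is surjective, hence dominant. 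Fix an irreducible component $W_0$ of $W$ for which $\theta|_{W_0\times\mathbb{A}^n}$ is dominant onto $\compo$; then $\dim W_0\geq\dim\compo-n=:d-n$. Because $\compo$ is generically reduced and $\charr\kk=0$, a general point of $\compo$ is a smooth point of $\HshortN$ of local dimension $d$; since the smooth locus is translation-stable and $\theta$ intertwines translation on its second factor with translation on $\HshortN$, dominance of $\theta|_{W_0\times\mathbb{A}^n}$ lets me pick a general closed point $[\DR]\in W_0$ with $\thetazero([\DR])$ a smooth point of $\HshortN$ lying on $\compo$, so that $\dim_{\kk}\Hom(\IR,\OR)=\dim_{[\DR]}\HshortN=d$ by Proposition~\ref{ref:obstructionstandard:prop}.

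Now the dimension count. By Theorem~\ref{ref:positivetangentobstruction:thm}, $\dim_{\kk}\Hom(\IR,\OR)_{\geq 0}=\dim T_{[\DR]}\HplusN\geq\dim_{[\DR]}\HplusN\geq\dim W_0\geq d-n$, whereas $\dim_{\kk}\Hom(\IR,\OR)=\dim_{\kk}\Hom(\IR,\OR)_{\geq 0}+\dim_{\kk}\Hom(\IR,\OR)_{<0}$; combining these gives $\dim_{\kk}\Hom(\IR,\OR)_{<0}\leq n$. On the other hand, in characteristic zero the classes $\partialbar_1,\ldots,\partialbar_n$ are linearly independent modulo $\Hom(\IR,\OR)_{\geq 0}$ --- this is exactly the computation in the proof of Lemma~\ref{ref:independence:cor} --- so $\dim_{\kk}\Hom(\IR,\OR)_{<0}\geq n$, hence equality holds and $[\DR]$ has trivial negative tangents. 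Since having trivial negative tangents is translation-invariant and, by dominance of $\theta|_{W_0\times\mathbb{A}^n}$, a general point of $\compo$ is a translate of such an $[\DR]$, a general point of $\compo$ has trivial negative tangents.

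The real obstacle is the inequality $\dim_{\kk}\Hom(\IR,\OR)_{<0}\leq n$, and it leans on two things that must be in place: first, that $\HplusN$ is represented by a finite type scheme (Proposition~\ref{ref:openembedding:prop}), so that $\dim_{[\DR]}\HplusN$ makes sense, bounds the tangent dimension $\dim_{\kk}\Hom(\IR,\OR)_{\geq 0}$ from below, and is itself $\geq\dim W_0\geq d-n$; second, that $[\DR]$ is a genuine smooth point of $\HshortN$, which is precisely where generic reducedness together with $\charr\kk=0$ is indispensable, as it promotes $\dim T_{[\DR]}\HshortN\geq\dim_{[\DR]}\HshortN$ to an equality. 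The rest --- translation-stability of $\compo$, and the transfer of ``general point'' from $\compo$ to $W_0$ through the translation action and translation-stability of the smooth locus --- is routine dimension theory together with the splitting $\dim\Hom=\dim\Hom_{\geq 0}+\dim\Hom_{<0}$.
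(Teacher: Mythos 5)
Your proof is correct and takes essentially the same route as the paper's: both pick an irreducible component on the \BBname{} side dominating $\compo$, use generic reducedness and characteristic zero to locate a point $[\DR]$ where $\dim T_{\HshortN,[\DR]}=\dim\compo$, and conclude by combining the bound $\dim\Hom(\IR,\OR)_{\geq 0}\geq\dim\compo-n$ (coming from representability of $\HplusN$ and Theorem~\ref{ref:positivetangentobstruction:thm}) with the linear independence of $\overline{\partial}_1,\ldots,\overline{\partial}_n$ modulo $\Hom(\IR,\OR)_{\geq 0}$ from the proof of Corollary~\ref{ref:independence:cor}. The only difference is presentational: the paper states the last step as an injective tangent map $d\theta_{[\DR]}$ between spaces of equal dimension being surjective, whereas you split off $\Hom(\IR,\OR)_{<0}$ and count; these are the same argument.
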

        \begin{proof}
            \def\compoplus{\mathcal{W}}%
            If any point of $\compo$ has trivial negative
            tangents, then $\compo$ is elementary by
            Theorem~\ref{ref:thetaetale:thm}. Conversely, suppose that
            $\compo$ is elementary, so it lies in the image of $\theta$. Take
            an irreducible component $\compoplus$ of $\HplusN \times \mathbb{A}^n$
            which dominates $\compo$.
            By assumptions, a general point of $\compo$ is smooth. Choose a
            point $[\DR]\in \theta(\compoplus)$ which is a
            smooth point of $\HshortN$.
            The characteristic is zero, so the
            tangent map
            \[
                d\theta_{[\DR]}\colon T_{\HplusN \times \mathbb{A}^n,
            [\DR]}\to T_{\HshortN, [\DR]}
            \]
            is injective at $[\DR]$, see proof of
            Corollary~\ref{ref:independence:cor}.
            Since $[\DR]$ is smooth, we have $\dim T_{\HshortN, [\DR]} =
            \dim_{[\DR]} \HshortN = \dim
            \compo$.
            Since $\theta$ is dominating near $[\DR]$, we have $\dim T_{\HplusN \times \mathbb{A}^n,
            [\DR]} \geq \dim_{[\DR]}
            (\HplusN\times \mathbb{A}^n) \geq \dim\compo$.
            The source of the injective map $d\theta_{[\DR]}$ has dimension at least equal to the
            dimension of the target, so this map is surjective as well.
        \end{proof}
        \begin{example}
            An earlier version of this paper incorrectly stated that in the proof
            of Theorem~\ref{ref:generictrivialtangents:thm} we may take
            $[\DR]\in \compo$ to be any smooth point of $\HshortN$.
            This is false. For example take $I = (x_1, x_2)^2 + (x_3, x_4)^2 +
            (x_1x_3 + x_2x_4)\subset \DS = \mathbb{Q}[x_1, x_2, x_3, x_4]$, which is
            a smooth point on an elementary component by~\cite{CEVV} or
            Theorem~\ref{ref:maintheoremexamples:thm}. Introduce a grading on
            $\DS$ by $\deg(x_1) = \deg(x_3) = 3$
            and $\deg(x_2) = \deg(x_4) = 1$. Then a direct computation shows
            that $\Spec(\DS/I)$ does not have trivial negative tangents.
            In this example, the grading on $\DS$ is non-standard. We do not know whether a
            similar example exists for the standard grading.
        \end{example}

        \newcommand{\iotaflag}{\iota_{\mathrm{flag}}}%
        \newcommand{\HilbFlag}{\operatorname{HilbFlag}}%
        \newcommand{\HilbFlagplus}{\operatorname{HilbFlag}^+}%
        \newcommand{\Hilbm}{\OHilb_{\DM}}%
        \newcommand{\Hilbd}{\OHilb_{\DR}}%
        \newcommand{\Hilbmplus}{\Hilbm^{+}}%
        \newcommand{\Hilbdplus}{\Hilbd^{+}}%

        Let us pass to the flag case. We return to the slightly more general
        setup: we consider subschemes of $\ambient$ instead of just
        $\mathbb{A}^n$.
        Consider the flag Hilbert scheme $\HilbFlag$, parametrizing pairs $\DR
        \subset \DM$ of subschemes of
        $\ambient$, see~\cite[Section~4.5]{Sernesi__Deformations}. As in Proposition~\ref{ref:openembedding:prop}, we see
        that its \BBname{} decomposition
        $\HilbFlagplus$ exists and is embedded by a map $\iotaflag$ as an open
        subset of a multigraded flag Hilbert scheme $\HSturmFlag$ defined
        functorially by
        \begin{align}\label{eq:HSturmflag:def}
            \HSturmFlag(B) = \{ \familY \subset \famil\subset \ambientext
                \times B\xrightarrow{\pi} B \ |\ &\familY, \famil\mbox{ are }
            \Gmult\mbox{-invariant},\\ &\forall_i
            \left(\pi_*\OO_{\familY}\right)_i\mbox { and
            }\left(\pi_*\OO_{\famil}\right)_i \mbox{ are locally
            free of finite rank}\}.\nonumber
        \end{align}
        The scheme $\HilbFlag$ comes with forgetful maps $\pi_{\DM}([\DR \subset \DM])
        = [\DM]$ and $\pi_{\DR}([\DR\subset\DM]) = [\DR]$.
        We obtain a diagram of schemes
        \begin{equation}
            \label{eq:comparisonmaps}
            \begin{tikzcd}
                \Hilbmplus \arrow[d] &\HilbFlagplus \arrow[d]\arrow[r,
                "\pi_{\DR}^+"]\arrow[l,
                "\pi_{\DM}^+"'] &
                \Hilbdplus\arrow[d]\\
                \Hilbm &\HilbFlag \arrow[r, "\pi_{\DR}"]\arrow[l,
                "\pi_{\DM}"'] &
                \Hilbd,
            \end{tikzcd}
        \end{equation}
        where $\Hilbd = \Hilbm = \HshortNAmb$ and the subscript
        indicates the $\kk$-point of interest.

        Fix subschemes $\DR \subset \DM \subset\ambient$
        supported at the
        origin and denote by $p=[\DR \subset \DM]$ the obtained $\kk$-point of
        $\HilbFlagplus$.
        We construct an obstruction theory
        for the scheme $(\HilbFlagplus, p)$ and maps between the obstruction
        theories of the schemes of the upper row of Diagram~\eqref{eq:comparisonmaps}.
%        Let $p =[\DR \subset \DM] \in \HilbFlag$.
%        Recall from Diagram~\ref{eq:comparisonmaps} that the scheme
%        $(\HilbFlagplus, p)$ has forgetful maps to $(\Hilbdplus, [\DR])$,
%        $(\Hilbmplus, [\DM])$. The latter spaces have obstruction
%        theories given in Theorem~\ref{ref:positivetangentobstruction:thm}.
%        In the following, we construct  an obstruction theory for
%        $(\HilbFlagplus, p)$ and maps between the three theories.

        \newcommand{\Defpair}{D_{\HSturmFlag}}%
        \newcommand{\DefM}{D_{\Hilbmplus}}%
        \newcommand{\DefR}{D_{\Hilbdplus}}%
        \newcommand{\Tspaceflag}{\Tspace_{\mathrm{flag}}}%
        \newcommand{\Obspaceflag}{\Obspace_{\mathrm{flag}}}%

        \begin{theorem}\label{ref:obstructionmain:thm}
            Suppose that the map $\righttarrow_{\geq 0}$ in Diagram~\eqref{eq:diagram} is surjective. The
            pointed scheme $(\HilbFlagplus, [\DR \subset \DM])$ has an
            obstruction theory $(\Tspaceflag, \Obspaceflag)$ given by
            the following pullback diagrams
            \[
                \begin{tikzpicture}[commutative diagrams/every diagram]
                    \matrix[matrix of math nodes, name=m, commutative
                    diagrams/every cell]{
                        \Tspaceflag &     \Hom(\IR, \OR)_{\geq 0}\\
                        \Hom(\IM, \OM)_{\geq 0} & \Hom(\IM,
                        \OR)_{\geq 0}\\
                    };
                    \path[commutative diagrams/.cd, every arrow, every label]
                    (m-1-1) edge node {$\pi_{\DR}$} (m-1-2)
                    (m-1-1) edge [left] node {$\pi_{\DM}$} (m-2-1)
                    (m-2-1) edge (m-2-2)
                    (m-1-2) edge (m-2-2);
                    \begin{scope}[shift=($(m-1-1)!.15!(m-2-2)$)]
                        \draw[line width=0.3mm] +(-0.3, -0.3) -- + (0, -0.3) --
                        +(0,0);
                    \end{scope}
                \end{tikzpicture}
                \quad
                \begin{tikzpicture}[commutative diagrams/every diagram]
                    \matrix[matrix of math nodes, name=m, commutative
                    diagrams/every cell]{
                        \Obspaceflag &     T^2(\DR)_{\geq 0}\\
                        T^2(\DM)_{\geq 0} & \Ext^1(\IM,
                        \OR)_{\geq 0}\\
                    };
                    \path[commutative diagrams/.cd, every arrow, every label]
                    (m-1-1) edge node {$\pi_{\DR}$} (m-1-2)
                    (m-1-1) edge [left] node {$\pi_{\DM}$} (m-2-1)
                    (m-2-1) edge (m-2-2)
                    (m-1-2) edge (m-2-2);
                    \begin{scope}[shift=($(m-1-1)!.15!(m-2-2)$)]
                        \draw[line width=0.3mm] +(-0.3, -0.3) -- + (0, -0.3) --
                        +(0,0);
                    \end{scope}
                \end{tikzpicture}
            \]
            The projections $\pi_{\DM}$ and $\pi_{\DR}$ are maps of obstruction theories.
        \end{theorem}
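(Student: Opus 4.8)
The plan is to run the argument of Theorem~\ref{ref:positivetangentobstruction:thm} in the flag setting. The open immersion $\iotaflag$ identifies a neighbourhood of $[\DR\subset\DM]$ in $\HilbFlagplus$ with a neighbourhood of the corresponding point $[\familY\subset\famil]$ of $\HSturmFlag$, where $\familY$ and $\famil$ are cut out in $\Spec\DS[t^{-1}]$ by the homogenized ideals $\IRhom$ and $\IM^{h}$ respectively, with $\IM^{h}\subseteq\IRhom$. Since an open immersion induces an isomorphism of completed local rings, it suffices to build the asserted obstruction theory for $(\HSturmFlag,[\familY\subset\famil])$. Because the multigraded flag Hilbert functor parametrizes graded (equivalently $\Gmult$-invariant) ideals, a deformation of $[\familY\subset\famil]$ over an Artinian $\kk$-algebra $A$ is a pair of graded ideals $\mathcal{I}_{\DM}\subseteq\mathcal{I}_{\DR}$ of $\DS[t^{-1}]\tensor A$ deforming $\IM^{h}\subseteq\IRhom$. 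A first-order such deformation is a pair $(\gamma,\beta)$ of degree-zero homomorphisms $\gamma\colon\IM^{h}\to\OM^{h}$ and $\beta\colon\IRhom\to\ORhom$ subject to $\mathcal{I}_{\DM}\subseteq\mathcal{I}_{\DR}$, which unwinds to the condition that $\gamma$ and $\beta$ have the same image in $\Hom_{\DS[t^{-1}]}(\IM^{h},\ORhom)_{0}$. Using \eqref{eq:nonnegativetangents} this identifies the tangent space with the fibre product $\Hom(\IR,\OR)_{\geq0}\times_{\Hom(\IM,\OR)_{\geq0}}\Hom(\IM,\OM)_{\geq0}$, giving $\Tspaceflag$; this part does not use the hypothesis on $\righttarrow_{\geq0}$.

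For the obstructions, fix a small extension $0\to K\to B\to A\to 0$ and a flag deformation $\mathcal{I}_{\DM}\subseteq\mathcal{I}_{\DR}$ over $A$. Forgetting one member of the flag at a time and applying the obstruction theories of $\Hilbmplus$ and $\Hilbdplus$ from Theorem~\ref{ref:positivetangentobstruction:thm} yields obstruction classes $ob_{\DM}\in T^2(\DM)_{\geq0}$ and $ob_{\DR}\in T^2(\DR)_{\geq0}$ to extending the two members over $B$, each represented by an extension of the shape \eqref{eq:obstructionSingle}. The first technical step is to verify that the image of $ob_{\DR}$ under restriction along $\IM\into\IR$ and the image of $ob_{\DM}$ under post-composition with $\OM\onto\OR$ agree in $\Ext^1(\IM,\OR)_{\geq0}$; this is a diagram chase in which the inclusion $\mathcal{I}_{\DM}\subseteq\mathcal{I}_{\DR}$ induces a morphism between the two instances of Diagram~\ref{fig:obstructionSingle} and hence an isomorphism of the two resulting extensions of $\IM$ by $\OR\tensor_{\kk}K$. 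Consequently the pair $(ob_{\DR},ob_{\DM})$ defines a well-defined element $ob_{\HilbFlagplus}$ of the fibre product $\Obspaceflag=T^2(\DR)_{\geq0}\times_{\Ext^1(\IM,\OR)_{\geq0}}T^2(\DM)_{\geq0}$.

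It remains to show that $ob_{\HilbFlagplus}=0$ if and only if the flag deformation lifts over $B$, and here the surjectivity of $\righttarrow_{\geq0}$ enters. If $ob_{\HilbFlagplus}=0$ then $ob_{\DM}=ob_{\DR}=0$, so both members admit lifts $\widetilde{\mathcal{I}}_{\DM}$ and $\widetilde{\mathcal{I}}_{\DR}$ over $B$; a priori one need not have $\widetilde{\mathcal{I}}_{\DM}\subseteq\widetilde{\mathcal{I}}_{\DR}$, and the failure is recorded by a degree-zero homomorphism $\delta\in\Hom(\IM,\OR)_{\geq0}\tensor_{\kk}K$ that vanishes exactly when the inclusion holds. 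Changing $\widetilde{\mathcal{I}}_{\DM}$ to another lift of $\mathcal{I}_{\DM}$ alters $\delta$ by an element of the image of $\righttarrow_{\geq0}\tensor\id_{K}$; since $\righttarrow_{\geq0}$ is surjective this image is all of $\Hom(\IM,\OR)_{\geq0}\tensor_{\kk}K$, so we may arrange $\delta=0$ and obtain a lift of the flag. Conversely a lift of the flag gives compatible lifts of $\mathcal{I}_{\DM}$ and $\mathcal{I}_{\DR}$, forcing $ob_{\DM}=ob_{\DR}=0$. That $\pi_{\DM}^{+}$ and $\pi_{\DR}^{+}$ are maps of obstruction theories is then immediate: the structure maps of the two fibre products are the tangent and obstruction maps, and $ob_{\HilbFlagplus}$ was built with components $ob_{\Hilbmplus}$ and $ob_{\Hilbdplus}$, so the comparison squares commute. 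I expect the main obstacle to be the diagram chase matching the two obstruction classes in $\Ext^1(\IM,\OR)_{\geq0}$; the rest is bookkeeping with graded $\DS[t^{-1}]$-modules together with the identifications recalled in Section~\ref{sec:prelims}.
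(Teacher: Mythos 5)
Your proposal is correct and follows essentially the same route as the paper's proof: reduce to the multigraded flag Hilbert scheme via $\iotaflag$, identify the tangent space with the fibre product, define the obstruction as the pair of individual obstruction classes after checking they agree in $\Ext^1(\IM,\OR)_{\geq 0}$, and use the surjectivity of $\righttarrow_{\geq 0}$ to modify the lift of the larger ideal so that the flag inclusion persists. There are no substantive differences.
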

        \begin{proof}
            \def\Sext{T}%
            \def\minId{K}%
            \def\alphaR{\alpha_R}%
            \def\alphaM{\alpha_M}%
            \def\betaR{\beta_{R}}%
            \def\betaM{\beta_{M}}%
            \def\IMdef{\II_{\mathcal{M}}}%
            \def\IRdef{\II_{\mathcal{R}}}%
            \def\OMdef{\OO_{\mathcal{M}}}%
            \def\ORdef{\OO_{\mathcal{R}}}%
            \def\IMtryw{\IM^h}%
            \def\IRtryw{\IR^h}%
            \def\OMtryw{\OM^h}%
            \def\ORtryw{\OR^h}%
            \def\eflag{e_{\mathrm{flag}}}%
            \def\IMplus{\JJ_{\mathcal{M}}}%
            \def\IRplus{\JJ_{\mathcal{R}}}%
            \def\OMplus{\mathcal{Q}_{\mathcal{M}}}%
            \def\ORplus{\mathcal{Q}_{\mathcal{R}}}%
            Using the embedding $\iotaflag$, see~\eqref{eq:HSturmflag:def}, it is enough to produce an
            obstruction theory for $\HSturmFlag$. Let $\Sext =
            \DS[t^{-1}] = H^0(\ambientext, \OO_{\ambientext})$.
            The tangent space is computed
            in~\cite[Proposition~4.5.3]{Sernesi__Deformations}.
            Our construction of the obstruction space below is a
            straightforward generalization of
            Proposition~\ref{ref:obstructionstandard:prop}.
            We abbreviate $\Sext\tensor_{\kk} (-)$ to $\Sext_{(-)}$.
            Consider a small extension $0\to \minId \to B\to A\to 0$ and
            an element $d\in \Defpair(A)$ corresponding to a
            $\Gmult$-invariant deformation
            $\mathcal{R} \subset \mathcal{M} \subset \Spec (\Sext_A)$.
            Diagrams~\ref{fig:obstructionSingle} for deformations
            $\mathcal{R}$ and $\mathcal{M}$ jointly form the commutative
            Diagram~\ref{fig:obstructions} with exact rows and columns.
            By Theorem~\ref{ref:positivetangentobstruction:thm}, the
            obstructions to deforming $\DR$ and $\DM$ are elements $e_{\DM}\in
            T^2(\DM)_{\geq 0} \subset \Ext^1(\IM, \OM)$ and $e_{\DR}\in
            T^2(\DR)_{\geq 0}\subset \Ext^1(\IR, \OR)$ respectively.
            \begin{figure}[h]
                \begin{center}
                \begin{tikzcd}[row sep=scriptsize, column sep = tiny]
                    & 0\arrow[r] & \IMtryw\tensor_{\kk} \minId\arrow[rrdd,
                    "\alphaM"', bend right=25] \arrow[ld, hook]\arrow[rr]
                    & & \Sext_{\minId}\arrow[rr]\arrow[dd]\arrow[ld, equal] &&
                    \OMtryw\tensor_{\kk}
                    \minId\arrow[r]\arrow[ld] & 0\\
                    0\arrow[r] & \IRtryw \tensor_{\kk} \minId \arrow[rr, crossing over]\arrow[rrdd,
                    "\alphaR"', bend right=25] && \Sext_{\minId} \arrow[rr, crossing
                    over]\arrow[dd,crossing over] && \ORtryw\tensor_{\kk} \minId\arrow[r] & 0\\
                    & &&& \Sext_{B}\arrow[dd]\arrow[ld, equal]\arrow[rrdd,
                    "\betaM", bend left=25] && \\
                    & && \Sext_{B}  &&& \\
                    & 0\arrow[r] & \IMdef \arrow[ld, hook]\arrow[rr] & & \Sext_{A}
                    \arrow[rr]\arrow[ld, equal] && \OMdef\arrow[r]\arrow[ld] & 0\\
                    0\arrow[r] & \IRdef \arrow[rr] && \Sext_{A}
                    \arrow[rr]\arrow[from=uu, crossing over] && \ORdef
                    \arrow[r]\arrow[from=uull, crossing over, bend
                    left=25, "\betaR"] & 0
                \end{tikzcd}
                \end{center}
                \caption{Constructing
                obstructions.}\label{fig:obstructions}
            \end{figure}
            By tracing their
            construction in~\eqref{eq:obstructionSingle} and comparing with
            Diagram~\ref{fig:obstructions}, we see that the images of both $e_{\DR}$ and
            $e_{\DM}$ in $\Ext^1(\IM, \OR)$ are canonically isomorphic to
            \[
                0 \to \ORtryw\tensor_{\kk} K \to \frac{\ker \betaM}{\im
                \alphaR}\tensor_{A} \kk \to
                \IMdef\tensor_{A} \kk
                \to 0,
            \]
            thus we obtain an obstruction class $\eflag = (e_{\DM}, e_{\DR})\in
            \Obspaceflag$.
            It remains to prove that the vanishing of $\eflag$ is necessary and
            sufficient for $d\in \Defpair(A)$ to lie in the image of $\Defpair(B)$.
            Necessity follows from
            Theorem~\ref{ref:positivetangentobstruction:thm}.
            \begin{figure}[h]
                \begin{center}
            \begin{tikzcd}[row sep=scriptsize, column sep = scriptsize]
                & \IMtryw\tensor \minId \arrow[ld, hook]\arrow[rr]\arrow[dd,
                    "i_{\DM}"
                near end]
                & & \Sext_{\minId}\arrow[rr]\arrow[dd]\arrow[ld, equal] &&
                \OMtryw\tensor
                \minId\arrow[ld]\arrow[dd]\\
                 \IRtryw \tensor \minId \arrow[dd]\arrow[rr, crossing over] &&
                 \Sext_{\minId} \arrow[rr, crossing over] && \ORtryw\tensor \minId \\
                 & \IMplus\arrow[rr, "j" near start]\arrow[dd] && \Sext_{B}\arrow[dd]\arrow[ld,
                 equal]\arrow[rr] && \OMplus\arrow[dd]\\
                 \IRplus\arrow[rr, crossing over]\arrow[dd] &&
                 \Sext_{B}\arrow[rr]\arrow[dd]\arrow[from=uu,crossing over]  &&
                 \ORplus\arrow[from=uu, crossing over] \\
                 & \IMdef \arrow[ld, hook]\arrow[rr] & & \Sext_{A}
                \arrow[rr]\arrow[ld, equal] && \OMdef\arrow[ld]\\
                \IRdef \arrow[rr] && \Sext_{A}
                \arrow[rr]\arrow[from=uu, crossing over] &&
                \ORdef\arrow[from=uu,crossing over, "s_{\DR}" near start]
                \end{tikzcd}
                \end{center}
                \caption{Obstruction equal to
                zero.}\label{fig:deformations}
            \end{figure}
            Suppose that $\eflag = 0$.
            It this case, $\pi_M(d)\in \DefM(A)$ and $\pi_{\DR}(d)\in
            \DefR(A)$ come from
            elements $d_1' \in \DefM(B)$ and $d_2' \in \DefR(B)$ respectively.
            These elements correspond to extensions $\IMplus, \IRplus
            \subset \Sext_B$, which give a commutative
            Diagram~\ref{fig:deformations} with exact rows and columns.

            To obtain an element of $\Defpair(B)$, we need to ensure that
            $\IMplus \subset \IRplus$. In other words, we need the
            induced $\Sext_B$-module
            homomorphism $f\colon\IMplus \to \ORplus$ to be zero,
            see~Diagram~\ref{fig:deformations}. Since $0 = f
            \circ i_{\DM} = s_{\DR} \circ f$, the map $f$ is induced from a $\Sext_B$-module homomorphism
            $\IMdef \to \ORtryw\tensor_{\kk} K$, which comes from a $\Sext$-module homomorphism $h\colon\IMtryw \to
            \ORtryw\tensor_{\kk} K$. By construction, $h$ lies in
            \[
                \Hom_{\Sext}(\IMtryw, \ORtryw)_{0} \tensor_{\kk} K  \simeq
                \Hom_{\DS}(\IM, \OR)_{\geq 0} \tensor_{\kk} K.
            \]
            By assumption on $\phi_{\geq 0}$, such a homomorphism
            lifts to a $\Gmult$-invariant homomorphism $\IMtryw \to \OMtryw
            \tensor_{\kk} K$ and hence
            it gives a $\Gmult$-invariant homomorphism $f\colon\IMplus \to \OMplus$.
            By \cite[Theorem~6.4.5]{fantechi} the element $d_1' - f\in
            \DefM(B)$ is another $\Gmult$-invariant extension of
            $\pi_{\DM}(d)$. We replace $\IMplus$ by the ideal
            $\IMplus'$ corresponding to $d_1' - f$. By
            a diagram chase, we check that the
            map $\IMplus'\to \ORplus$ is zero so $\IMplus'$ is contained in
            $\IRplus$ and we obtain an element of $\Defpair(B)$.
        \end{proof}

        In the remaining part of this section we concentrate on the coarse obstruction spaces
        $\Ext^1$ and not $T^2$.
        The following theorem summarizes our discussion and gives a rich
        source of smooth components of $\HshortNAmb$.
        The idea is to take a smooth point $[\DM]$, so that any obstruction
        $e = e_{\mathrm{flag}}$ from
        Theorem~\ref{ref:obstructionmain:thm} satisfies $\pi_{M}(e) = 0$ and
        so $\pi_{R}(e)$ lies in the kernel of
        $\Ext^1(\IR, \OR)_{\geq 0} \to \Ext^1(\IM, \OR)_{\geq0}$. This kernel
        vanishes in a number of cases, one of them discussed in Remark~\ref{ex:smallextensions}.

        Recall from Diagram~\eqref{eq:diagram} the homomorphisms
        $\righttarrow_{\geq 0}\colon\Hom(\IM, \OM)_{\geq 0}\to \Hom(\IM,
            \OR)_{\geq 0}$ and $\upparrow_{\geq 0}\colon \Hom(\IM,
            \OR)_{\geq 0}\to \Ext^1(\IR/\IM, \OR)_{\geq 0}$.

        \begin{theorem} \label{ref:smoothness:thm}
            Suppose that $[\DM]\in \HshortNAmb$ is a smooth $\Gmult$-invariant point and the maps
            $\righttarrow_{\geq 0}, \upparrow_{\geq 0}$
            are both surjective.
            For all $p = [\DR \subset \DM]\in \HilbFlagplus$, the
            points $[\DR]\in \HplusNAmb$ and $p\in \HilbFlagplus$
            are smooth and the map
            $\pi_{\DR}^+\colon \HilbFlagplus\to \HplusNAmb$
            is smooth at $p$. Moreover
            \begin{align}\label{eq:dimensioncomparison}
                \dim_{[\DR]} \HplusNAmb &= \dim_{[\DM]} \HplusNAmb - \dim
                \Ext^1(\Ires, \OR)_{\geq 0} \\ &+  \dim\Hom(\Ires,
                \OR)_{\geq 0}-\dim \Hom(\IM, \Ires)_{\geq 0}.\nonumber
            \end{align}
        \end{theorem}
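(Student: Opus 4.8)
The plan is to combine Theorem~\ref{ref:obstructionmain:thm} (which applies since $\righttarrow_{\geq 0}$ is surjective) with two infinitesimal lifting arguments through the resulting maps of obstruction theories. First I record two facts. Taking $(-)_{\geq 0}$ preserves exactness of the third row and third column of Diagram~\eqref{eq:diagram}: after homogenization these are the degree-$0$ parts of the long exact $\Hom$--$\Ext$ sequences attached to the $\DS[t^{-1}]$-module sequences $0\to \IM^h\to \IR^h\to \Ires^h\to 0$ and $0\to \Ires^h\to \OM^h\to \ORhom\to 0$, and a graded piece of an exact sequence of graded modules is exact. Second, $[\DM]$ is a smooth point of $\HplusNAmb$: since $[\DM]$ is $\Gmult$-invariant, Theorem~\ref{ref:positivetangentobstruction:thm} equips it with the obstruction theory $(\Hom(\IM,\OM)_{\geq 0},\,T^2(\DM)_{\geq 0})$, and the canonical inclusion $T^2(\DM)_{\geq 0}\into T^2(\DM)$ sends the obstruction class of a $\Gmult$-invariant deformation of $\DM$ to the obstruction class of the underlying deformation in $\HshortNAmb$; the latter vanishes because $[\DM]$ is smooth in $\HshortNAmb$, so every obstruction in $T^2(\DM)_{\geq 0}$ vanishes and $\HplusNAmb$ is smooth at $[\DM]$ with tangent space $\Hom(\IM,\OM)_{\geq 0}$. (Equivalently, this is the classical fact that a \BBname{} cell through a fixed point of a locally smooth scheme is smooth.)

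Next I would prove $\pi_{\DR}^+$ is formally smooth at $p$. Fix a small extension $0\to K\to B\to A\to 0$, an element $a\in \HilbFlagplus(A)$ corresponding to a $\Gmult$-invariant deformation $\DR_A\subset\DM_A$, and a lift $b\in\HplusNAmb(B)$ of $\pi_{\DR}^+(a)$. By Theorem~\ref{ref:obstructionmain:thm} the obstruction to lifting $a$ to $B$ is a class $e=(e_{\DM},e_{\DR})\in\Obspaceflag$ whose two components are the obstructions to lifting $\DM_A$, respectively $\DR_A$, in $\HplusNAmb$. Here $e_{\DM}=0$ because $[\DM]\in\HplusNAmb$ is smooth, and $e_{\DR}=0$ because $b$ is an actual lift of $\DR_A$; hence $e=0$ and $a$ lifts to some $a'\in\HilbFlagplus(B)$. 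Now $\pi_{\DR}^+(a')$ and $b$ are two lifts of $\pi_{\DR}^+(a)$ over $B$, so they differ by some $\delta\in\Hom(\IR,\OR)_{\geq 0}\tensor K$. Since $\righttarrow_{\geq 0}$ is surjective, the projection $\pi_{\DR}\colon\Tspaceflag=\Hom(\IR,\OR)_{\geq 0}\times_{\Hom(\IM,\OR)_{\geq 0}}\Hom(\IM,\OM)_{\geq 0}\to\Hom(\IR,\OR)_{\geq 0}$ is surjective; pick $\varepsilon\in\Tspaceflag\tensor K$ with $\pi_{\DR}(\varepsilon)=\delta$. Because $\pi_{\DR}$ is a map of obstruction theories the torsor actions of $\Tspaceflag\tensor K$ and $\Hom(\IR,\OR)_{\geq0}\tensor K$ are compatible, so correcting $a'$ by $\varepsilon$ produces a lift of $a$ mapping to $b$. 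Thus $\pi_{\DR}^+$ lifts infinitesimally, hence (being locally of finite type over $\kk$) is smooth at $p$.

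Then $p$ is a smooth point of $\HilbFlagplus$: for any small extension and any $a\in\HilbFlagplus(A)$, the obstruction $e=(e_{\DM},e_{\DR})$ has $e_{\DM}=0$ as above, so in the fiber product $\Obspaceflag$ the component $e_{\DR}$ must have the same image in $\Ext^1(\IM,\OR)_{\geq 0}$ as $e_{\DM}=0$, i.e. $e_{\DR}\in\ker\bigl(T^2(\DR)_{\geq 0}\to\Ext^1(\IM,\OR)_{\geq 0}\bigr)$. But surjectivity of $\upparrow_{\geq 0}$, via the $(-)_{\geq 0}$-exact third column of Diagram~\eqref{eq:diagram}, makes $\Ext^1(\IR,\OR)_{\geq 0}\to\Ext^1(\IM,\OR)_{\geq 0}$ injective, so $e_{\DR}=0$, hence $e=0$ and $a$ lifts; thus $\HilbFlagplus$ is smooth at $p$. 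Combining with the previous paragraph, $\pi_{\DR}^+$ is a smooth morphism whose source is smooth at $p$; restricting to a neighbourhood of $p$ on which $\pi_{\DR}^+$ is smooth, its (open) image is a neighbourhood of $[\DR]$, and smoothness of the source descends along this smooth surjection to smoothness of $\HplusNAmb$ at $[\DR]$.

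Finally~\eqref{eq:dimensioncomparison} is pure linear algebra. As $[\DR]$ and $[\DM]$ are smooth, $\dim_{[\DR]}\HplusNAmb=\dim_{\kk}\Hom(\IR,\OR)_{\geq 0}$ and $\dim_{[\DM]}\HplusNAmb=\dim_{\kk}\Hom(\IM,\OM)_{\geq 0}$. Surjectivity of $\righttarrow_{\geq 0}$ turns the $(-)_{\geq 0}$-version of the third row of Diagram~\eqref{eq:diagram} into $0\to\Hom(\IM,\Ires)_{\geq 0}\to\Hom(\IM,\OM)_{\geq 0}\to\Hom(\IM,\OR)_{\geq 0}\to 0$, and surjectivity of $\upparrow_{\geq 0}$ turns the $(-)_{\geq 0}$-version of the third column into $0\to\Hom(\Ires,\OR)_{\geq 0}\to\Hom(\IR,\OR)_{\geq 0}\to\Hom(\IM,\OR)_{\geq 0}\to\Ext^1(\Ires,\OR)_{\geq 0}\to 0$. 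Taking alternating sums of dimensions in both sequences and eliminating $\dim\Hom(\IM,\OR)_{\geq 0}$ yields exactly~\eqref{eq:dimensioncomparison}. The main obstacle throughout is the bookkeeping of the fiber-product obstruction theory of Theorem~\ref{ref:obstructionmain:thm} in the two lifting arguments; verifying that $[\DM]$ is smooth in $\HplusNAmb$ and that $(-)_{\geq 0}$ is exact on Diagram~\eqref{eq:diagram} are the remaining points needing care, and the rest is formal.
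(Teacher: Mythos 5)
Your proof is correct and follows essentially the same route as the paper: kill the flag obstruction class by combining smoothness of $[\DM]\in\HplusNAmb$ with the injectivity of $\Ext^1(\IR,\OR)_{\geq 0}\to\Ext^1(\IM,\OR)_{\geq 0}$ forced by surjectivity of $\upparrow_{\geq 0}$, deduce smoothness of $\pi_{\DR}^+$ (and hence of $[\DR]$) from surjectivity of $\righttarrow_{\geq 0}$ on tangent spaces, and obtain~\eqref{eq:dimensioncomparison} by a chase on the non-negative part of Diagram~\eqref{eq:diagram}. The only notable variation is your argument that $[\DM]$ is smooth in $\HplusNAmb$, which goes through the inclusion $T^2(\DM)_{\geq 0}\into T^2(\DM)$ and vanishing of obstructions, whereas the paper observes directly that $\hat{\OO}_{\HplusNAmb,[\DM]}$ is the quotient of the regular ring $\hat{\OO}_{\HshortNAmb,[\DM]}$ by the ideal generated by its positive-weight part, hence regular; both are fine.
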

        Before we prove Theorem~\ref{ref:smoothness:thm}, we put forward its
        main consequence.
        \begin{corollary}\label{ref:smoothnessHilb:cor}
            In the setting of Theorem~\ref{ref:smoothness:thm} assume
            additionally
            that $\ambient$ is equal to $\mathbb{A}^n$ with positive grading and that
            $\DR$ has trivial negative tangents. The point $[\DR] \in \HshortN$
            is smooth lying on an elementary component of dimension
            \begin{align}\label{eq:dimensioncomparisonsnd}\nonumber
                n + \dim_{[\DR]} \HplusN &= n + \dim\Hom(\IM,
                \OM)_{\geq 0} - \dim
                \Ext^1(\Ires, \OR)_{\geq 0} \\
                &+  \dim\Hom(\Ires,
                \OR)_{\geq 0}-\dim\Hom(\IM, \Ires)_{\geq 0}.
            \end{align}
        \end{corollary}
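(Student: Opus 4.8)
The plan is to deduce the corollary by feeding the output of Theorem~\ref{ref:smoothness:thm} into the comparison map $\theta$ supplied by Theorem~\ref{ref:thetaetale:thm}. First I would record what Theorem~\ref{ref:smoothness:thm} already yields under its standing hypotheses, which are in force here: the point $[\DR]\in\HplusN$ is smooth and its local dimension $\dim_{[\DR]}\HplusN$ equals the right-hand side of~\eqref{eq:dimensioncomparison}. To pin down the term $\dim_{[\DM]}\HplusN$ appearing there, I would apply the same theorem to the trivial flag $[\DM\subset\DM]$; its hypotheses are automatic, since $\Ires=0$ forces $\righttarrow_{\geq 0}$ to be an isomorphism and $\upparrow_{\geq 0}$ a map onto the zero module. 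Hence $[\DM]\in\HplusN$ is smooth, and by the tangent space computation in Theorem~\ref{ref:positivetangentobstruction:thm} we get $\dim_{[\DM]}\HplusN=\dim_{\kk}\Hom(\IM,\OM)_{\geq 0}$. Substituting this into~\eqref{eq:dimensioncomparison} rewrites $n+\dim_{[\DR]}\HplusN$ as exactly the right-hand side of~\eqref{eq:dimensioncomparisonsnd}.

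Next I would bring in the two additional hypotheses of the corollary, namely that $\ambient=\mathbb{A}^n$ with positive grading and that $\DR$ has trivial negative tangents; these are precisely the assumptions of Theorem~\ref{ref:thetaetale:thm}. That theorem gives that $\theta\colon\HplusN\times\mathbb{A}^n\to\HshortN$ is an open immersion of a neighbourhood of $([\DR],0)$ into $\HshortN$ and that every component of $\HshortN$ through $[\DR]$ is elementary. Since $[\DR]$ is a smooth point of $\HplusN$ and $\mathbb{A}^n$ is smooth, $([\DR],0)$ is a smooth point of $\HplusN\times\mathbb{A}^n$, lying on a single component of local dimension $n+\dim_{[\DR]}\HplusN$; transporting along the open immersion $\theta$, the point $[\DR]\in\HshortN$ is smooth and lies on a unique irreducible component, whose dimension is therefore $n+\dim_{[\DR]}\HplusN$. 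Together with the computation of the first paragraph this is formula~\eqref{eq:dimensioncomparisonsnd}, and elementarity of the component is the last assertion of Theorem~\ref{ref:thetaetale:thm}.

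I do not anticipate a real obstacle: the corollary is a composite of Theorems~\ref{ref:smoothness:thm},~\ref{ref:positivetangentobstruction:thm} and~\ref{ref:thetaetale:thm} together with the bookkeeping of~\eqref{eq:dimensioncomparison}. The single point deserving an explicit sentence is the upgrade of smoothness of $[\DM]$ from $\HshortN$ to $\HplusN$, which is needed in order to replace $\dim_{[\DM]}\HplusN$ by $\dim\Hom(\IM,\OM)_{\geq 0}$; as above, I would settle it by running Theorem~\ref{ref:smoothness:thm} on the trivial flag $[\DM\subset\DM]$.
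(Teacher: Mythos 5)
Your proposal is correct and follows essentially the same route as the paper, which deduces the corollary directly from Theorem~\ref{ref:smoothness:thm} together with the argument of Corollary~\ref{ref:elementary:cor} (i.e.\ Theorem~\ref{ref:thetaetale:thm} applied at the smooth point $[\DR]\in\HplusN$). The one point you flag --- replacing $\dim_{[\DM]}\HplusN$ by $\dim\Hom(\IM,\OM)_{\geq 0}$ --- is already established inside the proof of Theorem~\ref{ref:smoothness:thm} (smoothness of $\HplusN$ at $[\DM]$), so your detour through the trivial flag $[\DM\subset\DM]$ is a legitimate but unnecessary extra step.
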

        \goodbreak%
        \begin{proof}[Proof of Theorem~\ref{ref:smoothness:thm}]
            \def\eflag{e_{\mathrm{flag}}}%
            We prove that every obstruction class $\eflag$ obtained in
            Theorem~\ref{ref:obstructionmain:thm} is actually a zero element
            of the obstruction group.
            We have an exact sequence
            \[
                \begin{tikzcd}
                    \Hom(\IM, \OR)_{\geq 0} \ar[r, "\upparrow_{\geq
                    0}"]&\Ext^1(\Ires, \OR)_{\geq 0}
                    \ar[r, "i"] & \Ext^1(\IR, \OR)_{\geq 0} \ar[r, "\nu"]&
                    \Ext^1(\IM, \OR)_{\geq 0}.
                \end{tikzcd}
            \]
            On the one
            hand, since $[\DM]\in \HshortNAmb$ is a smooth $\Gmult$-invariant
            $\kk$-point,
            the completed local ring $A = \hat{\OO}_{\HshortNAmb, [\DM]}$ is
            regular. Also the ring $\hat{\OO}_{\HplusNAmb, [\DM]} = A/(A_{>0})$
            is regular and so the scheme $\HplusNAmb$ is smooth
            at $[\DM]$. There are no obstructions for deforming $[\DM]\in \HplusNAmb$ so the obstruction class
            $\eflag$ lies in
            $\ker\nu$. On the other hand, by surjectivity of
            $\upparrow_{\geq 0}$, the map $i$ is zero, hence $\ker\nu = 0$, so
            $\eflag$ is zero
            and $\HilbFlagplus$ is smooth at $p$ by the infinitesimal lifting
            criterion.

            Since $\righttarrow_{\geq 0}$ is surjective, the tangent map
            $d\pi_{\DR}^+$ is surjective at $p$ so
            the morphism $\pi_{[\DR]}^+$ is smooth at $p$ and $[\DR]\in
            \Hilbdplus$
            is a smooth point, see \cite[Theorem~17.11.1d, p.~83]{ega4-4}.
            It remains to prove Equality~\eqref{eq:dimensioncomparison}.
            The claim follows from the description of tangent spaces in
            Theorem~\ref{ref:positivetangentobstruction:thm}
            and a chase on Diagram~\eqref{eq:diagram}. Indeed, since
            $\righttarrow_{\geq 0}$ and $\upparrow_{\geq 0}$ are surjective,
            we have
            \begin{align*}
                \dim \Hom(\IR, \OR)_{\geq 0} &= \dim \Hom(\Ires, \OR)_{\geq 0} + \dim \Hom(\IM,
                \OR)_{\geq 0} - \dim \Ext^1(\Ires, \OR)\\
                \dim \Hom(\IM, \OM)_{\geq 0} &= \dim \Hom(\IM, \OR)_{\geq 0} + \dim \Hom(\IM,
                \Ires)_{\geq 0}.\qedhere
            \end{align*}
        \end{proof}
        \begin{proof}[Proof of Corollary~\ref{ref:smoothnessHilb:cor}]
            Directly from Theorem~\ref{ref:smoothness:thm} and Corollary~\ref{ref:elementary:cor}.
        \end{proof}
        \begin{remark}\label{ref:kleppe_on_relative:rmk}
            Jan O.~Kleppe, during his investigation of $\Gmult$-invariant deformations of embedded
            schemes using Laudal deformation theory and flag Hilbert schemes in
            \cite[Proposition~4]{Kleppe__Max_Families}
            and~\cite[Theorem~1.3.4]{Kleppe__thesis}, constructed an obstruction space analogous to
            the one from Theorem~\ref{ref:obstructionmain:thm} and derived a
            theorem analogous to Theorem~\ref{ref:smoothness:thm}. Roughly
            speaking, he considers $(\HshortNAmbGmult,
            \righttarrow_0, \upparrow_0)$ instead of $(\HshortNAmb, \righttarrow_{\geq 0},
            \upparrow_{\geq 0})$.
        \end{remark}

        \begin{remark}\label{ex:smallextensions}
            Let us discuss the conditions of
            Corollary~\ref{ref:smoothnessHilb:cor} in a special case.
            Let $\DR \subset \DM \subset \mathbb{A}^n$ be $\Gmult$-invariant
            finite subschemes.
            Let $d$ be the maximal number such that $(\OM)_d \neq 0$ and suppose that
            $\Ires = \IR/\IM$ is concentrated in degree $d$. In this case
            \[
                \Ext^1(\Ires, \OR) \simeq\bigoplus  \Ext^1(\kk[-d], \OR) =
                \bigoplus  \Ext^1(\kk, \OR)[d]
            \]
            is concentrated in negative degrees, so the map
            $\upparrow_{\geq 0}$ is
            automatically surjective. Similarly,
            \[
                \Ext^1(\IM, \Ires)  \simeq \Ext^1(\IM,
                \kk)[-d]  \simeq \Ext^2(\OM, \kk)[-d]  \simeq \Tor_2(\OM,
                \kk)^{\vee}[-d]
            \]
            and this space is negatively graded exactly when there are no second
            syzygies of $\OM$ in degrees $\leq d$. Finally,
            $\DR$ having trivial negative tangents seems to be the most
            subtle assumption and we do not see any interesting sufficient conditions for
            it yet.
        \end{remark}

        \section{Singularities}\label{sec:singularities}

            In this section we prove that equicharacteristic Murphy's Law holds for $\HplusN$ and
            discuss Conjecture~\ref{ref:guar:conjecture}. Here,
            equicharacteristic is used to underline that we work over $\kk$,
            while Vakil~\cite{Vakil_MurphyLaw} works over
            $\mathbb{Z}$.

            Let us recall the main notions. A \emph{pointed scheme} $(X, x)$ is a
            scheme $X$ of finite type over $\kk$ together with a point
            $x\in X$. A \emph{morphism of pointed schemes} $(X, x)\to (Y, y)$ is a
            morphism of schemes $f\colon X\to Y$ such that $f(x) = y$.
            A \emph{retraction} is a pair $i\colon (Y, y)\to (X, x)$ and
            $\pi\colon (X, x)\to (Y, y)$ such that $\pi\circ i =
            \id_{X}$ and $i$ is closed immersion.

            Vakil~\cite{Vakil_MurphyLaw} defines an equivalence relation
            on pointed schemes
            by declaring $(X, x) \sim (Y, y)$ to be equivalent if
            there exists a pointed scheme $(Z, z)$ and smooth morphisms $(X,
            x)\leftarrow (Z, z) \rightarrow (Y, y)$. An equivalence
            class of $\sim$ is called an equicharacteristic \emph{singularity}.
            The equicharacteristic \emph{Murphy's Law holds for $\mathcal{M}$}
            if every equicharacteristic singularity appears on $\mathcal{M}$.

            The key to investigation of singularities of $\HplusN$ is its
            relation with $\HshortNGmult$. Namely,
            there is a functorial retraction $\HplusN \to \HshortNGmult$.
            To construct it, observe that $\HshortNGmult$ represents the functor of
            $\Gmult$-equivariant families $\varphi_0\colon B\to \HshortN$,
            where the $\Gmult$-action on $B$ is trivial. Recall
            from~\eqref{eq:Hilbplus} that $\HplusN$ represents the
            functor
            \[
                \HplusN(B) = \left\{ \varphi\colon \Gmultbar \times B \to
                    \HshortN\ |\ \varphi
                    \mbox{ is } \Gmult\mbox{-equivariant}\right\}.
            \]
            We have a functorial map $i\colon \HshortNGmult \to \HplusN$,
            which sends a family
            $\varphi_0\colon B\to \HshortN$ to $\varphi_0 \circ pr_2 \colon \Gmultbar \times
            B\to \HshortN$ and a functorial map
            $\pi\colon \HplusN \to \HshortNGmult$, which sends a family
            $\varphi\colon \Gmultbar \times B\to \HshortN$ to
            $\varphi|_{\infty \times B}\colon B\to \HshortN$. We have
            $\pi\circ i = \id$ by construction. A
            family $\varphi\colon \Gmultbar \times T \to \HshortN$ is equal to
            $i(\varphi_0)$ if and only if $\varphi|_{1 \times T} =
            \varphi|_{\infty \times T}$. Since $\HshortN$ is separated, $i$ is a closed immersion.
            For every $\kk$-point $[\DR]$ of $\HshortNGmult$, the morphism
            $\pi\colon (\HplusN, [\DR])\to (\HshortNGmult, [\DR])$ induces a
            map of obstruction theories, which is just the projection
            $T^2(R)_{\geq 0} \to T^2(R)_0$.

            \newcommand{\HshortFive}{\Hlong{\pts}{\mathbb{A}^5}}%
            \newcommand{\HplusFive}{\OHilb_{\pts}^{+}(\mathbb{A}^5)}%
            \newcommand{\HshortFiveGmult}{\OHilb_{\pts}^{\Gmult}(\mathbb{A}^5)}%
            \begin{theorem}\label{ref:MurphyForBBstrata:thm}
                The equicharacteristic Murphy's Law
                holds for $\HshortFiveGmult$ and for $\HplusFive$.
            \end{theorem}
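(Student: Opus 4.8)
The plan is to reduce the statement for $\HplusFive$ to the statement for $\HshortFiveGmult$ by means of the pass-to-the-limit retraction $\pi\colon\HplusFive\to\HshortFiveGmult$, which is the key instance of ``finding a smooth locus of the retraction''. Recall from the discussion preceding the theorem that $\pi$ admits a section which is a closed immersion, and that at a $\Gmult$-fixed point $[\DR]$ the morphism $\pi$ induces on tangent spaces the projection $\Hom(\IR,\OR)_{\geq0}\onto\Hom(\IR,\OR)_0$ and on obstruction spaces the projection $T^2(\DR)_{\geq0}\onto T^2(\DR)_0$. The tangent projection is automatically surjective for a homogeneous ideal, and if in addition $T^2(\DR)_{>0}=0$ then the obstruction projection is injective; by the infinitesimal lifting criterion (using that $\HplusFive$ is of finite type over $\kk$) the morphism $\pi$ is then smooth at $[\DR]$. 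Since smoothness is open and open immersions are smooth, this produces, in Vakil's sense, an equivalence $(\HplusFive,[\DR])\sim(\HshortFiveGmult,[\DR])$. It therefore suffices to prove Murphy's Law for $\HshortFiveGmult$ realising each singularity at a point $[\DR]$ which moreover satisfies $T^2(\DR)_{>0}=0$.

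For $\HshortFiveGmult$ we run Erman's cone construction on top of Vakil's result. By \cite{Vakil_MurphyLaw} every equicharacteristic singularity occurs on the Hilbert scheme of some projective scheme $X\subset\mathbb{P}^4$ (chosen projectively normal and of high regularity, e.g. a suitable smooth surface). Let $I_{C_X}\subset\DS=\kk[x_1,\ldots,x_5]$ be the homogeneous ideal of the affine cone $C_X\subset\mathbb{A}^5$ and, for $r\gg0$, let $\DR_r$ be the finite subscheme defined by $I_{C_X}+\DS_{\geq r}$; note that $[\DR_r]$ is $\Gmult$-fixed, hence a point of $\HplusFive$. Following \cite{erman_Murphys_law_for_punctual_Hilb}, for $r$ large the singularity of $\HshortFiveGmult$ at $[\DR_r]$ --- governed by the graded deformation theory of $\DR_r$ --- agrees, up to a smooth factor coming from the degree-$r$ truncation, with the graded deformation theory of the cone $C_X$, which in turn agrees, up to a smooth factor, with the deformation theory of $X$ inside $\mathbb{P}^4$; the requisite intermediate-cohomology vanishings hold for $X$ sufficiently positive and $r$ sufficiently large. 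Hence $(\HshortFiveGmult,[\DR_r])$ carries the prescribed singularity.

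It remains to confirm that the points $\DR_r$ lie in the smooth locus of $\pi$, i.e. that $T^2(\DR_r)_{>0}=0$ once $r\gg0$. The point is that $\IR=I_{C_X}+\DS_{\geq r}$ contains all of $\DS_{\geq r}$, so $(\OR)_{\geq r}=0$; consequently a homomorphism $\IR\to\OR$ of positive degree $j$ sends the degree-$r$ minimal generators of $\IR$ into $(\OR)_{r+j}=0$, hence kills $\DS_{\geq r}$ and factors through the bounded module $I_{C_X}/(I_{C_X})_{\geq r}$ landing in the bounded algebra $\OR$. The same mechanism bounds $\Ext^1(\IR,\OR)_{>0}$ and $T^2(\DR_r)_{>0}$ by bounded pieces of the analogous invariants of $C_X$, which vanish in positive degrees when $X$ is sufficiently positive --- one can argue directly with $T^2$, or use $\Ext^1(I_{C_X},\OO_{C_X})\simeq\Ext^2(\OO_{C_X},\OO_{C_X})$ together with cone--cohomology vanishing to obtain the stronger $\Ext^1(\IR,\OR)_{>0}=0$. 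Taking $r$ large enough to serve both this vanishing and Erman's comparison finishes the argument.

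The main obstacle is precisely this last step: pinning down a usable description of the smooth locus of $\pi$ and checking that the Vakil--Erman truncated cones $\DR_r$ lie in it, i.e. establishing $T^2(\DR_r)_{>0}=0$ (equivalently $\Ext^1(\IR,\OR)_{>0}=0$) for $r\gg0$. Granting this geometric input, the transfer from $\HplusFive$ to $\HshortFiveGmult$ across the smooth morphism $\pi$ is formal, and the statement for $\HshortFiveGmult$ is Erman's comparison combined with Vakil's realisation.
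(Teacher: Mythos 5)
Your two-step architecture --- realize every singularity on $\OHilb_{\pts}^{\Gmult}(\mathbb{A}^5)$ at a point $[\DR]$ whose positive-degree obstructions vanish, then transfer it to $\OHilb_{\pts}^{+}(\mathbb{A}^5)$ by showing the retraction $\pi$ is smooth there --- is exactly the paper's, and your treatment of the transfer step (tangent projection $\Hom(\IR,\OR)_{\geq 0}\to\Hom(\IR,\OR)_0$ surjective, obstruction projection injective once $T^2(\DR)_{>0}=0$, hence $\pi$ smooth by infinitesimal lifting) is correct and matches the paper.

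The gap is where you yourself flag it: the realization step. You route through the affine cone $C_X$ of a Vakil surface $X\subset\mathbb{P}^4$ and the truncation $\IR=I_{C_X}+\DS_{\geq r}$, and both of your key claims --- that the graded deformations of $\DR_r$ agree up to smooth factors with the deformations of $X$ inside $\mathbb{P}^4$, and that $\Ext^1(\IR,\OR)_{>0}=0$ --- are made to rest on $X$ being ``projectively normal, of high regularity, sufficiently positive''. But Vakil's theorem hands you a \emph{specific} surface realizing the prescribed singularity; you are not free to impose positivity on it, and re-embedding it more positively (say by a Veronese) enlarges the ambient projective space, destroying the bound on the embedding dimension, which is precisely what distinguishes this statement from Erman's result on $\coprod_n\OHilb_{\pts}^{\Gmult}(\mathbb{A}^n)$. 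Moreover, for your ideal the vanishing $\Ext^1(\IR,\OR)_{>0}=0$ is a genuine statement about $X$, since $I_{C_X}$ has generators and syzygies in low degrees. The paper sidesteps both problems by a more drastic truncation: by Gotzmann regularity and Haiman--Sturmfels, $\OHilb^{p}(\mathbb{P}^4)$ is \emph{isomorphic as a scheme} to the multigraded Hilbert scheme of pairs $(I_d,I_{d+1})$ with $\DS_1 I_d\subset I_{d+1}$, which is in turn identified with the locus of $\OHilb_{\pts}^{\Gmult}(\mathbb{A}^5)$ of ideals of the form $J=I_d\oplus I_{d+1}\oplus\DS_{\geq d+2}$. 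No cone comparison or cohomological vanishing is required, and since such $J$ is generated in degrees $\geq d$ while $(\DS/J)_{d+2}=0$, the vanishing $\Ext^1(J,\DS/J)_{>0}=0$ holds for trivial degree reasons for \emph{every} Vakil surface. To complete your argument you would have to either prove the cone comparison and the positive-degree vanishing for Vakil's actual surfaces, or replace your $\DR_r$ by a truncation of this kind.
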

            \begin{proof}
                \newcommand{\SingPoint}{\mathfrak{S}}%
                The proof for $\HshortFiveGmult$ is build around the ideas
                of~\cite{erman_Murphys_law_for_punctual_Hilb}, who actually
                proved that Murphy's Law holds for
                $\coprod_n \OHilb_{\pts}^{\Gmult}(\mathbb{A}^n)$. Our
                contribution in this case, if any, is the reduction to
                embedding dimension five.

                Fix an equicharacteristic singularity $\SingPoint$. First,
                by~\cite[M3]{Vakil_MurphyLaw} there is a surface $V \subset
                \mathbb{P}^4$ such that singularity class of the corresponding
                Hilbert scheme of surfaces $(\OHilb(\mathbb{P}^4), [V])$ is
                $\SingPoint$. Let $p = p(t)$ be its Hilbert polynomial and
                $\DS$ be the homogeneous coordinate ring of $\mathbb{P}^4$.
                By Gotzmann Regularity Theorem~\cite[Proposition~4.2]{Haiman_Sturmfels__multigraded}
                there exists a $d$ such that
                $\OHilb^p(\mathbb{P}^4)$ is isomorphic to the multigraded
                Hilbert scheme parameterizing deformations of pairs $(I_d,
                I_{d+1})$ such that
                $I_d \subset \DS_d$, $I_{d+1} \subset \DS_{d+1}$ and $\DS_1
                I_d \subset I_{d+1}$,
                see~\cite[Theorem~3.6 and its
                proof]{Haiman_Sturmfels__multigraded}.
                The isomorphism sends $V$ to $((I_{V})_d,
                (I_{V})_{d+1})$.
                This multigraded Hilbert scheme, in turn, is isomorphic to
                the locus in $\HshortFiveGmult$
                that parameterizes homogeneous ideals with Hilbert
                function $h$ satisfying $h(d-1) = \dim \DS_{d-1}$, $h(d) =
                p(d)$, $h(d+1) = p(d+1)$, and $h(d+2) = 0$. The isomorphism is
                given by sending $(I_d, I_{d+1})$ to the ideal
                \[
                    J = I_d \oplus I_{d+1} \oplus \DS_{\geq d+2}.
                \]
                Let $\DR = \Spec \DS/J$.  We conclude that the singularity
                type of $(\HshortFiveGmult, [\DR])$ is equal to $\SingPoint$.
                It remains to prove that $\pi\colon(\HplusFive, i([\DR]))\to
                (\HshortFiveGmult,
                [\DR])$ is smooth. This is formal. The homogeneous generators
                of $J = \IR$ all have
                degree at least $d$ and $(\OR)_{d+2} = 0$. We deduce that
                $\Ext^{1}(\IR, \OR)_{>0} = 0$. Therefore, the morphism
                $T^2(\DR)_{\geq 0}\to T^2(\DR)_0$ induced by $\pi$ is
                injective. The tangent map $d\pi\colon \Hom(\IR,
                \OR)_{\geq 0} \to \Hom(\IR, \OR)_0$ is clearly surjective.
                Consequently, the map $\pi$ is smooth at $i([\DR])$ by infinitesimal lifting
                property, see
                the proof of Theorem~\ref{ref:thetaetale:thm} or
                \cite[Section~6]{fantechi_Manetti}.
            \end{proof}
            As discussed in the introduction,
            Theorem~\ref{ref:MurphyForBBstrata:thm} does not shed light on the
            singularities of $\HshortFive$. As a caution, we present the following
            example.
            \begin{example}
                Consider $Y = \Spec\kk[x_1, x_2, x_3]$ and $\Gmult$ acting on $Y$ by
                $t\cdot (x_1, x_2, x_3) = (x_1, tx_2, t^{-1}x_3)$. The scheme $Z =
                \Spec \kk[x_1, x_2,x_3]/(x_1^2 - x_2x_3)$ is clearly reduced
                and has an action
                of $\Gmult$, but $Z^{\Gmult} = \Spec \kk[x_1]/(x_1^2)$ and
                $Z^{+} = \Spec \kk[x_1, x_3]/(x_1^2)$ are both non-reduced.
            \end{example}
            However, Theorem~\ref{ref:MurphyForBBstrata:thm} strongly
            suggests that $\HshortFive$ is non-reduced. In contrast,
            Conjecture~\ref{ref:guar:conjecture} implies that
            $\HshortN$
            is non-reduced for large enough $n$.
            \begin{proposition}\label{ref:nonreducednessOfHilbertScheme:prop}
                If Conjecture~\ref{ref:guar:conjecture} is true, then
                the Hilbert scheme of points on some $\mathbb{A}^n$ is non-reduced.
            \end{proposition}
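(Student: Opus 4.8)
We assume Conjecture~\ref{ref:guar:conjecture}. The plan is to combine three ingredients: the equicharacteristic Murphy's Law for $\OHilb_{\pts}^{\Gmult}(\mathbb{A}^{5})$ from Theorem~\ref{ref:MurphyForBBstrata:thm}, which supplies a homogeneous ideal whose local ring on the $\Gmult$-fixed locus is non-reduced; Conjecture~\ref{ref:guar:conjecture} itself, which replaces that ideal by one defining a finite scheme with trivial negative tangents, without changing its singularity type on the fixed locus; and Theorem~\ref{ref:thetaetale:thm}, which upgrades such a pathology of $\HplusN$ at a point with trivial negative tangents to a pathology of $\HshortN$ through the open immersion $\theta$.

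First, fix a non-reduced equicharacteristic singularity; one exists, for instance that of $\Spec\kk[\varepsilon]/(\varepsilon^{2})$, and non-reducedness is an invariant of an equicharacteristic singularity, since a smooth morphism is faithfully flat, hence injective on local rings, while a scheme smooth over a reduced scheme is reduced. By Theorem~\ref{ref:MurphyForBBstrata:thm} (or by Erman's stronger statement for $\coprod_{m}\OHilb_{\pts}^{\Gmult}(\mathbb{A}^{m})$, recalled in its proof) there is a homogeneous ideal $I \subset \DS = \kk[x_{1}, \ldots , x_{5}]$ with $\DS/I$ of finite length, supported at the origin, such that $(\OHilb_{\pts}^{\Gmult}(\mathbb{A}^{5}), [\DS/I])$ has this singularity type; in particular it is non-reduced. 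Let $r_{0}$ be the regularity of $I$ and put $r = r_{0}+2$. Applying Conjecture~\ref{ref:guar:conjecture} to $I$ and $r$ yields an integer $t$, a polynomial ring $\DT$ obtained from $\DS$ by adjoining $t$ new variables (so $\DT$ has $N := 5+t$ variables), and a subspace $L \subset \DT_{r}$ such that the finite subscheme $\DR \subset \mathbb{A}^{N}$ cut out by $\IR = I\DT + L + \DT_{\geq r+1}$ has trivial negative tangents. Since $\IR$ is homogeneous and contains $\DT_{\geq r+1}$, the subscheme $\DR$ is supported at the origin and $[\DR]$ is a $\kk$-point of $\OHilb_{\pts}^{\Gmult}(\mathbb{A}^{N})$. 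As $r \geq r_{0}+2$, the $\Gmult$-invariant deformations of $I$ and of $\IR$ are smoothly equivalent, as noted after Conjecture~\ref{ref:guar:conjecture}; hence $(\OHilb_{\pts}^{\Gmult}(\mathbb{A}^{N}), [\DR])$ carries the same singularity type as $(\OHilb_{\pts}^{\Gmult}(\mathbb{A}^{5}), [\DS/I])$, and in particular is non-reduced.

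It remains to propagate the non-reducedness from $\OHilb_{\pts}^{\Gmult}(\mathbb{A}^{N})$ at $[\DR]$ first to $\OHilb_{\pts}^{+}(\mathbb{A}^{N})$ and then to $\OHilb_{\pts}(\mathbb{A}^{N})$. For the first step, the retraction $\pi\colon \OHilb_{\pts}^{+}(\mathbb{A}^{N}) \to \OHilb_{\pts}^{\Gmult}(\mathbb{A}^{N})$ of Section~\ref{sec:singularities}, together with its section $i$, makes the pullback map $\OO_{\OHilb_{\pts}^{\Gmult}(\mathbb{A}^{N}), [\DR]} \to \OO_{\OHilb_{\pts}^{+}(\mathbb{A}^{N}), [\DR]}$ a split, hence injective, ring homomorphism, so a nonzero nilpotent of the source yields a nonzero nilpotent of the target and $\OHilb_{\pts}^{+}(\mathbb{A}^{N})$ is non-reduced at $[\DR]$. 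For the second step, since $\DR$ has trivial negative tangents, Theorem~\ref{ref:thetaetale:thm} applied with $n = N$ identifies a neighbourhood of $\theta([\DR], 0)$ in $\OHilb_{\pts}(\mathbb{A}^{N})$ with a neighbourhood of $([\DR], 0)$ in $\OHilb_{\pts}^{+}(\mathbb{A}^{N}) \times \mathbb{A}^{N}$; the local ring of the latter at $([\DR], 0)$ is faithfully flat over the non-reduced local ring $\OO_{\OHilb_{\pts}^{+}(\mathbb{A}^{N}), [\DR]}$, in particular contains it, and hence is itself non-reduced, so that $\OHilb_{\pts}(\mathbb{A}^{N})$ is non-reduced at $\theta([\DR], 0)$. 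The one essential use of Conjecture~\ref{ref:guar:conjecture}, and the single step that is a genuine obstacle without it, is producing $\DR$ with trivial negative tangents: this is exactly what lets the pathology leave the closed, nowhere dense locus $\OHilb_{\pts}^{\Gmult}(\mathbb{A}^{N}) \subset \OHilb_{\pts}^{+}(\mathbb{A}^{N})$ and reach an open subset of $\OHilb_{\pts}(\mathbb{A}^{N})$; every remaining step is formal.
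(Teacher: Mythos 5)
Your proposal is correct and follows essentially the same route as the paper's (sketched) proof: extract a non-reduced point of $\OHilb_{\pts}^{\Gmult}(\mathbb{A}^5)$ from Theorem~\ref{ref:MurphyForBBstrata:thm}, use Conjecture~\ref{ref:guar:conjecture} together with the smooth equivalence of $\Gmult$-invariant deformations under truncation to replace it by a point $[\DR]$ with trivial negative tangents and the same singularity type, push the nilpotents to $\OHilb_{\pts}^{+}$ via the split injection coming from the retraction $\pi$, and finally to $\OHilb_{\pts}$ via the open immersion $\theta$ of Theorem~\ref{ref:thetaetale:thm}. The only (immaterial) divergence is that you feed the already-truncated finite-colength ideal into the conjecture, whereas the paper feeds in the ideal generated by $(I_d,I_{d+1})$ and truncates afterwards; your write-up otherwise supplies the flatness and splitting details the paper leaves implicit.
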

            \begin{proof}[Sketch of proof]
                \newcommand{\SingPoint}{\mathfrak{S}}%
                Let $\SingPoint$ be a non-reduced singularity.
                Let $(I_d, I_{d+1})$ be as in the proof of
                Theorem~\ref{ref:MurphyForBBstrata:thm}. Let $I$ be the ideal
                generated by them. Let $r = d+2$. As we assume that
                Conjecture~\ref{ref:guar:conjecture}, we conclude that there
                exists a polynomial ring $\DT \supset \DS$, and a subspace $L
                \subset \DT_r$ such that $I' = I\cdot \DT + L + \DT_{\geq r+1}$
                has trivial negative tangents. One proves
                directly that the $\Gmult$-equivariant deformations of $I'$
                and $I + \DT_{\geq r+1}$ are smoothly equivalent.

                Let $\mathbb{A}^n := \Spec \DT$ and
                $\DR' = \Spec (\DT/I') \subset \mathbb{A}^n$. We scheme
                that $(\HshortNGmult, [\DR'])$ is non-reduced and $\DR'$
                has trivial negative tangents. Since $\pi\colon (\HplusN,
                [\DR'])\to (\HshortNGmult, [\DR])$ is a retraction, also
                $\HplusN$ is non-reduced at $[\DR']$. Since $\DR'$ has trivial
                tangents, the map
                $\theta\colon \HplusN \times \mathbb{A}^n\to \HshortN$ is an
                isomorphism near $[\DR']$, see
                Theorem~\ref{ref:TNTrelevance:intro}. Hence, also $[\DR'] \in \HshortN$ is
                a non-reduced point.
            \end{proof}
            The arxiv version
            (\href{https://arxiv.org/abs/1710.06124v3}{arXiv:1710.06124v3}) of
            this paper contains some observations potentially useful for the
            proof of Conjecture~\ref{ref:guar:conjecture}.

            \section{Examples}\label{sec:examples}

        In this section we describe several examples and prove
        Theorem~\ref{ref:maintheoremexamples:thm}.  This theorem follows from
        Corollary~\ref{ref:smoothnessHilb:cor} once we verify its
        assumptions in our case.
        We keep the notation from introduction: $\DS = \kk[x_1, x_2, y_1,
        y_2]$ and the subscheme $\DR(e) \subset \DM(e)$ is defined by a single form $s$
        so that $\Ires = \kk s$. The ideals $\IM$, $\IR$ are bi-graded
        with respect to
        \begin{equation*}
            \deg\left(x_1^{a_1}x_2^{a_2}y_1^{b_1}y_2^{b_2}\right) = (a_1 + a_2, b_1 +
            b_2),
        \end{equation*}
        so we will speak about forms of given bi-degree. Observe that $\OM$
        has a basis consisting of all monomials of bi-degree $(a,b)$ with
        $a,b<e$.

        \begin{proposition}\label{ref:zachshypo:prop}
            Let $R_1 \subset \Spec \kk[x_1, x_2]$ and $R_2\subset \Spec
            \kk[y_1, y_2]$ be finite schemes.
            Then the subscheme $R_1 \times R_2 \subset \ambient$ is
            smoothable and $[R_1 \times R_2]\in \Hlong{d_1d_2}{\mathbb{A}^4}$
            is a smooth point. In particular $\dim \Hom(I_{R_1 \times R_2},
            \OO_{R_1 \times R_2}) = 4\deg R_1 \deg R_2$.
        \end{proposition}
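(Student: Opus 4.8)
The plan is to exhibit $[R_1\times R_2]$ as a point of the smoothable component of $\Hlong{d_1d_2}{\mathbb{A}^4}$, to compute its tangent space directly from the product structure, and to conclude by comparing dimensions.

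Write $A=\kk[x_1,x_2]$, $B=\kk[y_1,y_2]$, so that $\DS=A\tensor_{\kk}B$; let $I_1\subset A$, $I_2\subset B$ be the ideals of $R_1$, $R_2$, let $\OO_{R_i}$ be the quotient rings, and set $d_i=\deg R_i=\dim_{\kk}\OO_{R_i}$. The ideal of $R_1\times R_2$ in $\DS$ is $I:=I_{R_1\times R_2}=I_1\DS+I_2\DS$, and $\OO_{R_1\times R_2}=\OO_{R_1}\tensor_{\kk}\OO_{R_2}$ has length $d_1d_2$. First I would settle smoothability. By Fogarty~\cite{fogarty} each $\Hlong{d_i}{\mathbb{A}^2}$ is irreducible of dimension $2d_i$, and the locus of reduced subschemes is dense and open in it. The product $\mathcal Z_1\times\mathcal Z_2$ of the two universal families is a flat family of length $d_1d_2$ over $\Hlong{d_1}{\mathbb{A}^2}\times\Hlong{d_2}{\mathbb{A}^2}$, hence induces a morphism $\mu\colon\Hlong{d_1}{\mathbb{A}^2}\times\Hlong{d_2}{\mathbb{A}^2}\to\Hlong{d_1d_2}{\mathbb{A}^4}$ with $\mu(R_1,R_2)=R_1\times R_2$. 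The source is irreducible and $\mu$ sends a pair of reduced subschemes to a reduced ($d_1d_2$-point) subscheme, so $\im\mu$, and in particular $[R_1\times R_2]$, lies in the smoothable component $\mathcal H^{\mathrm{sm}}\subset\Hlong{d_1d_2}{\mathbb{A}^4}$, an irreducible component of dimension $4d_1d_2$.

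Next I would compute the tangent space. Any $\DS$-homomorphism $I\to\OO_{R_1\times R_2}$ annihilates $I^2$, so $\Hom_{\DS}(I,\OO_{R_1\times R_2})=\Hom_{\OO_{R_1\times R_2}}(I/I^2,\OO_{R_1\times R_2})$. The key claim is that the conormal module splits:
\begin{equation*}
    I/I^2\ \simeq\ (I_1/I_1^2)\tensor_{\kk}\OO_{R_2}\ \oplus\ \OO_{R_1}\tensor_{\kk}(I_2/I_2^2).
\end{equation*}
The natural map from the right-hand side onto $I/I^2$ is obviously surjective; injectivity follows from a short computation using $I^2=I_1^2\DS+I_1I_2\DS+I_2^2\DS$ together with $I_1\DS\cap I_2\DS=I_1\tensor_{\kk}I_2=I_1I_2\DS$, the latter being flatness over the field $\kk$. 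Since each $\OO_{R_i}$ is finite free over $\kk$, applying $\Hom_{\OO_{R_1\times R_2}}(-,\OO_{R_1\times R_2})$ to this splitting and using extension of scalars along $\OO_{R_1},\OO_{R_2}\to\OO_{R_1\times R_2}$ gives
\begin{equation*}
    \Hom_{\DS}(I,\OO_{R_1\times R_2})\ \simeq\ \big(N_1\tensor_{\kk}\OO_{R_2}\big)\ \oplus\ \big(\OO_{R_1}\tensor_{\kk}N_2\big),
\end{equation*}
where $N_1=\Hom_A(I_1,\OO_{R_1})$ and $N_2=\Hom_B(I_2,\OO_{R_2})$. By Fogarty's theorem again, $N_i$ is the tangent space $T_{[R_i]}\Hlong{d_i}{\mathbb{A}^2}$, so $\dim_{\kk}N_i=2d_i$; hence $\dim_{\kk}\Hom(I_{R_1\times R_2},\OO_{R_1\times R_2})=2d_1d_2+2d_1d_2=4d_1d_2$.

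Finally, $[R_1\times R_2]\in\mathcal H^{\mathrm{sm}}$ with $\dim_{[R_1\times R_2]}\mathcal H^{\mathrm{sm}}=4d_1d_2=\dim_{\kk}T_{[R_1\times R_2]}\Hlong{d_1d_2}{\mathbb{A}^4}$; since the tangent space dimension equals the local dimension, the local ring is regular, so $[R_1\times R_2]$ is a smooth point, necessarily lying on the single component $\mathcal H^{\mathrm{sm}}$. This proves the smoothability and smoothness assertions as well as the equality $\dim\Hom=4\deg R_1\deg R_2$. The only step demanding genuine care is the splitting of the conormal module $I/I^2$ — equivalently, that the normal sheaf of a product of two finite schemes is the external direct sum of the two normal sheaves; the dimension bookkeeping afterward rests entirely on Fogarty's theorem for Hilbert schemes of points on a smooth surface.
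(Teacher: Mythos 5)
Your proof is correct and follows essentially the same route as the paper: smoothability of $R_1\times R_2$ from smoothability of the factors (Fogarty), plus a tangent-space computation exploiting the product structure, then a dimension count against the $4d_1d_2$-dimensional smoothable component. The only difference is in the level of detail — the paper cites \cite[Proposition~5.12]{jabu_jelisiejew_smoothability} for smoothability of products and states the tangent-space decomposition in one line, whereas you construct the comparison morphism from $\Hlong{d_1}{\mathbb{A}^2}\times\Hlong{d_2}{\mathbb{A}^2}$ explicitly and prove the splitting of the conormal module $I/I^2$, which is a welcome elaboration rather than a different method.
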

        \begin{proof}
            \def\cX{\mathcal{X}}%
            Let $d_i = \deg R_i$ for $i=1,2$.
            The Hilbert schemes $\Hlong{d_1}{\mathbb{A}^2}$ and $\Hlong{d_2}{\mathbb{A}^2}$ are irreducible~\cite{fogarty}
            so both $R_1$ and $R_2$ are smoothable. Consequently, $R_1 \times R_2$ is smoothable
            \cite[Proposition~5.12]{jabu_jelisiejew_smoothability}.
            Since $T_{\Hlong{d_1d_2}{\mathbb{A}^4}, [R_1 \times R_2]} =
            \bigoplus_i T_{\Hlong{d_i}{\mathbb{A}^2}, [R_i]}$, the point $[R_1
            \times R_2]$ is smooth.
        \end{proof}
        \begin{corollary}\label{ref:Msmoothexample:cor}
            For all $e\geq 2$, the subscheme $\DM(e)$ is smoothable and
            $[\DM(e)]\in \Hlong{\pts}{\mathbb{A}^n}$ is a smooth point.
        \end{corollary}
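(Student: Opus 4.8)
The plan is to recognize $\DM(e)$ as a product of two fat points in the plane and then invoke Proposition~\ref{ref:zachshypo:prop}. Writing $\DS = \kk[x_1,x_2]\otimes_\kk \kk[y_1,y_2]$, let $R_1 \subset \Spec\kk[x_1,x_2]$ and $R_2 \subset \Spec\kk[y_1,y_2]$ be the finite subschemes defined by $(x_1,x_2)^e$ and $(y_1,y_2)^e$ respectively. The ideal of the product $R_1\times R_2 \subset \mathbb{A}^4$ is $(x_1,x_2)^e\cdot\DS + (y_1,y_2)^e\cdot \DS = (x_1,x_2)^e + (y_1,y_2)^e$, which is exactly $I_{\DM(e)}$; hence $\DM(e) = R_1\times R_2$ as subschemes of $\mathbb{A}^4$.

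Each $R_i$ is a finite subscheme of $\mathbb{A}^2$ of degree $\dim_\kk \kk[x_1,x_2]/(x_1,x_2)^e = 1 + 2 + \cdots + e = \binom{e+1}{2}$. Applying Proposition~\ref{ref:zachshypo:prop} to the pair $(R_1, R_2)$, we conclude that $\DM(e) = R_1\times R_2$ is smoothable and that $[\DM(e)]$ is a smooth point of $\Hlong{\pts}{\mathbb{A}^4}$; moreover $\dim \Hom(I_{\DM(e)}, \OO_{\DM(e)}) = 4\deg R_1\deg R_2 = 4\binom{e+1}{2}^2 = 4\deg\DM(e)$. This last value will be used in the proof of Theorem~\ref{ref:maintheoremexamples:thm} when comparing $\dim\compo(e)$ with $4\deg\DR(e)$.

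There is essentially no obstacle here: the entire content is the identification $I_{\DM(e)} = I_{R_1}\cdot\DS + I_{R_2}\cdot \DS$, immediate from the definition of the ideal, after which the statement is a special case of Proposition~\ref{ref:zachshypo:prop}. The hypothesis $e\geq 2$ plays no role beyond excluding the trivial case $e=1$, where $\DM(1)$ is the reduced origin.
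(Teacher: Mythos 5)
Your proof is correct and is essentially identical to the paper's: the paper's entire proof of this corollary is the one-line application of Proposition~\ref{ref:zachshypo:prop} to the factorization $\DM(e) = \Spec\left(\kk[x_1,x_2]/(x_1,x_2)^e\right) \times \Spec\left(\kk[y_1,y_2]/(y_1,y_2)^e\right)$. Your additional verification that $I_{\DM(e)}$ is the ideal of the product and the degree count $\binom{e+1}{2}$ are accurate and harmless elaborations.
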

        \begin{proof}
            Apply Proposition~\ref{ref:zachshypo:prop} to
            $\DM(e) = \Spec \left(\kk[x_1, x_2]/(x_1, x_2)^e\right) \times \Spec
            \left(\kk[y_1, y_2]/(y_1, y_2)^e\right)$.
        \end{proof}

        We proceed to show that $\uppleftarrow$ and $\righttarrow$ from
        Diagram~\eqref{eq:diagram} are surjective, in all degrees, for $\DM = \DM(e)$ and $\DR =
        \DR(e)$. Recall that $\Ires  \simeq  \kk$, or, taking
        into account the grading, $\Ires  \simeq \kk[-(2e-2)]$.

        \begin{proposition}\label{ref:uppleftarrowexample:prop}
            In Diagram~\eqref{eq:diagram} applied to $\DR(e)
            \subset \DM(e)$, the homomorphism $\uppleftarrow$ is surjective.
        \end{proposition}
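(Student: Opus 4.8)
The plan is to compute $\uppleftarrow$ explicitly and prove it is onto. Recall that $\uppleftarrow$ is the connecting homomorphism obtained by applying $\Hom(-,\OM)$ to $0\to\IM\to\IR\to\Ires\to0$: fixing a lift $\tilde s\in\DS$ of $s$ we have $\IR=\IM+\DS\tilde s$ and $\DS_+\tilde s\subset\IM$ (because $\Ires\simeq\kk$), so every $\varphi\in\Hom(\IM,\OM)$ yields a $\DS$-linear map $\DS_+\to\OM$, $f\mapsto\varphi(f\tilde s)$, and $\uppleftarrow(\varphi)$ is its class in $\Ext^1_\DS(\kk,\OM)\simeq H^1(\Hom_\DS(K_\bullet,\OM))$ (up to the shift $\Ires\simeq\kk[-(2e-2)]$), where $K_\bullet$ is the Koszul complex on $x_1,x_2,y_1,y_2$. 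I will use the product structure $\DS=\DS'\otimes_\kk\DS''$, $\DS'=\kk[x_1,x_2]$, $\DS''=\kk[y_1,y_2]$, so that $\OM=A\otimes_\kk B$ with $A=\DS'/I_1$, $B=\DS''/I_2$, $I_1=(x_1,x_2)^e$, $I_2=(y_1,y_2)^e$. Here $\IM=I_1\DS+I_2\DS$ is minimally generated by the degree-$e$ monomials $g^x_k:=x_1^kx_2^{e-k}$ and $g^y_l:=y_1^ly_2^{e-l}$ ($0\le k,l\le e$), all of which vanish in $\OM$; hence the mixed Koszul syzygies $g^y_l e^x_k-g^x_k e^y_l$ impose no condition on a homomorphism $\IM\to\OM$, and $\Hom(\IM,\OM)=\Hom_{\DS'}(I_1,\OM)\oplus\Hom_{\DS''}(I_2,\OM)$, the summands being the homomorphisms supported on the $g^x$- resp.\ $g^y$-generators (the two images meet trivially and their dimensions add to $\dim\Hom(\IM,\OM)=4\deg R_1\deg R_2$ by Proposition~\ref{ref:zachshypo:prop}).

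Writing $\tilde s=\sum_{i,j}c_{ij}x_1^ix_2^{e-1-i}y_1^jy_2^{e-1-j}$ and $P_i(y):=\sum_jc_{ij}y_1^jy_2^{e-1-j}$, $Q_j(x):=\sum_ic_{ij}x_1^ix_2^{e-1-i}$, one gets $x_1\tilde s=\sum_iP_i(y)g^x_{i+1}$, $x_2\tilde s=\sum_iP_i(y)g^x_i$ and the symmetric identities for $y_1\tilde s,y_2\tilde s$. Thus for $\varphi\in\Hom_{\DS'}(I_1,\OM)$ the cocycle $\uppleftarrow(\varphi)$ sends $x_1\mapsto\sum_iP_i(y)\varphi(g^x_{i+1})$, $x_2\mapsto\sum_iP_i(y)\varphi(g^x_i)$, $y_1,y_2\mapsto0$. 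Since $\Hom_{\DS'}(I_1,\OM)=\Hom_{\DS'}(I_1,A)\otimes_\kk B$ and $P_i\in B_{e-1}=\operatorname{soc}B$ annihilates $B_{\ge1}$, only the ``$B$-constant part'' $\varphi_0\in\Hom_{\DS'}(I_1,A)$ of $\varphi$ contributes, so $\uppleftarrow(\varphi)$ is determined by $\varphi_0$. By the Künneth decomposition of Koszul cohomology over $\DS=\DS'\otimes\DS''$,
\[
  \Ext^1_\DS(\kk,\OM)=\bigl(\Ext^1_{\DS'}(\kk,A)\otimes\operatorname{soc}B\bigr)\ \oplus\ \bigl(\operatorname{soc}A\otimes\Ext^1_{\DS''}(\kk,B)\bigr),
\]
the image of $\uppleftarrow|_{\Hom_{\DS'}(I_1,\OM)}$ lies in the first summand and that of $\uppleftarrow|_{\Hom_{\DS''}(I_2,\OM)}$ in the second; as these are complementary it suffices to treat one. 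Here generality of $s$ is used crucially: since $[c_{ij}]$ is invertible, $\{P_i\}_{i=0}^{e-1}$ is a basis of $\operatorname{soc}B$ (and $\{Q_j\}$ of $\operatorname{soc}A$), which identifies the first summand with $\bigl(\Ext^1_{\DS'}(\kk,A)\bigr)^{\oplus e}$ and reduces the claim to surjectivity of
\[
  \Theta\colon\Hom_{\DS'}(I_1,A)\to\bigl(\Ext^1_{\DS'}(\kk,A)\bigr)^{\oplus e},\qquad \varphi_0\mapsto\Bigl(\bigl[(\varphi_0(g^x_{i+1}),\varphi_0(g^x_i))\bigr]\Bigr)_{i=0}^{e-1}.
\]

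What remains is a computation in the two-variable ring $\DS'$. Both sides of $\Theta$ have dimension $e(e+1)$: the domain because $\Hom_{\DS'}(I_1,A)$ is the tangent space at $[R_1]$ to $\Hlong{\binom{e+1}{2}}{\mathbb{A}^2}$, which is smooth of dimension $2\binom{e+1}{2}$ by~\cite{fogarty}; the codomain because an Euler-characteristic count on the Koszul complex gives $\dim\Ext^1_{\DS'}(\kk,A)=\dim\operatorname{soc}A+\dim\bigl(A/(\DS')_+A\bigr)=e+1$. So it is enough to prove $\Theta$ injective. First one checks $\Hom_{\DS'}(I_1,A)=(\operatorname{soc}A)^{e+1}$ (the inclusion $\supseteq$ is clear as socle elements are killed by $x_1,x_2$, and dimensions match), so with $u_k:=\varphi_0(g^x_k)\in A_{e-1}$ the map $\Theta$ becomes the linear map $(A_{e-1})^{e+1}\to(A_{e-1}^2/C)^{\oplus e}$, $C$ the span of the $e-1$ Koszul coboundaries $(x_1\mu,x_2\mu)$, $\mu\in(\DS')_{e-2}$. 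In the monomial basis each coboundary generator omits $x_2^{e-1}$ from its first coordinate and $x_1^{e-1}$ from its second; propagating these constraints through the overlaps of consecutive windows $(u_{i+1},u_i)$ forces all coefficients, hence all $u_k$, to vanish. An entirely symmetric argument handles $\uppleftarrow|_{\Hom_{\DS''}(I_2,\OM)}$.

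I expect this last injectivity --- the ``cascade'' collapsing the chain of coboundary conditions --- to be the main obstacle; it is elementary but needs careful bookkeeping of which monomials of $A_{e-1}$ appear where. The secondary points needing care are the direct-sum decomposition of $\Hom(\IM,\OM)$ via the vanishing of the mixed syzygies, and the bidegree accounting in the Künneth step together with the claim that $\uppleftarrow$ of the $I_1$-part does not spill into the second Künneth summand (which rests on socle elements of $B$ being annihilated by $y_1,y_2$).
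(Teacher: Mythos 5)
Your argument is correct, but it follows a genuinely different route from the paper's. The paper does not compute $\uppleftarrow$ at all: it observes that $\Ext^1(\Ires,\OM)$ is concentrated in degree $-1$ with $\dim\Hom(\IM,\OM)_{-1}=\dim\Ext^1(\Ires,\OM)_{-1}=2e(e+1)$, and then proves the vanishing $\Hom(\IR,\OM)_{-1}=0$; by exactness of the relevant column of Diagram~\eqref{eq:diagram} this makes $\uppleftarrow$ injective, hence bijective, in degree $-1$, hence surjective. There the generality of $s$ enters when a putative $\varphi\in\Hom(\IR,\OM)_{-1}$ is extended from $(x_1,x_2)^e$ to $(x_1,x_2)^{e-1}$ using that the forms $f_j$ span $\kk[x_1,x_2]_{e-1}$. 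You instead make the connecting map explicit on Koszul cocycles, split source and target along the product structure $\OM=A\otimes B$ (tensor-product resolution for the source, K\"unneth for the target), and reduce to injectivity of an explicit map $\Theta$ in two variables, again closing with a dimension count; generality of $s$ enters through $\{P_i\}$ being a basis of the socle of $B$. Both proofs are at bottom ``injectivity plus equal dimensions''; yours is longer but more structural --- it identifies exactly which K\"unneth summand each half of $\Hom(\IM,\OM)$ surjects onto and makes the role of the invertibility of $[c_{ij}]$ transparent --- while the paper's is shorter and reuses the long exact sequence directly. For the record, your final ``cascade'' does close: writing $u_k=\sum_m a_{k,m}x_1^mx_2^{e-1-m}$, the condition $(u_{i+1},u_i)=(x_1\mu_i,x_2\mu_i)$ is equivalent to $a_{i+1,0}=0$, $a_{i,e-1}=0$, and $a_{i+1,m}=a_{i,m-1}$ for $1\leq m\leq e-1$; iterating the shift relation drives every coefficient either to some $a_{j,e-1}$ with $j\leq e-1$ or to some $a_{j,0}$ with $j\geq 1$, all of which vanish, so $\Theta$ is injective as you predicted. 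Note only that the two boundary conditions alone would give far too few constraints; it is the shared $\mu_i$ (your ``overlaps'') that supplies the shift relation, so that step should be written out explicitly in a final version.
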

        \begin{proof}
            We abbreviate $\DM(e)$ and $\DR(e)$ to $\DM$ and $\DR$
            respectively.
            Since $\DM$ is monomial, it is straightforward to compute that
            $\Ext^1(\Ires, \OM)$ is concentrated in degree $-1$ and
            that
            \[
                \dim \Hom(\IM, \OM)_{-1} = \dim \Ext^1(\Ires, \OM)_{-1} = 2e(e+1).
            \]
            It is enough to show that $\Hom(\IR, \OM)_{-1}$ is
            zero. Let $s = \sum_{j} f_{j}y_1^{j}y_{2}^{e-1-j}$, where $f_{j} =
            \sum_i c_{ij} x_1^{i}x_2^{e-1-i}$, be the form defining $\DR(e)$
            inside $\DM(e)$, as in the introduction.
            Pick $\varphi\in \Hom(\IR, \OM)_{-1}$.
            The element $t := \varphi(s)$ of $\OM$ has degree $2e-3$, so it is
            uniquely written as $t_1 + t_2$ where $t_1, t_2\in \OM$
            have bi-degree $(e-2, e-1)$ and $(e-1, e-2)$ respectively.
            Write $t_1 = \sum_j g_j y_{1}^jy_2^{e-1-j}$, where $g_{j}\in
            \kk[x_1, x_2]_{e-1}$.
            Consider the equation
            \[
                x_1t_1 = x_1t = \varphi(x_1 s) = \sum_{j}
                \varphi(x_1f_{j})y_1^{j}y_{2}^{e-1-j}.
            \]
            The element $x_1f_{j}\in \kk[x_1, x_2]$ is a form of degree $e$, thus
            $x_1f_j\in \IM$. Analysing $\varphi|_{\IM}\in \Hom(\IM, \OM)_{-1}$ directly, we
            see that $\varphi(x_1f_{j})$ lies in the image $\kk[x_1, x_2]$ and so
            it is a form of degree $(e-1, 0)$. Therefore $x_1t_1$ and
            $\sum_{j} y_1^{j}y_{2}^{e-1-j}\varphi(x_1f_{j})$ are two forms of
            bi-degree $(e-1, e-1)$ equal modulo $\IM$.
            Comparing their coefficients next to $y_{1}^jy_2^{e-1-j}$, we see
            that $\varphi(x_1f_j) = x_1g_j\in \OM$.
            The same argument shows that $\varphi(x_2f_j) = x_2g_j$.

            Restrict $\varphi$ to a homomorphism $\varphi'\colon (x_1, x_2)^e\to
            \OM$ and extend $\varphi'$ to a degree minus one homomorphism $\varphi'\colon (x_1, x_2)^{e-1}
            \to \OM$ by imposing, for every $\lambda_{\bullet}\in \kk$, the
            condition
            \begin{equation*}
                \varphi'\left(\sum \lambda_j f_j\right) = \sum \lambda_j g_j.
            \end{equation*}
            The syzygies of $(x_1, x_2)^{e-1}$ are
            linear, so the map $\varphi'$ sends them to forms of degree $e-1$. No such form
            lies
            in $\IM$, thus $\varphi'$ lifts to an element of $\Hom((x_1,
            x_2)^{e-1}, \DS)_{-1}$. But $\Hom((x_1,
            x_2)^{e-1}, \DS)_{-1} = 0$ and so $\varphi' = 0$. Therefore, $\varphi( (x_1,
            x_2)^e) = \varphi'( (x_1, x_2)^e) = 0$. Repeating the argument
            with $y_i$ interchanged with $x_i$, we obtain $\varphi( (y_1,
            y_2)^e) = 0$, so $\varphi = 0$.
        \end{proof}

        \begin{proposition}\label{ref:righttarrowexample:prop}
            In Diagram~\eqref{eq:diagram} applied to $\DR(e)
            \subset \DM(e)$, the homomorphism $\righttarrow$ is surjective.
        \end{proposition}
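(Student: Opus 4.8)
The plan is to show that $\righttarrow$ is surjective by proving that the connecting homomorphism $\delta\colon\Hom(\IM,\OR)\to\Ext^1(\IM,\Ires)$ of the third row of Diagram~\eqref{eq:diagram}, associated to $0\to\Ires\to\OM\to\OR\to 0$, is zero; equivalently, that every $\DS$-module homomorphism $\psi\colon\IM\to\OR$ lifts along $\OM\onto\OR$. Since $\IM$ and $\OM$ are bi-graded and $\Ires=\kk s$ is concentrated in bi-degree $(e-1,e-1)$, it suffices to treat bihomogeneous $\psi$.

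As input I would use the minimal free resolution of $\IM$. Because $\OM$ is the $\kk$-tensor product of $\kk[x_1,x_2]/(x_1,x_2)^e$ and $\kk[y_1,y_2]/(y_1,y_2)^e$, each with a linear resolution, a direct computation (or the Künneth formula for the tensor product of the two factor resolutions) shows that $\IM$ is minimally generated by the $2(e+1)$ monomials $g_i=x_1^ix_2^{e-i}$ and $h_j=y_1^jy_2^{e-j}$ of bi-degrees $(e,0)$ and $(0,e)$, and that its first syzygies are minimally generated by the $e$ linear syzygies among the $g_i$, the $e$ linear syzygies among the $h_j$, and the $(e+1)^2$ Koszul syzygies $h_jg_i-g_ih_j$, of bi-degrees $(e+1,0)$, $(0,e+1)$ and $(e,e)$ respectively.

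Now comes the bi-degree bookkeeping. Given a bihomogeneous $\psi$ of bi-degree $(p,q)$, choose any lift $\widetilde\psi$ of $\psi$ on the generators; then $\delta(\psi)$ is represented by the cocycle sending each minimal syzygy to its defect (the corresponding combination of the $\widetilde\psi$-images), an element of $\ker(\OM\to\OR)=\kk s$ of bi-degree $\sigma+(p,q)$, where $\sigma$ denotes the bi-degree of that syzygy; this cocycle is already independent of the chosen lift because multiplying $s$ by any positive-degree form is zero in $\OM$. As $\kk s$ lives in bi-degree $(e-1,e-1)$, the defect on a syzygy of bi-degree $\sigma$ can be nonzero only if $(p,q)=(e-1,e-1)-\sigma$, so $\delta(\psi)=0$ unless $(p,q)\in\{(-2,e-1),(e-1,-2),(-1,-1)\}$. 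For $(p,q)=(-1,-1)$ one has $\psi(g_i)\in(\OR)_{(e-1,-1)}=0$ and $\psi(h_j)\in(\OR)_{(-1,e-1)}=0$, hence $\psi=0$; and $(e-1,-2)$ is the mirror image (exchange the $x$'s and $y$'s) of $(-2,e-1)$.

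The single remaining case $(p,q)=(-2,e-1)$ is, as in the proof of Proposition~\ref{ref:uppleftarrowexample:prop}, where the work sits and where the genericity of $s$ is used. Here $\psi(h_j)\in(\OR)_{(-2,2e-1)}=0$, so $\psi$ is determined by $u_i:=\psi(g_i)\in(\OR)_{(e-2,e-1)}=(\OM)_{(e-2,e-1)}$, and the only syzygies with a possibly nonzero defect are the linear ones among the $g_i$, giving $x_1u_i-x_2u_{i+1}=\lambda_is$ in $\OM$ for scalars $\lambda_0,\dots,\lambda_{e-1}$; the goal is $\lambda_i=0$ for all $i$. Writing elements of $\kk[x_1,x_2]_{e-1}\otimes\kk[y_1,y_2]_{e-1}$ in the $\kk[x_1,x_2]$-monomial basis with coordinates in $W:=\kk[y_1,y_2]_{e-1}$, put $s=(s_0,\dots,s_{e-1})$ and $u_i=(a_{i,0},\dots,a_{i,e-2})$; the relations read $a_{i,0}=\lambda_is_0$, $a_{i,k}-a_{i+1,k-1}=\lambda_is_k$ for $1\le k\le e-2$, and $a_{i+1,e-2}=-\lambda_is_{e-1}$. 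A one-line induction gives $a_{i,k}=\sum_{l=0}^k\lambda_{i+k-l}s_l$ whenever $i+k\le e-1$; evaluating this at $(i,k)=(1,e-2)$ and comparing with $a_{1,e-2}=-\lambda_0s_{e-1}$ yields $\sum_{m=0}^{e-1}\lambda_ms_{e-1-m}=0$. By construction $s$ general means $[c_{ij}]$ is invertible, which is precisely the statement that $s_0,\dots,s_{e-1}$ form a basis of $W$; hence every $\lambda_m=0$, so $\delta(\psi)=0$ and $\righttarrow$ is surjective. The main obstacle is isolating this one bi-degree and running the chain computation in it; the rest is bookkeeping mirroring the preceding proposition.
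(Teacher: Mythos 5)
Your proposal is correct and follows essentially the same route as the paper: both reduce by bi-degree bookkeeping to homomorphisms of bi-degree $(-2,e-1)$ (your observation that the Koszul syzygies between the $x$- and $y$-blocks contribute nothing is exactly the paper's reduction from $\IM$ to $N_0=(x_1,x_2)^e$), both extract the relations $x_1u_i-x_2u_{i+1}=\lambda_i s$ from the linear syzygies, and both use the invertibility of $[c_{ij}]$ to force all $\lambda_i=0$. Your endgame---telescoping the recursion into the single relation $\sum_m\lambda_m s_{e-1-m}=0$ in $\kk[y_1,y_2]_{e-1}$ and invoking linear independence of the $s_l$---is a tidier packaging of the paper's coefficient-extraction and monomial-multiplication argument, but it is the same computation.
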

        \begin{proof}
            We abbreviate $\DM(e)$ and $\DR(e)$ to $\DM$ and $\DR$
            respectively. We begin with a
            series of reductions. Let $N = (x_1, x_2)^{e} \oplus (y_1,
            y_2)^{e}$, with the surjection $N\to \IM$.
            We have $\Hom(N, \OM)  \simeq \Hom(\IM, \OM)$ and $\Hom(N, \OR)
            \simeq \Hom(\IM, \OR)$ so
            it is enough to show that $\Hom(N, \OM)\to \Hom(N, \OR)$ is
            surjective. Second, it is enough to show that for $N_0 =
            (x_1, x_2)^e$ the map
            \[
                \Phi:\Hom(N_0, \OM) \to \Hom(N_0, \OR)
            \]
            is surjective. Third, the map $\Phi$ preserves bi-degree, so we may restrict to
            homomorphisms of given bi-degree. Fourth, the generators of syzygies of $N_0$ are linear of
            bi-degree $(1, 0)$, the modules $\OM$ and $\OR$ differ only in bi-degree
            $(e-1,e-1)$ and $N_0$ is generated in bi-degree $(e, 0)$. If we consider
            homomorphisms of bi-degree $(d_1, d_2)\neq (-2, e-1)$ then the syzygies of
            $N_0$ are mapped into degree $(e, 0) + (d_1, d_2) + (1, 0) \neq (e-1,
            e-1)$ and the map $\Phi$ is an isomorphism. Hence, we restrict to homomorphisms of
            bi-degree $(-2, e-1)$. Each such homomorphism sends generators of $N_0$ to elements of
            bi-degree $(e-2, e-1)$.

            Pick a homomorphism $\varphi\in \Hom(N_0, \OR)$ of bi-degree $(-2, e-1)$.
            For each $0\leq i\leq e$ the element
            $\varphi(x_1^{e-i}x_2^{i})$ is a form of bi-degree $(e-2, e-1)$ so it can be
            uniquely lifted to a form $\varphi_i\in \DS$ of bi-degree $(e-2, e-1)$.
            Recall that $\IR = \IM + \kk s$.
            Since $\varphi$ is a homomorphism to $\OR$, the syzygies between elements
            of $N_0$ give the following relations between forms of bi-degree
            $(e-1, e-1)$:
            \begin{equation}\label{eq:syzygies}
                x_1\varphi_{i+1} - x_2 \varphi_{i} = \lambda_i s \mod \IM\quad i=0,1,
                \ldots ,e-1.
            \end{equation}
            To prove that $\varphi$ is in the image of $\Phi$ it is enough to prove that
            $\lambda_i = 0$ for all $i$.
            Since $s$ is general, for appropriate choice of a basis $f_0,
            \ldots ,f_{e-1}$ of $\kk[y_1, y_2]_{e-1}$ we have
            \[
                s = x_2^{e-1}f_0 + x_{2}^{e-2}x_1f_1 +  \ldots + x_{1}^{e-1}f_{e-1}.
            \]
            \def\ind{{k}}%
            Fix $\ind\in \{0, \ldots ,e\}$. Both sides of
            each Equation~\eqref{eq:syzygies} have the form $\sum_j r_j f_j$, where
            $r_j$ belong to the image of $\kk[x_0, x_1]_{e-1}$ in $\OM$. Extracting coefficients of
            $f_{\ind}$ from both sides, we obtain equalities in $\kk[x_1,x_2]$:
            \begin{equation}\label{eq:syzygiesrest}
                x_1\tau_{i+1} - x_2 \tau_{i} = \lambda_i x_1^{e-1-\ind}x_2^{\ind} \mod
                (x_1, x_2)^e\mbox{ for } i=0, \ldots ,e,
            \end{equation}
            where $\tau_i$ is the coefficient of $f_{\ind}$ in $\varphi_i$.
            Let $m_0 :=
            x_1^{\ind}x_{2}^{e-1-\ind}$. We have $m_0\cdot (x_1, x_2)^e
            \subset (x_1^e, x_2^e)$, hence
            Equation~\eqref{eq:syzygiesrest}
            for $i = \ind$ multiplied by $m_0$ gives
            \begin{equation}\label{eq:specialindex}
                x_1^{\ind+1}x_2^{e-1-\ind}\tau_{\ind+1} -
                x_1^{\ind}x_2^{e-\ind}\tau_{\ind} =
                \lambda_{\ind}x_1^{e-1}x_{2}^{e-1}\mod (x_1^e, x_2^e).
            \end{equation}
            For all monomials $m\in \kk[x_1, x_2]_{e-1}$ different from
            $m_0$ we have $x_1^{\ind}x_2^{e-1-\ind}\cdot m \in (x_1^e, x_2^e)$.
            Multiplying Equation~\eqref{eq:syzygiesrest} for $i=\ind-1$ by the
            monomial $x_1^{\ind-1}x_2^{e-\ind}$, we obtain
            \[
                x_1^{\ind}x_2^{e-\ind}\tau_{\ind} -
                x_1^{\ind-1}x_2^{e-\ind+1}\tau_{\ind-1} =
                x_1^{\ind-1}x_2^{e-\ind}\cdot \lambda_{k-1} x_1^{e-1-\ind}x_2^{\ind}
                = \lambda_k x_1^{e-2}x_2^e = 0\mod
                (x_1^e, x_2^e).
            \]
            Similarly, Equations~\eqref{eq:syzygiesrest} for $i=\ind-1,\ind-2,
            \ldots $ and $i=\ind+1,\ind+2, \ldots,$ give
            \begin{align}\label{eq:godownindexes}
                x_1^{\ind}x_2^{e-\ind}\tau_{\ind} &=
                x_1^{\ind-1}x_2^{e-\ind+1}\tau_{\ind-1} = \ldots
                =x_2^{e}\tau_{0} = 0\mod (x_1^e, x_2^e).\\
                \label{eq:goupindexes}
                x_1^{\ind+1}x_2^{e-1-\ind}\tau_{\ind+1} &=
                x_1^{\ind+2}x_2^{e-2-\ind}\tau_{\ind+2} =  \ldots =
                x_1^e\tau_{e} = 0 \mod (x_1^e, x_2^e).
            \end{align}
            Together,
            Equations~\eqref{eq:specialindex},~\eqref{eq:godownindexes}~\eqref{eq:goupindexes}
            imply that $0 = \lambda_{\ind}x_1^{e-1}x_{2}^{e-1}\mod (x_1^e,
            x_2^e)$,
            so $\lambda_{\ind} = 0$. Since $\ind$ is arbitrary, we have
            $\lambda_i = 0$ for all $i$. As a result, the map $\varphi$ lifts to $\Hom(\IM, \OM)$ and the claim
            follows.
        \end{proof}

        \begin{proof}[Proof of Theorem~\ref{ref:maintheoremexamples:thm}]
            By Remark~\ref{ex:smallextensions},
            Corollary~\ref{ref:Msmoothexample:cor}, and
            Proposition~\ref{ref:righttarrowexample:prop} the
            assumptions of Theorem~\ref{ref:smoothness:thm} are satisfied for
            $\DM := \DM(e)$ and $\DR := \DR(e)$.
            A chase on Diagram~\eqref{eq:diagram}, taking into account
            surjectivity of $\uppleftarrow$ and $\righttarrow$, shows that
            \begin{align*}
                \Hom(\IR, \OR)_{<0} &= \ker\left( \Hom(\IM, \OR)_{<0} \to
                \Ext^1(\Ires, \OR)_{<0} \right)\\ &= \ker\left( \Hom(\IM,
                \OM)_{<0} \to
                \Ext^1(\Ires, \OR)_{<0} \right) = \Ext^1(\Ires, \Ires)_{<0}
                \simeq \Ext^1(\kk, \kk)_{<0}.
            \end{align*}
            As a result, we obtain $\dim_{\kk}\Hom(\IR, \OR)_{<0} = n$. The
            ideal $\IR$ is homogeneous, so by Lemma~\ref{ref:independence:cor}
            the subscheme $\DR(e)$ has trivial
            negative tangents.
            Corollary~\ref{ref:smoothnessHilb:cor} implies that
            $\compo(e)$ is elementary.
            Formula~\eqref{eq:dimensioncomparisonsnd} yields
            \[\dim \compo(e) = 4 + (4\deg \DM - 2e(e+1)) - 0 + (e^2 - 1)  -
                (2e+2) = 4\deg \DR
                - (e-1)(e+5).\qedhere
            \]
        \end{proof}

        \begin{remark}\label{ref:rationalprops:rmk}
            \def\Zflag{\compo_{\mathrm{flag}}}%
            \def\Zflagzero{\compo_{\mathrm{flag}}^{+}}%
            In the setting of Theorem~\ref{ref:maintheoremexamples:thm},
            denote by $\Zflag$ the component of $\HilbFlagplus$ containing $p =
            [\DR \subset \DM]$.
            By Remark~\ref{ex:smallextensions}, we have $\Ext^1(\Ires,
            \OR)_{\geq 0} = 0$ so $\Hom(\IR, \OR)_{\geq 0} \to \Hom(\IM,
            \OR)_{\geq 0}$ is
            surjective, so the tangent map of $\pi_{\DM}\colon\Zflag \to \im
            \pi_{\DM} \subset \Hilbmplus$ is
            surjective at the point $p$.
            Counting dimensions, we see that the general fiber of $\pi_{\DM}$
            is $(e^2-1)$-dimensional, so $\Zflag$ is dominated by a family
            of
            $\mathbb{P}^{e^2-1}$ and also $\compo(e)$ is dominated by such a
            family.
        \end{remark}

        Our proof of Theorem~\ref{ref:maintheoremexamples:thm} does not give
        an explicit description of the components $\compo(e)$. Below we
        describe $\compo(2)$ and $\compo(3)$, with
        their reduced scheme structure.
        \begin{example}\label{ex:exampletwo}
            The component $\compo(2)$ was discovered
            in~\cite[Section~2.2]{emsalem_iarrobino_small_tangent_space}.
            It is isomorphic to $\Gr(3, 10) \times \mathbb{A}^4$.
            This component was throughly analysed in~\cite{CEVV}.
        \end{example}

        \begin{example}\label{ex:examplethree}
            \def\hvec{\mathbf{h}}%
            \def\familygraded{\mathcal{F}}%
            \def\Vnot{V_0}%
            \def\multigraded{\OHilb^{\hvec}(\mathbb{A}^4)}%
            \def\compozero{\mathcal{Z}^{\Gmult}}%
            \def\formsres{\mathcal{F}^{\mathrm{res}}}%
            \def\forms{\mathcal{F}}%
            In contrast with $\compo(2)$, the component $\compo(3)$ was not known
            before. It is more complicated than $\compo(2)$ and we do not know
            if it is rational.

            The Hilbert function of $\OO_{\DR(3)}$ is $\hvec = (1, 4, 10, 12,
            8)$ and
            the Hilbert series of $\Hom(I_{\DR(3)}, \OO_{\DR(3)})$ is
            $4T^{-1} + 56 + 64T$ so the component $\compo$ is a rank $68$ fiber bundle
            over $\compozero$ and $\dim \compozero = 56$.
            Let $\multigraded$ be the multi-graded Hilbert scheme~\cite{Haiman_Sturmfels__multigraded}, parameterizing graded
            subschemes with Hilbert function $\hvec$.
            The scheme $\multigraded$ is naturally identified with
            \[
                \formsres := \left\{ (I_3, I_4) \in \Gr(8, \DS_3) \times \Gr(27, \DS_4)\ | \
                \DS_1 \cdot I_3 \subset I_4\right\}.
            \]
            Projection to first coordinate maps $\formsres$ onto a
            determinantal scheme
            \[
                \forms := \left\{I_3\in \Gr(8, \DS_3)\ |\ \dim_{\kk}(I_3\cdot\DS_1) \leq 27
                \right\} \subset \Gr(8, \DS_3),
            \]
            which is given by $28 \times 28$ minors, thus
            its dimension at every point is at
            least $8\cdot 12 - (35-27)\cdot (4\cdot 8 - 27) = 56$.  By
            comparing bounds, we see that equality occurs near $[\DR(3)]$, $56
            = \dim_{[\DR(3)]} \formsres = \dim_{[\DR(3)]}\multigraded$ and so
            $\compozero$ is equal to the unique component of $\formsres$ passing
            through $[\DR(3)]$.
            Since $\forms$ is determinantal, it has an embedding into the
            generic determinantal variety $D$.
            The map $\formsres \to \forms$ is the pullback of the resolution of
            singularities of $D$ via the embedding $\forms \subset D$, see~\cite[Section
            II.2]{Arbarello__curves_vol1}.
        \end{example}

        \begin{example}\label{ex:groebnerfans}
            In this example we construct a one-parameter family of finite subschemes
            $\DR_t$ of $\mathbb{A}^4$. The schemes $\DR_t$ have degree $35$
            and satisfy the following conditions
            \begin{enumerate}
                \item the Gr\"obner fans of all schemes $\DR_t$ are equal,
                \item for $t\neq 0$ the point $[\DR_t]\in
                    \Hlong{35}{\mathbb{A}^4}$ is smooth lying on an elementary component $\compo(3)$,
                \item the point $[\DR_0]$ lies in the intersection of two
                    components of $\Hlong{35}{\mathbb{A}^4}$ and it is
                    cleavable. In other words, $\DR_0$ is a limit of reducible
                    subschemes.
            \end{enumerate}
            Let $\kk = \mathbb{F}_3$. Put $s_t = {x}_{1}^{2}
            {x}_{3}^{2}-{x}_{2}^{2} {x}_{3}^{2}+t{x}_{1} {x}_{2} {x}_{3}
            {x}_{4}-{x}_{1}^{2} {x}_{4}^{2}-{x}_{2}^{2} {x}_{4}^{2}$ and $\DR_t
            = (x_1, x_2)^3 + (x_3, x_4)^3 + s_t$. The assertions on $[\DR_t]$
            for $t\neq 0$ follow immediately from
            Theorem~\ref{ref:maintheoremexamples:thm} as the form $s_t$
            corresponds to a matrix
            \[
                \begin{bmatrix}
                    1 & 0 & -1\\
                    0 & t & 0\\
                    -1 & 0 & -1\\
                \end{bmatrix},
            \]
            which is invertible for $t\neq 0$. Also the equality of Gr\"obner
            fans is clear. The point $[\DR_0]$ lies
            on $\compo(3)$. The family
            \[
                \Spec \frac{\kk[x_1, x_2, x_3, x_4][z]}{(x_3, x_4)^3 + (x_1x_2^2,
                  x_1x_2\cdot z-x_1^2x_2, x_1^2\cdot z-x_1^3,
                    (x_1x_3^2-x_1x_4^2)\cdot z-s_0)}\to \Spec \kk[z],
            \]
        is finite and flat. Its fiber over $z = 0$ is $\DR_0$, and its fiber over $z = 1$
        is reducible and so the image of $\Spec \kk[z]\to
        \Hlong{35}{\mathbb{A}^4}$ is contained in an irreducible component
        other that $\compo(3)$ and $[\DR_0]$
        lies in the intersection of those two components.
        \end{example}

        \begin{example}\label{ex:naiveexample}
            Suppose we know the Betti table of a finite subscheme which has trivial
            negative tangents.
            In this example we show how may sometimes deduce that this
            subscheme is a smooth point of the Hilbert scheme.
            Let $\DS = \kk[x,y,z,t]$.
            Let $\DR \subset \mathbb{A}^4 = \Spec \DS$ be given by $\IR = (x^3, y^3, z^3,
            t^3, Q_1, Q_2)$, where $Q_1$, $Q_2$ are quartics.
            For $\kk = \mathbb{F}_2$ and $Q_1 = x y^{2} z+y^{2}
                  z^{2}+x^{2} y t$, $Q_2 = y z^{2} t+x z t^{2}$
                  we have $H_{\OO_R}(T) = 1+4 T+10 T^{2}+16 T^{3}+17 T^{4}+8
                  T^{5}$,
            $\deg R = 56$ and the
            following Hilbert
            series
            \begin{equation}\label{eq:extsexplicit}
                H_{\Hom(\IR, \OR)}(T) = 4 T^{-1}+98+84 T+32 T^{2}.
            \end{equation}
            The resolution of $\OR$ is
            \begin{equation}\label{eq:resolution}
                \begin{tikzcd}
                    \DS & \arrow[l] \DS(-3)^{4} \oplus
                    \DS(-4)^{2} & \arrow[l] \DS(-6)^{16} \oplus
                    \DS(-7)^{4} & \arrow[l]  \ldots
                \end{tikzcd}
            \end{equation}
            Comparing~\eqref{eq:resolution} with $H_{\OO_R}$, we see that
            $\Ext^{1}(\IR, \OR)_{\geq 0} = 0$ by degree reasons.
            Since $\Ext^1$ has no non-negative part and $R$ has only trivial
            negative tangents,
            Corollary~\ref{ref:elementary:cor} implies that $[R]$ is a
            smooth point of an elementary component $\compo$ of dimension $\dim
            \Hom(\IR, \OR) = 218$. Since $\deg R = 56$ and $56\cdot 4 > 218$,
            the component $\compo$ is small and elementary.
            \emph{Macaulay2}~\cite{M2} experiments
            suggest that~\eqref{eq:extsexplicit} is true for a general choice of
            $Q_1$, $Q_2$.
            We stress that we deduced $\Ext^1_{\geq 0} = 0$
            only from the graded Betti numbers of $\OR$.
            The group $\Ext^1$
            is non-zero in negative degrees, with Hilbert function
            \[
                H_{\Ext^1(\IR, \OR)}(T) = 60 T^{-3}+204 T^{-2}+60 T^{-1}.
            \]
        \end{example}

        \begin{remark}\label{ref:previouswork:rmk}
            Below we discuss examples of elementary components known before this
            work. To the author's best knowledge, these are all the examples found
            in the literature. Trivially, $\Hlong{1}{\mathbb{A}^n} =
            \mathbb{A}^n$ is elementary.
            The first two nontrivial examples, with Hilbert function $(1, 4, 3)$ and $(1, 6, 6, 1)$
            respectively, are given
            in~\cite{emsalem_iarrobino_small_tangent_space}.
            For any integers $(d, e)$ with
            \[
                3\leq e \leq \frac{(d-1)(d-2)}{6}+2
            \]
            an elementary component with Hilbert function $(1, d, e)$
            is given
            in~\cite{Shafarevich_Deformations_of_1de}. Five examples,
            with Hilbert function $(1, n, n, 1)$, $8\leq n\leq 12$, are given
            in~\cite[Lemma~6.21]{iarrobino_kanev_book_Gorenstein_algebras}.
            Two other examples are given
            in~\cite[Corollaries~6.28,
            6.29]{iarrobino_kanev_book_Gorenstein_algebras}.
            Five
            examples are discovered in~\cite{HuibregtseElementary}.
            Only six
            of all these elementary components are small: those with Hilbert
            function $(1, 4, 3)$, $(1, 5, 3)$, $(1, 5, 4)$, $(1, 6, 6, 1)$ and
            two examples $(1, 5, 3, 4)$, $(1, 5, 3, 4, 5, 6)$
            from~\cite{HuibregtseElementary}. The elementarity of all these examples
            except the last two, which crucially employ non-graded schemes, can be proved using
            Corollary~\ref{ref:elementaryintro:cor} and the obtained
            components are products of $\mathbb{A}^n$ with Grassmannians.
            Therefore, they are smooth and rational.

            Both
            Iarrobino-Kanev~\cite[Conjecture~6.30]{iarrobino_kanev_book_Gorenstein_algebras},
            relying on~\cite{emsalem_iarrobino_small_tangent_space},
            and Huibregtse~\cite[Conjecture~1.4]{HuibregtseElementary}
            state conjectures which would give other infinite families of
            elementary components. To prove these conjectures it is, in each
            case, enough to check that the tangent space to the Hilbert scheme
            at a given point has expected dimension.
        \end{remark}

        \vspace*{-\baselineskip}
        \small
\newcommand{\etalchar}[1]{$^{#1}$}

    \end{document}